\numberwithin{equation}{section}
\date{March 6, 2020}
\newtheorem{lemma}{Lemma}[section]
\newtheorem{remark}{Remark}[section]
\newaliascnt{proposition}{lemma}
\newtheorem{proposition}[proposition]{Proposition}
\newaliascnt{corollary}{lemma}
\newtheorem{corollary}[corollary]{Corollary}
\newaliascnt{theorem}{lemma}
\newtheorem{theorem}[theorem]{Theorem}
\newaliascnt{definition}{lemma}
\newaliascnt{assumption}{lemma}
\newtheorem{assumption}[assumption]{Assumption}
\theoremstyle{nonumberplain}
\newtheorem{proof}{Proof}
\titleformat{\section}[block]{\large\sc\filcenter}{\thesection.}{0.5ex}{}[]
\titleformat{\subsection}[runin]{\bf}{\thesubsection.}{0.5ex}{}[.]
\newcommand{\N}{\mathbb{N}}
\let\RE\Re
\let\Re=\undefined
\DeclareMathOperator{\Re}{\RE e}
\let\IM\Im
\let\Im=\undefined
\DeclareMathOperator{\Im}{\IM m}
\newcommand{\norm}[1]{\left\|#1\right\|}
\newcommand{\inner}[2]{\left<#1,#2\right>}
\newcommand{\e}{\mathrm e}
\newcommand{\yd}{y^\delta}
\newcommand{\uk}{u_{k}}
\newcommand{\uks}{u_{k_{*}}}
\newcommand{\ukp}{u_{k+1}}
\newcommand{\udag}{u^\dagger}
\begin{document}

\title{A data-driven iteratively regularized Landweber iteration}
\author[2]{A. Aspri}
\author[3]{S. Banert}
\author[3]{O. Öktem}
\author[1,2]{O. Scherzer}
\affil[1]{{\small Computational Science Center, University of Vienna, \texttt{otmar.scherzer@univie.ac.at}}}	
\affil[2]{{\small Johann Radon Institute for Computational and Applied Mathematics (RICAM), Linz, \texttt{ andrea.aspri@ricam.oeaw.ac.at}}}
\affil[3]{{\small KTH Royal Institute of Technology, Stockholm, \texttt{\{banert,ozan\}@kth.se}}}
\maketitle

\begin{abstract}
We derive and analyse a new variant of the iteratively regularized Landweber iteration, for solving linear and nonlinear 
ill-posed inverse problems. The method takes into account training 
data, which are used to estimate the interior of a black box, which is used to define the iteration process.  
We prove convergence and stability for the scheme in infinite dimensional Hilbert spaces. 
These theoretical results are complemented by some numerical experiments for solving linear inverse problems for the Radon 
transform and a nonlinear inverse problem for Schlieren tomography.
\end{abstract}
\vskip.15cm

{\sf Keywords.} Iteratively regularized Landweber iteration, expert and data driven regularization, black box strategy. 
\vskip.15cm

{\sf 2010 AMS subject classifications.}
65J20  (65J10, 65J15, 65J22)

\section{Introduction}
This paper is concerned with a generalization of the \emph{iteratively regularized Landweber iteration}, as introduced in \cite{Sch98}, 
for solving (linear and nonlinear) ill--posed operator equations
\begin{equation} \label{eq:operator}
F(u) = y.
\end{equation}
In the course of this paper, for the sake of simplicity of presentation, we restrict our attention to an operator 
$F:\mathcal{D}(F) \subset X \to Y$ between real separable Hilbert spaces $X$ and $Y$ with inner products $\langle\cdot,\cdot\rangle$ 
and norms $\|\cdot\|$, respectively. 
We denote with $\yd$ noisy data and we assume that 
\begin{equation*}
\norm{\yd-y}\leq \delta.
\end{equation*} 
Generalizations to the Banach space setting (see \cite{SchuKalHofKaz12}) are formally similar, but technically more 
complicated, and thus omitted here.

In the original form, the iteratively regularized Landweber iteration, as considered in \cite{Sch98} 
(see also \cite{KalNeuSch08}), consists in computing the following iterative updates
\begin{equation}\label{eq:ir_landweber_iter}		
\ukp := \uk - F'(\uk)^*\bigl( F(\uk)-\yd \bigr) - \lambda_k(\uk - u^{(0)}),\qquad k\in\mathbb{N},
\end{equation} 
where $u^{(0)}$ is an initial guess which incorporates \emph{a-priori knowledge} on the solution to be recovered. 
In presence of noise in the data (that is the available data are $\yd$), to guarantee that 
the iterative scheme \eqref{eq:ir_landweber_iter} is a regularization procedure, it has to be complemented with 
a stopping rule. The discrepancy principle is often employed, i.e., the iteration is stopped 
after the first $k_*=k_*(\delta,\yd)$ steps for which:
\begin{equation*}
\norm{F(u_{k_*})-\yd}\leq \tau\delta < \norm{F(\uk)-\yd}, \qquad 0\leq k< k_*, 
\end{equation*}  
for some $\tau>1$. 
Method \eqref{eq:ir_landweber_iter} can be considered a modification of the Landweber iteration, i.e, 
\begin{equation}\label{eq: landweber_iter}		
\ukp := \uk - F'(\uk)^*\bigl( F(\uk)-\yd \bigr),\qquad k\in\mathbb{N},
\end{equation} 
when we put $\lambda_k=0$ for all $k \in \N$.
However, opposed to the Landweber iteration \eqref{eq: landweber_iter}, the convergence rates 
analysis of \eqref{eq:ir_landweber_iter} is indeed simpler, and requires less restrictive conditions on the 
operator $F$ (see \cite{KalNeuSch08}). 
The reason for that is the presence of the damping term $\lambda_k(\uk - u^{(0)})$ which stabilizes the algorithm: 
The modified Landweber iteration converges to a solution which is closest to $u^{(0)}$, which is not guaranteed 
for the Landweber iteration itself without posing additional assumptions. However, the practical convergence rates 
are even slower than for the Landweber iteration. 
The damping term $\lambda_k(\uk-u^{(0)})$  was originally introduced as an additional stabilizing term in the
Gauß-Newton's method in \cite{Bak92}. Later, it has been observed that it has a similar effect in all the iterative 
regularization methods when used as an add-on factor.
This observation is the motivation for this paper to introduce a data driven damping factor in the Landweber iteration. 
We first note that the iteration \eqref{eq:ir_landweber_iter} can be rewritten in the form
\begin{equation}\label{eq:ir_landweber_iter_opt} 
\ukp := \uk - \tfrac{1}{2}\partial_u \left( \norm{F(u)-\yd}^2 + \lambda_k \norm{u-u^{(0)}}^2 \right)(u_k),\qquad k\in\mathbb{N}.
\end{equation}
Our objective is to include a damping factor gained from \emph{expert data} $(u^{(i)},F(u^{(i)}))_{1 \leq i \leq n}$ 
in the iteration process. As a first attempt, one could generalize \eqref{eq:ir_landweber_iter_opt} by 
ignoring the image data $(F(u^{(i)})_{1 \leq i \leq n})$, which leads to 
\begin{equation} \label{eq:ir_landweber_iter_opt_II} 
\ukp := \uk - \tfrac{1}{2}\partial_u \left( \norm{F(u)-\yd}^2 + \lambda_k \sum_{i=1}^n \norm{u-u^{(i)}}^2 \right)(u_k),\qquad k\in\mathbb{N}.
\end{equation}
From an analytical point of view, this does not offer significant benefits because we expect that in general it will converge to 
the solution which is closest to the mean $\hat{u}=\sum_{i=1}^n u^{(i)}$. 
Therefore, in order to include the image data $(F(u^{(i)})_{1 \leq i \leq n})$ as prior information as well, we follow a \emph{black box strategy}. To be more specific, we identify an operator $\hat{A}$, which maps each $u^{(i)}$ to $F(u^{(i)})$, $i=1,\ldots,n$, and vice versa, to include as a damping term in the Landweber iteration \eqref{eq: landweber_iter}, i.e.,
\begin{equation}\label{SDschemeNew}
\ukp := \uk - \tfrac{1}{2}\partial_u \left( \norm{F(u)-\yd}^2 + {\lambda^{\delta}_k} \norm{\hat{A}(u)-\yd}^2\right)(u_k), \qquad k\in\N,
\end{equation}
which, in explicit form, is equal to  
\begin{equation}\label{SDschemeNew_II}
\ukp := \uk - F'(\uk)^*\bigl( F(\uk)-\yd \bigr) - {\lambda^{\delta}_k} \hat{A}'(\uk) ^*\bigl(\hat{A}(\uk) -\yd \bigr), \qquad k\in\N.
\end{equation}
The objective of the second term in \eqref{SDschemeNew_II} is to give some bias for the expert data.
We mention that \emph{system identification} and \emph{black box} theory was considered extensively in the sixties of the last century, see 
for instance \cite{Pap62}. 
Our main results concern the proof of strong convergence and stability for scheme \eqref{SDschemeNew_II}, essentially under the assumptions 
that $F$ satisfies the usual tangential cone condition, see \eqref{eq:tcc}, and both $F$ and $\hat{A}$ are Fr\'echet differentiable. 
These theoretical results are followed by some numerical experiments for linear and nonlinear operator equations \eqref{eq:operator}.
To be more precise, we take the Radon (see \cite{Kuc14}) and the Schlieren (see \cite{SchGraGroHalLen09}) operators  as models of linear 
and nonlinear problems, respectively. 
Concerning the operator $\hat{A}$, we do not tackle the problem in the full generality in the sense that 
all the numerical experiments are realized by restricting attention to bounded linear operators between separable Hilbert spaces $X$ and $Y$, that 
is, taking $\tilde{A}\in\mathcal{B}(X,Y)$, we consider 
\begin{equation}\label{SDschemeNew_III}
\ukp := \uk - F'(\uk)^*\bigl( F(\uk)-\yd \bigr) - {\lambda^{\delta}_k} \tilde{A}^*\bigl(\tilde{A}\uk-\yd \bigr), \qquad k\in\N.
\end{equation}
We reserve to point the general case of nonlinear operators for future work. 
Specifically, given $n$ pairs of input-output relation $(u^{(i)},F(u^{(i)}))$, and defining the functional 
\begin{equation*}
l(\tilde{T}):=\frac{1}{2} \sum_{i=1}^n \norm{\tilde{T} u^{(i)}-y^{(i)}}^2_{Y},
\end{equation*}
where $\tilde{T}\in B(X,Y)$ and {$y^{(i)}=F(u^{(i)})$}, we define $\tilde{A}:X \to Y$ as the bounded linear operator which satisfies
\begin{equation}\label{eq: min_A_intro}
l(\tilde{A})=\underset{\tilde{T}\in\mathcal{B}(X,Y)}{\min} l(\tilde{T})=\underset{\tilde{T}\in\mathcal{B}(X,Y)}{\min} \left[\frac{1}{2} \sum_{i=1}^n \norm{\tilde{T} u^{(i)}-y^{(i)}}^2_{Y}\right].
\end{equation}	
The operator $\tilde{A}$ can be {projected onto a finite dimensional subspace and here represented by} a matrix $A\in\mathbb{R}^{M\times N}$, {where $M, N\in\mathbb{N}$ will be defined in Section \ref{sec: num_exp}}. In fact, from classical results of functional analysis, every bounded linear operator has a matrix representation of infinite dimensions, through complete orthonormal bases in $X$ and $Y$, see for example \cite{Kat95}. 
We refer the reader to Section \ref{sec: num_exp} for a more in-depth discussion.
In all the numerical experiments we compare the outcomes of the {data driven} iteratively regularized Landweber scheme \eqref{SDschemeNew_III} with {the classic iteratively regularized Landweber scheme \eqref{eq:ir_landweber_iter} and} the 
Landweber iteration \eqref{eq: landweber_iter}.
This is a first attempt to include expert data in iterative reconstruction algorithms. Nonlinear learning strategies might behave 
significantly better. However, for the solution of linear ill--posed operator equations, the numerical reconstructions of 
\eqref{SDschemeNew_III} are significantly better than the results of Landweber iterations \eqref{eq:ir_landweber_iter} and \eqref{eq: landweber_iter}. 
In the case of nonlinear operator equations this approach is not as well suited, because we attempt to simulate 
a nonlinear input-output relation by a linear operator $\tilde{A}$. This is a challenging topic to investigate for future research.

The outline of the paper is as follows: In Section \ref{sec:LI} we analyze \eqref{SDschemeNew_II} in an infinite dimensional Hilbert 
space setting and prove strong convergence and stability. In Section \ref{sec: num_exp} we discuss the numerical implementation of \eqref{SDschemeNew_III} and 
study applications to the Radon inversion, for the linear case, and to the Schlieren tomography, for the nonlinear case, with full and limited data. In all the numerical examples we compare the outcomes of \eqref{eq:ir_landweber_iter}, \eqref{eq: landweber_iter} and \eqref{SDschemeNew_III}.

 \section{Iteratively Regularized Landweber Iteration}\label{sec:LI}
In this section we analyze the convergence of the iteratively regularized Landweber iteration, presented in the introduction, which we recall here for the reader's sake
\begin{equation}\label{eq:SDscheme}
\begin{aligned}
\ukp &:= \uk - F'(\uk)^*\bigl( F(\uk)-\yd \bigr) - {\lambda^{\delta}_k} \hat{A}'(\uk) ^*\bigl(\hat{A}(\uk) -\yd \bigr), \qquad k\in\N,\\
u_0 &- \textrm{initial guess},
\end{aligned}
\end{equation}
where $\yd$ are noisy data such that 
\begin{equation*}
\norm{\yd-y}\leq \delta,
\end{equation*}
and ${\lambda^{\delta}_k}$ is a suitable parameter.	
Moreover, we assume that \eqref{eq:operator} has a solution $\udag$, possibly not unique.

For the analysis, we basically follow the approach proposed in \cite[Chapters 1, 3]{KalNeuSch08} because such iterates can be considered a modified version of the Landweber scheme.\\
It is well-known that for iterative schemes and in case of noisy data, a stopping criterion has to be added to the scheme \eqref{eq:SDscheme} in order to obtain a regularization method. For that, we employ the discrepancy principle for which the iteration is stopped after $k_*=k_*(\delta,\yd)$ steps once 
\begin{equation}\label{eq: discrep_princ}
\norm{F(\uks)-\yd}\leq \tau\delta\leq \norm{F(\uk)-\yd},\qquad 0\leq k< k_*,
\end{equation}  
where $\tau$ is a positive number chosen suitably.	

First, we introduce some assumptions on the operators $F$ and $\hat{A}$ to guarantee local convergence of the iterative schemes as \eqref{eq:SDscheme}, and we define some constants which will appear in the convergence theorem. 
\begin{assumption}\label{as:fa}
	Let $\mathcal{B}_\rho(\udag) \subseteq \mathcal{B}_{2\rho}(u_0)$, where $\mathcal{B}_\rho(\udag)$ denotes a closed ball of radius $\rho$ and center $\udag$.
	We assume that
	\begin{enumerate}
		\item $F$ satisfies the \emph{tangential cone condition}, that is 
		\begin{equation}\label{eq:tcc}
		\norm{F(u) - F(v) - F'(u)(u-v)} \leq \nu \norm{F(u) - F(v)}, \qquad         \forall u,v \in \mathcal{B}_{\rho}(\udag),
		\end{equation}
		where $\nu>0$.		 	
		\item $F$ and $\hat{A}$ have continuous Fr\'echet derivative $F'$ and $\hat{A}'$, respectively, with Lipschitz constants $L_F$ and $L_{\hat{A}}$, i.e., for all $u \in \mathcal{B}_\rho(\udag)$ it holds
		\begin{subequations}\label{eq:lc}  	
			\begin{equation}\label{eq:lcF}
			\norm{F'(u)} \leq L_F,
			\end{equation}     
			\begin{equation}\label{eq:lcA}
			\norm{\hat{A}'(u)} \leq L_{\hat{A}}.  
			\end{equation}
		\end{subequations}
		\item	We assume that data-driven model $\hat{A}$ cannot fully explain the model for the true data, hence
		\begin{equation}\label{eq:non-attain}
		\norm{\hat{A}(\udag)-y} \geq C_N \text{ with } C_N > 0\;.
		\end{equation}
	\end{enumerate}
\end{assumption}
Here, we present an auxiliary estimate which will be needed to prove a monotonicity property and some convergence results of \eqref{eq:SDscheme}.
\begin{lemma} \label{le:aux} Let Assumption \ref{as:fa} be satisfied. 
	Then there exists a positive constant $C^{\delta}_{\hat{A}}$ such that
	\begin{equation} \label{eq:A_stab}
	\norm{\hat{A}(\uk)-\yd} \leq  C^{\delta}_{\hat{A}}\;
	\end{equation}
	holds true for every $u_k\in \mathcal{B}_\rho(\udag)$.		
\end{lemma}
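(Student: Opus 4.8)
The plan is to bound $\norm{\hat{A}(\uk)-\yd}$ uniformly in $k$ by splitting it through the triangle inequality into three pieces, each controlled by a fixed quantity. Concretely, I would insert $\hat{A}(\udag)$ and $y$ and write
\[
\norm{\hat{A}(\uk)-\yd} \leq \norm{\hat{A}(\uk)-\hat{A}(\udag)} + \norm{\hat{A}(\udag)-y} + \norm{y-\yd},
\]
and then estimate the three terms separately.

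The first term is where the regularity assumption enters. By \eqref{eq:lcA}, the operator norm of the Fr\'echet derivative $\hat{A}'$ is bounded by $L_{\hat{A}}$ on $\mathcal{B}_\rho(\udag)$. Since this ball is convex, the segment joining $\udag$ and $\uk$ lies entirely inside it, so the mean value inequality (the fundamental theorem of calculus for the Fr\'echet derivative) gives
\[
\norm{\hat{A}(\uk)-\hat{A}(\udag)} = \norm{\int_0^1 \hat{A}'\bigl(\udag + t(\uk-\udag)\bigr)(\uk-\udag)\d t} \leq L_{\hat{A}}\norm{\uk-\udag} \leq L_{\hat{A}}\,\rho,
\]
the last step using the hypothesis $\uk \in \mathcal{B}_\rho(\udag)$. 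The remaining two terms are immediate: $\norm{\hat{A}(\udag)-y}$ is a fixed finite number, say $C_0$, because $\udag$, $\hat{A}$ and $y$ are fixed and $\hat{A}$ is continuous, and $\norm{y-\yd}\leq \delta$ by the noise bound. Collecting the estimates yields
\[
\norm{\hat{A}(\uk)-\yd} \leq L_{\hat{A}}\rho + C_0 + \delta =: C^\delta_{\hat{A}},
\]
a constant independent of the iteration index $k$, as claimed.

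I do not anticipate a genuine obstacle, since this is an auxiliary uniform bound rather than a sharp estimate. The only point needing a little care is verifying that the integration path used in the mean value inequality stays inside $\mathcal{B}_\rho(\udag)$, so that \eqref{eq:lcA} is legitimately applicable; this is guaranteed by convexity of the ball. It is worth noting that the non-attainability condition \eqref{eq:non-attain} is not needed here, being relevant only for the later convergence arguments.
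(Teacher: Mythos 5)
Your proof is correct and follows essentially the same route as the paper: a triangle-inequality splitting around $\hat{A}(\udag)$ combined with the mean value inequality $\norm{\hat{A}(\uk)-\hat{A}(\udag)} \leq L_{\hat{A}}\rho$ obtained from \eqref{eq:lcA} and the integral representation of the Fr\'echet derivative. The only cosmetic difference is that the paper bounds the remainder by $\norm{\hat{A}(\udag)} + \norm{\yd}$ while you insert $y$ and use the noise bound to get $\norm{\hat{A}(\udag)-y} + \delta$; both yield a constant independent of $k$, and your observation that \eqref{eq:non-attain} is not needed here is accurate.
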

\begin{proof} 
	Equation \eqref{eq:A_stab} follows from the triangle inequality and the Lipschitz-continuity of $\hat{A}'$, in fact
	\begin{equation*}
	\begin{aligned}
	\norm{\hat{A}(\uk)-\yd} \leq & \norm{\hat{A}(\uk)-\hat{A}(\udag)} + \norm{\hat{A}(\udag)} + \norm{\yd}\\
	= & \norm{\int_0^1 \hat{A}'(\udag + t(\uk-\udag))(\uk-\udag)\,dt} + \norm{\hat{A}(\udag)} + \norm{\yd}\\
	\leq & L_{\hat{A}}\rho + \norm{\hat{A}(\udag)}+\norm{\yd} =: C^{\delta}_{\hat{A}}\;.
	\end{aligned}
	\end{equation*}
\end{proof}

We first prove that, under some suitable assumptions on ${\lambda^{\delta}_k}$, a monotonicity property is verified for the scheme \eqref{eq:SDscheme}.
\begin{proposition} \label{le:mon} 
	Let Assumption \ref{as:fa} be satisfied. Assume that 
	$\uk \in \mathcal{B}_\rho(\udag)$ and
	\begin{equation}\label{eq:disc}
	\norm{F(\uk)-\yd}  \geq \tau \delta, 
	\end{equation}
	where $\tau >0$ and it is such that
	\begin{equation}\label{eq:disc_ca}
	C_\tau := 1 - L_F^2 -\nu - \frac{1+\nu}{\tau} \geq 0\;.
	\end{equation}
	Moreover, let us assume that there exists a positive constant $C^{\delta}_\lambda$ such that the following conditions hold:
	\begin{subequations}\label{eq:lambda_ca}
		\begin{equation}\label{eq:lambda_ca1}
		{\lambda^{\delta}_k}L_{\hat{A}} C^{\delta}_{\hat{A}}\leq \rho,
		\end{equation}
		\begin{equation}\label{eq:lambda_ca2}
		{\lambda^{\delta}_k} \leq   C^{\delta}_\lambda \norm{F(\uk)-\yd}^2,
		\end{equation}
	\end{subequations}
	\begin{equation}\label{eq:c_tau}
	C_\tau - 2L_{\hat{A}} C^{\delta}_{\hat{A}} C^{\delta}_\lambda \rho> 0.
	\end{equation}
	Then $\ukp \in \mathcal{B}_{\rho}(\udag)$ and $\norm{\ukp-\udag}\leq\norm{\uk-\udag}$.  
\end{proposition}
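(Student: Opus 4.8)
The plan is to estimate the change in distance to the solution through the identity
\[
\norm{\ukp-\udag}^2-\norm{\uk-\udag}^2 = 2\inner{\ukp-\uk}{\uk-\udag}+\norm{\ukp-\uk}^2 ,
\]
substitute the update \eqref{eq:SDscheme} in the form $\ukp-\uk=-(T_1+T_2)$ with $T_1:=F'(\uk)^*(F(\uk)-\yd)$ and $T_2:=\lambda_k\hat{A}'(\uk)^*(\hat{A}(\uk)-\yd)$, and show that the right-hand side is bounded above by $-2\bigl(C_\tau-2L_{\hat{A}}C^{\delta}_{\hat{A}}C^{\delta}_\lambda\rho\bigr)\norm{F(\uk)-\yd}^2$, which is non-positive by \eqref{eq:c_tau}. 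Monotonicity then follows at once, and since $\uk\in\mathcal{B}_\rho(\udag)$ gives $\norm{\ukp-\udag}\le\norm{\uk-\udag}\le\rho$, so does the inclusion $\ukp\in\mathcal{B}_\rho(\udag)$. The decisive preparatory move is to expand $\norm{\ukp-\uk}^2=\norm{T_1+T_2}^2$ and absorb the mixed term by Young's inequality, $2\inner{T_1}{T_2}\le\norm{T_1}^2+\norm{T_2}^2$, so that everything reduces to the four terms $-2\inner{T_1}{\uk-\udag}+2\norm{T_1}^2$ and $-2\inner{T_2}{\uk-\udag}+2\norm{T_2}^2$, treated separately below.

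For the fidelity terms I would rewrite $\inner{T_1}{\uk-\udag}=\inner{F'(\uk)(\uk-\udag)}{F(\uk)-\yd}$ and use the tangential cone condition \eqref{eq:tcc}. Because $\udag$ solves $F(\udag)=y$, \eqref{eq:tcc} replaces $F'(\uk)(\uk-\udag)$ by $F(\uk)-y$ up to a remainder of norm at most $\nu\norm{F(\uk)-y}$; splitting $F(\uk)-y=(F(\uk)-\yd)+(\yd-y)$, applying Cauchy--Schwarz with $\norm{\yd-y}\le\delta$, and eliminating $\delta$ via the discrepancy hypothesis \eqref{eq:disc} in the form $\delta\le\tau^{-1}\norm{F(\uk)-\yd}$ yields the lower bound $\inner{F'(\uk)(\uk-\udag)}{F(\uk)-\yd}\ge\bigl(1-\nu-\tfrac{1+\nu}{\tau}\bigr)\norm{F(\uk)-\yd}^2$. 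Combined with $2\norm{T_1}^2\le2L_F^2\norm{F(\uk)-\yd}^2$ from \eqref{eq:lcF}, the two fidelity terms are bounded above by exactly $-2C_\tau\norm{F(\uk)-\yd}^2$, with $C_\tau$ the constant \eqref{eq:disc_ca}; note how the factor $2$ coming from Young's inequality is precisely what pairs the $L_F^2$ with the remaining coefficients.

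For the damping terms I would estimate $T_2$ by geometric quantities only. Cauchy--Schwarz, the Lipschitz bound \eqref{eq:lcA}, \autoref{le:aux} (so that $\norm{\hat{A}(\uk)-\yd}\le C^{\delta}_{\hat{A}}$) and $\norm{\uk-\udag}\le\rho$ give $-2\inner{T_2}{\uk-\udag}\le2\lambda_kL_{\hat{A}}C^{\delta}_{\hat{A}}\rho$, and using \eqref{eq:lambda_ca2} this becomes $\le2L_{\hat{A}}C^{\delta}_{\hat{A}}C^{\delta}_\lambda\rho\norm{F(\uk)-\yd}^2$. For the quadratic term I would bound one factor of $\norm{T_2}\le\lambda_kL_{\hat{A}}C^{\delta}_{\hat{A}}$ by $\rho$ through \eqref{eq:lambda_ca1} and the other through \eqref{eq:lambda_ca2}, giving $2\norm{T_2}^2\le2\rho\,\lambda_kL_{\hat{A}}C^{\delta}_{\hat{A}}\le2L_{\hat{A}}C^{\delta}_{\hat{A}}C^{\delta}_\lambda\rho\norm{F(\uk)-\yd}^2$. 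Adding the two, the damping contributes at most $4L_{\hat{A}}C^{\delta}_{\hat{A}}C^{\delta}_\lambda\rho\norm{F(\uk)-\yd}^2$, which is exactly twice the term $L_{\hat{A}}C^{\delta}_{\hat{A}}C^{\delta}_\lambda\rho$ subtracted in \eqref{eq:c_tau}.

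The main obstacle is the cross term $2\inner{T_1}{T_2}$ hidden in $\norm{\ukp-\uk}^2$: treated directly it carries either a non-vanishing factor $\rho$ or a power $\norm{F(\uk)-\yd}^3$, neither of which is dominated by $\norm{F(\uk)-\yd}^2$. The resolution is the weight-one Young step above, after which each surviving square is controlled by a different mechanism---$\norm{T_1}^2$ by \eqref{eq:tcc} and \eqref{eq:lcF}, and $\norm{T_2}^2$ by the tandem use of \eqref{eq:lambda_ca1} (to trade one factor for $\rho$) and \eqref{eq:lambda_ca2} (to supply the factor $\norm{F(\uk)-\yd}^2$). Collecting the fidelity and damping estimates gives $\norm{\ukp-\udag}^2-\norm{\uk-\udag}^2\le-2\bigl(C_\tau-2L_{\hat{A}}C^{\delta}_{\hat{A}}C^{\delta}_\lambda\rho\bigr)\norm{F(\uk)-\yd}^2\le0$ by \eqref{eq:c_tau}, which is the assertion; the non-attainability condition \eqref{eq:non-attain} is not needed here and enters only the subsequent convergence analysis.
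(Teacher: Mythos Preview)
Your proposal is correct and follows essentially the same route as the paper: the paper also starts from the identity $\norm{\ukp-\udag}^2-\norm{\uk-\udag}^2=2\inner{\ukp-\uk}{\uk-\udag}+\norm{\ukp-\uk}^2$, absorbs the cross term in $\norm{T_1+T_2}^2$ via $\norm{T_1+T_2}^2\le 2\norm{T_1}^2+2\norm{T_2}^2$, and then bounds the $F$-part by $-C_\tau\norm{F(\uk)-\yd}^2$ and the $\hat{A}$-part by $2L_{\hat{A}}C^{\delta}_{\hat{A}}C^{\delta}_\lambda\rho\norm{F(\uk)-\yd}^2$ using exactly the ingredients you list (\eqref{eq:tcc}, \eqref{eq:lcF}, \eqref{eq:disc}, \autoref{le:aux}, \eqref{eq:lambda_ca1}, \eqref{eq:lambda_ca2}). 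Your observation that \eqref{eq:non-attain} is not used in this step is also in line with the paper's proof.
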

\begin{proof}
	Let us assume that $\uk \in \mathcal{B}_\rho(\udag)$, then we have 
	\begin{equation}	\label{eq:mono_h}
	\begin{aligned}
	\norm{\ukp-\udag}^2 &- \norm{\uk-\udag}^2 \\
	\leq & 2\Big(- \inner{F(\uk)-\yd}{F'(\uk)(\uk-\udag)} +  \norm{F'(\uk)^*(F(\uk)-\yd)}^2\\
	& - {\lambda^{\delta}_k} \inner{\hat{A}(\uk)-\yd}{\hat{A}'(\uk)(\uk-\udag)} +  {(\lambda^{\delta}_k)}^2 \norm{\hat{A}'(\uk)^*(\hat{A}(\uk)-\yd)}^2\Big)\\
	\leq & 2\Big(-\norm{F(\uk)-\yd}^2 + \inner{F(\uk)-\yd}{F(\uk)-y - F'(\uk)(\uk-\udag)} \\ 
	& + \delta \norm{F(\uk)-\yd} + \norm{F'(\uk)^*(F(\uk)-\yd)}^2\\
	& - {\lambda^{\delta}_k} \inner{\hat{A}(\uk)-\yd}{\hat{A}'(\uk)(\uk-\udag)} + {(\lambda^{\delta}_k)}^2 \norm{\hat{A}'(\uk)^*(\hat{A}(\uk)-\yd)}^2\Big)=:2(T_F+T_{\hat{A}}), \\
	\end{aligned}
	\end{equation}
	where $T_F$ and $T_{\hat{A}}$ represent the terms in $F$ and $\hat{A}$, respectively, in the right hand side of the second inequality.	
	Next, we study $T_F$ and $T_{\hat{A}}$ separately, providing for both of them a bound in terms of the square norm of the residual of $F$ at the $k$-iteration.  
	By using \eqref{eq:tcc}, \eqref{eq:lcF}, \eqref{eq:disc} and \eqref{eq:disc_ca} we get
	\begin{equation}\label{eq:mono_h1}
	\begin{aligned}
	T_F =&  - \norm{F(\uk)-\yd}^2 + \inner{F(\uk)-\yd}{F(\uk)-y - F'(\uk)(\uk-\udag)} \\ 
	& \qquad +  \delta \norm{F(\uk)-\yd} +  \norm{F'(\uk)^*(F(\uk)-\yd)}^2\\
	\leq &  - \norm{F(\uk)-\yd}^2 + \nu \inner{F(\uk)-\yd}{F(\uk)-y} \\
	&\qquad  + \delta \norm{F(\uk)-\yd} + L_F^2 \norm{F(\uk)-\yd}^2\\
	\leq & \norm{F(\uk)-\yd} \left( -(1 - L_F^2-\nu) \norm{F(\uk)-\yd} + \left(1 + \nu \right) \delta \right)\\
	\leq & - C_\tau \norm{F(\uk)-\yd}^2\,.
	\end{aligned}
	\end{equation}
	On the contrary, to estimate the term $T_{\hat{A}}$, we utilize \eqref{eq:lcA}, \eqref{eq:A_stab}, \eqref{eq:lambda_ca1} and \eqref{eq:lambda_ca2} from what it follows
	\begin{equation}
	\label{eq:mono_h2a}
	\begin{aligned}
	T_{\hat{A}}=& -  {\lambda^{\delta}_k} \inner{\hat{A}(\uk)-\yd}{\hat{A}'(\uk)(\uk-\udag)} +  {(\lambda^{\delta}_k)}^2 \norm{\hat{A}'(\uk)^*(\hat{A}(\uk)-\yd)}^2\\
	\leq & {\lambda^{\delta}_k}L_{\hat{A}} \norm{\hat{A}(\uk)-\yd}\left(\rho+ {\lambda^{\delta}_k} L_{\hat{A}} \norm{{\hat{A}}(\uk)-\yd}\right)   \\
	\leq&  2{\lambda^{\delta}_k} L_{\hat{A}} C^{\delta}_{\hat{A}} \rho\leq 2 L_{\hat{A}} C^{\delta}_{\hat{A}} C^{\delta}_{\lambda}\rho \norm{F(\uk)-\yd}^2. 
	\end{aligned}
	\end{equation}
	Finally, by inserting \eqref{eq:mono_h2a} and \eqref{eq:mono_h1} into \eqref{eq:mono_h}, we get
	\begin{equation}\label{eq: diff_ukp_uk}
	\norm{\ukp-\udag}^2 - \norm{\uk-\udag}^2 \leq -2 \left(C_\tau - 2L_{\hat{A}} C^{\delta}_{\hat{A}} C^{\delta}_\lambda \rho \right) \norm{F(\uk)-\yd}^2
	\end{equation}
	which gives the assertion of the theorem thanks to \eqref{eq:c_tau}.
\end{proof}
\begin{remark}
	We note that \eqref{eq:lambda_ca1} is asymptotically always satisfied if the iteration is convergent and the damping 
	parameters \eqref{eq:lambda_ca} are chosen sufficiently small. Equation \ref{eq:c_tau} requires that the operator $F$ is not too 
	non-linear and that the iterates are close to the true solution. 
\end{remark}

To prove a convergence result for the iteratively regularized Landweber iteration \eqref{eq:SDscheme}, we first need to show that there exists a finite stopping index $k_*$ for which the discrepancy principle \eqref{eq: discrep_princ} holds.
\begin{corollary}
	Under the assumptions of Proposition \ref{le:mon}, let $k_*$ be the first index such that the discrepancy principle \eqref{eq: discrep_princ} is verified.
	Then 
	\begin{equation}\label{eq: res_ks}
	k_*(\tau\delta)^2\leq \sum_{k=0}^{k_*-1}\norm{F(\uk)-\yd}^2\leq \frac{1}{2(C_\tau-2 L_{\hat{A}}  C^{\delta}_{\hat{A}} C^{\delta}_\lambda\rho)}\norm{u_0 - \udag}.
	\end{equation}
	In particular, when $\delta=0$, we have 
	\begin{equation}\label{eq: res_ks_no_noise}
	\sum_{k=0}^{\infty}\norm{F(\uk)-y}^2<\infty.
	\end{equation}	
\end{corollary}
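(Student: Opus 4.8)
The plan is to derive the chain of inequalities in \eqref{eq: res_ks} directly from the monotonicity estimate established in \autoref{le:mon}, using a telescoping-sum argument. The key observation is that \autoref{le:mon} does not merely assert $\norm{\ukp-\udag}\leq\norm{\uk-\udag}$, but actually provides the \emph{quantitative} decrease \eqref{eq: diff_ukp_uk}, namely
\begin{equation*}
\norm{\uk-\udag}^2 - \norm{\ukp-\udag}^2 \geq 2\left(C_\tau - 2L_{\hat{A}} C^{\delta}_{\hat{A}} C^{\delta}_\lambda \rho\right)\norm{F(\uk)-\yd}^2.
\end{equation*}
First I would verify that the hypotheses of \autoref{le:mon} apply at every index $k$ with $0\leq k < k_*$. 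Since $k_*$ is, by definition, the \emph{first} index at which the discrepancy principle \eqref{eq: discrep_princ} is satisfied, for each $k<k_*$ we have $\norm{F(\uk)-\yd} > \tau\delta$, so the discrepancy condition \eqref{eq:disc} of the proposition holds; moreover, an inductive argument starting from $u_0\in\mathcal{B}_\rho(\udag)$ shows that each $\uk\in\mathcal{B}_\rho(\udag)$, so the proposition is indeed applicable at every such step.

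Next I would sum \eqref{eq: diff_ukp_uk} over $k=0,\ldots,k_*-1$. The left-hand side telescopes to $\norm{u_0-\udag}^2 - \norm{u_{k_*}-\udag}^2 \leq \norm{u_0-\udag}^2$, while the right-hand side becomes $2\left(C_\tau - 2L_{\hat{A}} C^{\delta}_{\hat{A}} C^{\delta}_\lambda \rho\right)\sum_{k=0}^{k_*-1}\norm{F(\uk)-\yd}^2$. Rearranging and dividing by the positive factor (positive by \eqref{eq:c_tau}) gives the right-hand inequality of \eqref{eq: res_ks}. The left-hand inequality is immediate: for each $k<k_*$ we have $\norm{F(\uk)-\yd}^2 > (\tau\delta)^2$ by the definition of $k_*$, so summing $k_*$ such terms yields $k_*(\tau\delta)^2 \leq \sum_{k=0}^{k_*-1}\norm{F(\uk)-\yd}^2$. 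Finally, \eqref{eq: res_ks_no_noise} follows by setting $\delta=0$: the discrepancy principle never triggers (or $k_*=\infty$), so the bound on the partial sums $\sum_{k=0}^{k_*-1}\norm{F(\uk)-y}^2 \leq \frac{1}{2(C_\tau-2L_{\hat{A}}C^{\delta}_{\hat{A}}C^{\delta}_\lambda\rho)}\norm{u_0-\udag}$ holds uniformly in $k_*$, and letting $k_*\to\infty$ gives convergence of the infinite series.

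I do not expect a serious obstacle here, since the heavy lifting was done in \autoref{le:mon}. The only point requiring care is the induction that keeps every iterate in $\mathcal{B}_\rho(\udag)$: one must confirm that \autoref{le:mon} legitimately applies at step $k$ before invoking it to conclude $\ukp\in\mathcal{B}_\rho(\udag)$, which is exactly the inclusion the proposition delivers, so the induction closes cleanly. A minor subtlety worth flagging is the apparent dimensional mismatch in \eqref{eq: res_ks}, where the left and middle terms scale like a squared norm while the right-hand bound $\norm{u_0-\udag}$ is written without a square; I would treat this as a typographical slip and read the right-hand side as $\norm{u_0-\udag}^2$, which is what the telescoping argument actually produces.
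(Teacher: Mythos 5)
Your proof is correct and follows essentially the same route as the paper's: summing the quantitative decrease \eqref{eq: diff_ukp_uk} over $k=0,\dots,k_*-1$, telescoping, bounding via the discrepancy principle, and passing to the limit for $\delta=0$ (the paper additionally remarks that one may let $\tau\to\infty$ in that case to sharpen the constant). Your reading of the right-hand side of \eqref{eq: res_ks} as $\norm{u_0-\udag}^2$ is also correct; the missing square in the paper's statement is indeed a typographical slip, as the telescoping argument shows.
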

\begin{proof}
	We sum the inequality \eqref{eq: diff_ukp_uk} from 0 to the step $k_*-1$, that is
	\begin{equation*}
	\sum_{k=0}^{k_*-1} \Big[ \norm{\ukp-\udag}^2 - \norm{\uk-\udag}^2 \Big]\leq -2(C_\tau - 2L_A C^{\delta}_A C^{\delta}_\lambda \rho) \sum_{k=0}^{k_*-1}\norm{F(\uk)-\yd}^2,
	\end{equation*}	
	hence
	\begin{equation}\label{eq: sum 0 ks}
	\norm{u_0-\udag}^2 - \norm{\uks-\udag}^2\geq 2(C_\tau - 2L_{\hat{A}} C^{\delta}_{\hat{A}} C^{\delta}_\lambda \rho) \sum_{k=0}^{k_*-1}\norm{F(\uk)-\yd}^2.
	\end{equation}
	Using the fact that $ \norm{u_0-\udag}^2 - \norm{\uks-\udag}^2\leq  \norm{u_0-\udag}^2$ and the discrepancy principle \eqref{eq: discrep_princ}, from  \eqref{eq: sum 0 ks}	it is straightforward to obtain  \eqref{eq: res_ks}. Moreover, when $\delta=0$, we can assume $\tau$ arbitrarily large such that $C_\tau - 2L_{\hat{A}} C^0_{\hat{A}} C^0_\lambda \rho\overset{\tau\to\infty}{\longrightarrow} 1-\nu-L^2_F - 2 L_{\hat{A}} C^0_{\hat{A}} C^0_\lambda \rho$ and $k_*\to \infty$. Therefore \eqref{eq: res_ks} reduces to 
	\begin{equation*}
	\sum_{k=0}^{\infty}\norm{F(\uk)-y}^2\leq \frac{1}{2(1-\nu-L^2_F - 2 L_{\hat{A}} C^0_{\hat{A}} C^0_\lambda \rho)}\norm{u_0 - \udag},
	\end{equation*}	
	which gives the assertion of the theorem.		 	
\end{proof}
Next, before proving the convergence result in case of noisy data, we show that, when $\delta=0$, the residual norm of the modified Landweber iteration \eqref{eq:SDscheme} goes to zero as $k$ tends to infinity. This means that if the iterations converge, then the limit is a solution of \eqref{eq:operator}.
\begin{theorem}\label{th: conv_no_noise}
	Let Assumption \ref{as:fa}, \eqref{eq:A_stab} and \eqref{eq:lambda_ca2} hold, with $\delta=0$. Then the iteratively regularized Landweber iteration \eqref{eq:SDscheme}, applied to the exact data $y$, converges to a solution $\udag$ of $F(u)=y$.
\end{theorem}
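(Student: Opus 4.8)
The plan is to follow the classical convergence argument for Landweber-type schemes, cf.\ \cite[Chapter~3]{KalNeuSch08}, adapted to the extra damping term. Applying \autoref{le:mon} inductively (with $\delta=0$, so that the discrepancy requirement \eqref{eq:disc} is vacuous and $\tau$ may be taken arbitrarily large) shows that every iterate remains in $\mathcal{B}_\rho(\udag)$ and that $\norm{\uk-\udag}$ is monotonically non-increasing; being bounded below, it converges. Moreover the preceding corollary, in the form \eqref{eq: res_ks_no_noise}, gives $\sum_{k=0}^\infty\norm{F(\uk)-y}^2<\infty$, whence $\norm{F(\uk)-y}\to 0$. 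It therefore suffices to prove that $(\uk)$ is a Cauchy sequence: its limit $\bar u$ lies in $\mathcal{B}_\rho(\udag)$, and continuity of $F$ together with $\norm{F(\uk)-y}\to 0$ forces $F(\bar u)=y$, so that $\bar u$ solves \eqref{eq:operator}.

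To prove the Cauchy property I would use the minimal-residual trick. Given $j\leq m$, pick $l\in\{j,\dots,m\}$ minimising $\norm{F(u_l)-y}$ over that range and split $\norm{u_m-u_j}\leq\norm{u_m-u_l}+\norm{u_l-u_j}$. Expanding each squared distance about the common point $u_l-\udag$ by the polarisation identity,
\[
\norm{u_l-u_j}^2 = 2\inner{u_l-u_j}{u_l-\udag} + \norm{u_j-\udag}^2 - \norm{u_l-\udag}^2,
\]
and analogously for $\norm{u_m-u_l}^2$, the two squared-norm differences tend to $0$ as $j\to\infty$ because $\norm{\uk-\udag}^2$ converges. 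Hence everything reduces to controlling the inner products, which are always evaluated against the fixed vector $u_l-\udag$.

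Telescoping along \eqref{eq:SDscheme} yields
\[
\inner{u_l-u_j}{u_l-\udag} = -\sum_{i=j}^{l-1}\Bigl(\inner{F(u_i)-y}{F'(u_i)(u_l-\udag)} + \lambda_i\inner{\hat{A}(u_i)-y}{\hat{A}'(u_i)(u_l-\udag)}\Bigr),
\]
and I would bound each summand by a constant multiple of $\norm{F(u_i)-y}^2$. For the $\hat{A}$ term this is immediate from Cauchy--Schwarz, the bound \eqref{eq:lcA}, the estimate \eqref{eq:A_stab} (with $\delta=0$), the inclusion $u_l\in\mathcal{B}_\rho(\udag)$, and the step-size condition \eqref{eq:lambda_ca2}:
\[
\lambda_i\,\abs{\inner{\hat{A}(u_i)-y}{\hat{A}'(u_i)(u_l-\udag)}} \leq \lambda_i L_{\hat{A}} C^{0}_{\hat{A}}\rho \leq C^{0}_\lambda L_{\hat{A}} C^{0}_{\hat{A}}\rho\,\norm{F(u_i)-y}^2.
\]

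The $F$ term is the technical heart and the step I expect to be the main obstacle. Writing $F'(u_i)(u_l-\udag)=F'(u_i)(u_l-u_i)+F'(u_i)(u_i-\udag)$, one uses the tangential cone condition \eqref{eq:tcc} to replace the linearised quantities by residual differences $F(u_l)-F(u_i)=\bigl(F(u_l)-y\bigr)-\bigl(F(u_i)-y\bigr)$ and $F(u_i)-y$; each resulting inner product is then bounded by terms of the form $\norm{F(u_i)-y}^2$ and $\norm{F(u_i)-y}\norm{F(u_l)-y}$, and by the minimality of $\norm{F(u_l)-y}$ the latter is itself $\leq\norm{F(u_i)-y}^2$. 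The delicate point is that the stated form of \eqref{eq:tcc} compares $F'$ only at the running point against $\udag$; to handle $F'(u_i)(u_l-u_i)$ one genuinely needs the two-point version of the tangential cone condition on $\mathcal{B}_\rho(\udag)$ (the bound $\norm{F'(u)}\leq L_F$ alone does not let one trade $\norm{u_l-u_i}$ for residuals). Granting this, both contributions are dominated by $\sum_{i=j}^{m-1}\norm{F(u_i)-y}^2$, the tail of a convergent series, which tends to $0$ as $j\to\infty$. Consequently $\norm{u_m-u_j}\to 0$, the sequence $(\uk)$ is Cauchy, and the theorem follows.
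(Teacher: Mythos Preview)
Your proposal is correct and follows essentially the same route as the paper: the minimal-residual index, the polarisation split around $u_l-\udag$, the telescoping of the inner product into an $F$-part and an $\hat{A}$-part, and the bounding of each part by $\sum_i\norm{F(u_i)-y}^2$ are all exactly what the paper does (with index names $k\leq h\leq j$ in place of your $j\leq l\leq m$).

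Your observation about the tangential cone condition is on point and worth keeping: the paper bounds $\norm{F(u_i)-F(u_h)-F'(u_i)(u_i-u_h)}$ by $\nu\norm{F(u_i)-F(u_h)}$, which is the two-point form of \eqref{eq:tcc}, even though Assumption~\ref{as:fa} states only the one-point version against $\udag$. So the issue you flag as ``the main obstacle'' is not a defect of your argument relative to the paper's --- the paper silently uses the same stronger hypothesis. In the standard reference \cite{KalNeuSch08} the tangential cone condition is indeed stated in two-point form, and that is what is needed here.
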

\begin{proof}
	Let us define 
	\begin{equation}\label{eq: def_ek}
	e_k=u_k-\udag.
	\end{equation}	
	Given $j\geq k$, we choose an integer $h$ such that $k\leq h\leq j$ and 
	\begin{equation}\label{eq: Fuh_y}
	\norm{F(u_h)-y}\leq \norm{F(u_i)-y},\qquad \forall\, k\leq i\leq j.
	\end{equation}
	Then, we consider 
	\begin{equation}\label{eq: norm_ej_ek}
	\norm{e_j-e_k}\leq \norm{e_j-e_k}+\norm{e_h-e_k}.
	\end{equation}
	We only analyse the first term in \eqref{eq: norm_ej_ek} since the other one can be treated in the same way.	So, we have
	\begin{equation}\label{eq: ejmek}
	\norm{e_j-e_h}^2=\langle e_j-e_h, e_j-e_h \rangle
	=2\langle e_h-e_j,e_h\rangle + \norm{e_j}^2-\norm{e_h}^2.   
	\end{equation}	 	
	We only need to study the first term in the right-hand side of the previous equality since $\norm{e_j}^2$ and $\norm{e_h}^2$ are monotonically decreasing to some $\varepsilon \geq 0$, thanks to Proposition \ref{le:mon} (even in the case $\delta=0$), hence $\norm{e_j}^2-\norm{e_h}^2$ converges to $0$ as $k\to\infty$. We show that $|\langle e_h-e_k, e_h \rangle |\to 0$ as $k\to +\infty$. 
	Using the definition of $\e_j, e_h$, see \eqref{eq: def_ek}, the iteration \eqref{eq:SDscheme} and the fact that $j\geq h$, we find
	\begin{equation}\label{eq: S_A+S_F}
	\begin{aligned}
	|\langle e_h-e_j, e_h \rangle | &= |\langle u_j-u_h, e_h \rangle | \\
	&\leq \Big| \Big\langle \sum_{i=h}^{j-1} F'(u_i)^* (y-F(u_i))+\lambda_i \hat{A}'(u_i)^*(y-\hat{A}(u_i)), e_h \Big\rangle \Big|\\
	&\leq \sum_{i=h}^{j-1}\Big| \Big\langle y-F(u_i), F'(u_i)(u_h-\udag) \Big \rangle \Big| + \sum_{i=h}^{j-1}\lambda_i\Big| \Big\langle y-\hat{A}(u_i), \hat{A}'(u_i)(u_h-\udag) \Big \rangle \Big|\\
	&=: S_F + S_{\hat{A}} 
	\end{aligned}
	\end{equation} 
	We study the two terms $S_F$ and $S_{\hat{A}}$ separately.\\
	\textit{Term $S_F$:} the analysis can be found, for example, in \cite{KalNeuSch08}. For the sake of completeness, here we summarize the principal steps to obtain an estimate of $S_F$. Adding and subtracting $u_i$ {and $y-F(u_i)$} in $S_F$, we find
	\begin{equation*}
	S_F\leq \sum_{i=h}^{j-1} \left[\norm{y-F(u_i)}\left(\norm{y-F(u_i)-F'(u_i)(\udag-u_i)} +\norm{y-F(u_h)}+\norm{F(u_i)-F(u_h)-F'(u_i)(u_i-u_h)}\right)\right],
	\end{equation*}
	hence, applying the tangential cone condition \eqref{eq:tcc}
	\begin{equation*}
	S_F\leq 2\nu \sum_{i=h}^{j-1} \norm{y-F(u_i)}^2 +(1+\nu) \sum_{i=h}^{j-1} \norm{y-F(u_i)} \norm{y-F(u_h)}.
	\end{equation*}	
	From \eqref{eq: Fuh_y}, we finally find
	\begin{equation}\label{eq: S_F}
	S_F\leq (1+3\nu) \sum_{i=h}^{j-1} \norm{y-F(u_i)}^2.
	\end{equation}
	\textit{Term $S_A$}: Utilizing the hypotheses \eqref{eq:A_stab} and \eqref{eq:lambda_ca2}, with $\delta=0$, we have
	\begin{equation}\label{eq: S_A}
	\begin{aligned}
	S_{\hat{A}}\leq \sum_{i=h}^{j-1}\lambda_i \Big| \Big\langle y-{\hat{A}}(u_i), \hat{A}'(u_i)(u_h-\udag)\Big\rangle \Big| &\leq \sum_{i=h}^{j-1}\lambda_i \norm{ y-\hat{A}(u_i)} \norm{\hat{A}'(u_i)(u_h-\udag)}\\
	&\leq L_{\hat{A}} C^0_{\hat{A}}\,\rho \sum_{i=h}^{j-1}\lambda_i\leq L_{\hat{A}} C^0_{\hat{A}} C^0_{\lambda}\, \rho \sum_{i=h}^{j-1} \norm{F(u_i)-y}^2   
	\end{aligned}  
	\end{equation}
	Inequalities \eqref{eq: S_A} and \eqref{eq: S_F}, inserted in Equation \eqref{eq: S_A+S_F}, give
	\begin{equation}\label{eq: est_ehmej}
	|\langle e_h-e_j, e_h \rangle |\leq (1+3\nu+ L_{\hat{A}} C^0_{\hat{A}} C^0_{\lambda}\, \rho)\sum_{i=h}^{j-1}\norm{y-F(u_i)}^2.
	\end{equation}
	Reasoning in the same way for $|\langle e_h-e_k, e_h \rangle |$ we find
	\begin{equation}\label{eq: est_ehmek}
	|\langle e_h-e_k, e_h \rangle |\leq (1+3\nu+ L_{\hat{A}} C^0_{\hat{A}} C^0_{\lambda}\, \rho)\sum_{i=k}^{h-1}\norm{y-F(u_i)}^2.
	\end{equation}
	From these estimates, it follows that both $|\langle e_h-e_j, e_h \rangle |$ and $|\langle e_h-e_k, e_h \rangle |$ go to zero as $k\to \infty$ thanks to \eqref{eq: res_ks_no_noise}.
	Therefore, from \eqref{eq: ejmek} and \eqref{eq: norm_ej_ek}, we find that $\{e_k\}$ is a Cauchy sequence. 
	Then, from \eqref{eq: def_ek}, we derive that $\{u_k\}$ is a Cauchy sequence as well hence the assertion of the theorem follows. 
\end{proof}
In the case of noisy data, we cannot expect that the iteratively regularized Landweber iteration converges, since $\yd$ might not belong to the range of the operator $F$. In fact, we can only obtain a stable approximation of a solution of $F(u)=y$ provided the discrepancy principle \eqref{eq: discrep_princ} is employed, i.e, the iteration is stopped after a finite number of steps. 
Here, assuming that the discrepancy principle \eqref{eq: discrep_princ} holds, we prove a stability result for the iteratively regularized Landweber iteration in the case of noisy data.
\begin{theorem}
	Under Assumption \ref{as:fa}, \eqref{eq:disc_ca} and \eqref{eq:lambda_ca}, let $k_*=k_*(\delta,\yd)$ be chosen such that \eqref{eq: discrep_princ} holds. {Moreover, we assume that for each fixed iteration $k$, ${\lambda^{\delta}_k}\to \lambda_k$, as $\delta\to 0$}. Then, the data driven iteratively regularized Landweber iteration \eqref{eq:SDscheme} converges to a solution of $F(u)=y$, as $\delta\to 0$.   	
\end{theorem}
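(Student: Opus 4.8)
The plan is to follow the classical convergence proof for the discrepancy principle, as in \cite[Chapter~2]{KalNeuSch08}, adapted to the additional damping term in \eqref{eq:SDscheme}. I first fix the target: by Theorem~\ref{th: conv_no_noise} the noise-free iteration driven by the exact data $y$ converges to a solution $\udag$ of $F(u)=y$, and this $\udag$ is the point to which I aim to show the stopped noisy iterates converge. I write $u_k^\delta$ for the $k$-th iterate of \eqref{eq:SDscheme} generated from $\yd$ and $u_k$ for the exact-data iterate, so that $u_k^0=u_k$.

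The proof rests on two ingredients: continuous dependence of finitely many iteration steps on the data, and the monotonicity already established. For the first I would show by induction on $k$ that, for each fixed index, the map $\yd\mapsto u_k^\delta$ is continuous at $\yd=y$. The base case holds since $u_0^\delta=u_0$ is independent of the data, and the induction step uses the continuity of $F,F',\hat{A},\hat{A}'$ from \autoref{as:fa} together with a choice of $\lambda_k$ that depends continuously on the residual $\norm{F(u_k^\delta)-\yd}$ via \eqref{eq:lambda_ca2}; it follows that $u_k^\delta\to u_k$ as $\delta\to0$ for every fixed $k$. The second ingredient is Proposition~\ref{le:mon}, which gives $\norm{u_{k+1}^\delta-\udag}\le\norm{u_k^\delta-\udag}$ for all $k<k_*$, and hence $\norm{u_{k_*}^\delta-\udag}\le\norm{u_N^\delta-\udag}$ whenever $N\le k_*$.

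With these in hand I argue along an arbitrary sequence $\delta_n\to0$, distinguishing two cases according to the behaviour of the stopping index $k_*(\delta_n)$. If $k_*(\delta_n)$ has a constant subsequence $k_*\equiv k$, then continuity gives $u_k^{\delta_n}\to u_k$, while the discrepancy principle \eqref{eq: discrep_princ} yields $\norm{F(u_k)-y}=\lim_n\norm{F(u_k^{\delta_n})-y^{\delta_n}}\le\lim_n\tau\delta_n=0$, so $u_k$ solves $F(u)=y$; then \eqref{eq:lambda_ca2} forces $\lambda_k=0$, the exact iteration is stationary from step $k$, and therefore $u_k=\udag$, giving $u_k^{\delta_n}\to\udag$. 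If instead $k_*(\delta_n)\to\infty$, I would fix $\varepsilon>0$, use Theorem~\ref{th: conv_no_noise} to pick $N$ with $\norm{u_N-\udag}<\varepsilon/2$, then use continuity to pick $n$ so large that $k_*(\delta_n)>N$ and $\norm{u_N^{\delta_n}-u_N}<\varepsilon/2$, and finally chain the monotonicity estimate from $N$ up to $k_*(\delta_n)$ to obtain $\norm{u_{k_*}^{\delta_n}-\udag}\le\norm{u_N^{\delta_n}-\udag}<\varepsilon$. Since every subsequence yields the same limit $\udag$, this proves $u_{k_*(\delta)}^\delta\to\udag$ as $\delta\to0$.

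The step I expect to be the main obstacle is the continuity of the finite-length iteration in the data. The damping parameters $\lambda_k$ are only constrained by the inequalities \eqref{eq:lambda_ca1}--\eqref{eq:c_tau} and could a priori be selected discontinuously in $\yd$; making the stability argument rigorous therefore requires committing to a selection rule for $\lambda_k$ that depends continuously on $\norm{F(u_k^\delta)-\yd}$ and verifying that this rule still satisfies \eqref{eq:lambda_ca} uniformly for small $\delta$. A secondary subtlety is the bounded-stopping-index case, where one must check that the non-attainability assumption \eqref{eq:non-attain} does not obstruct the conclusion $\lambda_k=0$; this is precisely why it is essential that \eqref{eq:lambda_ca2} ties $\lambda_k$ to the $F$-residual and not to the $\hat{A}$-residual, which does not vanish at a solution.
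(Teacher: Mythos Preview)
Your proposal is correct and follows essentially the same route as the paper: split along subsequences according to whether the stopping index stays bounded or tends to infinity, use continuous dependence of each fixed iterate on the data together with the discrepancy bound in the first case, and combine the monotonicity of Proposition~\ref{le:mon} with the noise-free convergence of Theorem~\ref{th: conv_no_noise} in the second. Your discussion of the need for a continuous selection rule for $\lambda_k$ is a point the paper's proof uses implicitly (it writes $\lambda_k^{\delta_n}\to0$ without comment), so you have in fact been more explicit about the only genuinely delicate step.
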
 
\begin{proof}
	We recall that $\udag$ represents the limit of the iteration \eqref{eq:SDscheme} when $\delta=0$, i.e, the case with data $y$. Let $\{\delta_n\}$ be a sequence such that $\{\delta_n\}\to 0$ as $n\to\infty$ and denote with $y_n:=y^{\delta_n}$ the sequence of the perturbed data. Let $k_n=k_*(\delta_n,y_n)$ be the stopping index for which the discrepancy principle \eqref{eq: discrep_princ} holds, i.e. 
	\begin{equation}\label{eq: discr_princ_yn}
	\norm{F(u_{k_n}^{\delta_n})-y_n}\leq \tau \delta_n.
	\end{equation} 
	We now distinguish two cases as $n\to\infty$:
	\begin{enumerate}
		\item $k_n\to k,\, k\in \mathbb{R}^+$;
		\item $k_n \to +\infty$.
	\end{enumerate}
	\textit{Case (i)}: to avoid technicalities, we can assume that $k_n=k$, for all $n\in\mathbb{N}$. Therefore, from \eqref{eq: discr_princ_yn} we get
	\begin{equation*}
	\norm{F(u_{k}^{\delta_n})-y_n}\leq \tau \delta_n.
	\end{equation*}	 
	From the continuity hypotheses on $F$, $F'$ and $\hat{A}$, $\hat{A}'$ and recalling \eqref{eq:lambda_ca2}, we find that
	\begin{equation*}
	u^{\delta_n}_k \to u_k,\quad F(u^{\delta_n}_k)\to F(u_k)=y,\quad \hat{A}(u^{\delta_n}_k)\to \hat{A}(u_k),\quad \lambda^{\delta_n}_k\to 0,\quad \text{as}\, n\to \infty.
	\end{equation*}
	This means that the $k-$th iterate of \eqref{eq:SDscheme} is a solution of $F(u)=y$ and thus the iteration terminates with $\udag=u_k$.\\
	\textit{Case (ii)}: We choose $k$ sufficiently large such that $k_n> k$ and by Proposition \ref{le:mon} we have
	\begin{equation}\label{eq: udeltakn_us}
	\norm{u^{\delta_n}_{k_n}-\udag}\leq \norm{u^{\delta_n}_{k}-\udag}\leq \norm{u^{\delta_n}_{k}-u_k}+\norm{u_k-\udag}
	\end{equation} 
	Given $\epsilon>0$, from Theorem \ref{th: conv_no_noise} { there exists  $\bar{k}=\bar{k}(\epsilon)$ such that 
		\begin{equation}\label{eq: uk_us_eps}
		\norm{u_k-\udag}<\epsilon/2,\qquad \forall\, k>\bar{k}. 
		\end{equation}
		Moreover, for $k$ fixed, there exists $\bar{n}=\bar{n}(\epsilon,k)$ such that for all $n>\bar{n}$  
		\begin{equation}\label{eq: udeltak_uk_eps}
		\norm{u^{\delta_n}_{k}-u_k}<\epsilon/2,\qquad \forall\, n>\bar{n}.
		\end{equation}
		In fact, writing explicitly the difference $u^{\delta_n}_{k}-u_k$, we get
		\begin{equation*}
		\begin{aligned}
		u^{\delta_n}_k-u_k=u^{\delta_n}_{k-1}-u_{k-1}&- F'(u^{\delta_n}_{k-1})^*(F(u^{\delta_n}_{k-1})-y_n)+ F'(u_{k-1})^*(F(u_{k-1})-y)\\
		&-\lambda^{\delta_n}_{k-1}\hat{A}'(u^{\delta_n}_{k-1})^*(\hat{A}(u^{\delta_n}_{k-1})-y_n)+\lambda _{k-1} \hat{A}'(u_{k-1})^*(\hat{A}(u_{k-1})-y).
		\end{aligned}
		\end{equation*}	
		The estimate \eqref{eq: udeltak_uk_eps} follows using again the continuity property of $F$ and $\hat{A}$, and also the hypothesis that $\lambda^{\delta_n}_{k-1}\to\lambda_{k-1}$, for every fixed iteration.  
	}
	Therefore, by \eqref{eq: udeltak_uk_eps} and \eqref{eq: uk_us_eps} in \eqref{eq: udeltakn_us}, we find	$\norm{u^{\delta_n}_{k_n}-\udag}< \epsilon$ for $n>\bar{n}$, which means that $u^{\delta_n}_{k_n}\to \udag$ as $n\to \infty$. 
\end{proof}

\newpage
\section{Numerical Experiments}\label{sec: num_exp}
In this section we present some numerical examples related to the iteration \eqref{eq:SDscheme} both for linear operator equations and nonlinear ones. We first consider the linear case for which the operator is the Radon transform. Then, we present some results for the Schlieren model which we take as the prototype for nonlinear operators.\\
{In the following numerical examples, we always consider noise in the data hence we do not need distinguish anymore between $\lambda_k$ and $\lambda^{\delta}_k$. For this reason, in order to simplify the notation, from this point on we merely use the notation $\lambda_k$ instead of $\lambda^{\delta}_k$}.\\ 
All the numerical results in the following sections are based on the assumption that the damping term in \eqref{eq:SDscheme} is a bounded linear map  $\tilde{A}:X \to Y$, hence the iterates become
\begin{equation}\label{eq: IRLI_Alinear}
\ukp := \uk - F'(\uk)^*\bigl( F(\uk)-\yd \bigr) - {\lambda_k}
\tilde{A}^*\bigl(\tilde{A}\uk -\yd \bigr), \qquad k\in\N. 
\end{equation}
To build the operator $\tilde{A}:X\to Y$, we assume to have some a-priori information about \eqref{eq:operator} in the form of a finite set of expert data that is 
\begin{equation}\label{eq: train_data}
(u^{(i)},y^{(i)})\in X \times Y,\,\, \textrm{for}\,\, i=1,\cdots,n,
\end{equation}
where $n>0$,  and we define, for $\tilde{T}\in B(X,Y)$,  the functional
\begin{equation}\label{eq: def_func}
l(\tilde{T})=\frac{1}{2} \sum_{i=1}^n \norm{\tilde{T} u^{(i)}-y^{(i)}}^2_{Y}.
\end{equation}
The bounded operator $\tilde{A}$ is defined as the operator which minimizes the functional \eqref{eq: def_func}, i.e.,  
\begin{equation}\label{eq: min_A}
l(\tilde{A})=\underset{\tilde{T}\in\mathcal{B}(X,Y)}{\min} l(\tilde{T})=\underset{\tilde{T}\in\mathcal{B}(X,Y)}{\min} \left[\frac{1}{2} \sum_{i=1}^n \norm{\tilde{T} u^{(i)}-y^{(i)}}^2_{Y}\right].
\end{equation} 
Let $H\in {B}(X,Y)$ and $t\in J$ where $J\subset \mathbb{R}$ is an interval containing the origin. Then
\begin{equation*}
\begin{aligned}	
\frac{l(\tilde{T}+tH)-l(\tilde{T})}{t}&=\frac{1}{2}\frac{1}{t}\Bigg\{\sum_{i=1}^{n}\left[\Big\langle(\tilde{T}+tH)u^{(i)}-y^{(i)},(\tilde{T}+tH)u^{(i)}-y^{(i)}\Big\rangle  - \Big\langle \tilde{T}u^{(i)}-y^{(i)},\tilde{T}u^{(i)}-y^{(i)}\Big\rangle  \right]\Bigg\}\\
&=\sum_{i=1}^{n}\left[\Big\langle \tilde{T}u^{(i)}-  y^{(i)}, Hu^{(i)}\Big\rangle +\frac{t}{2}\Big\langle Hu^{(i)}, Hu^{(i)}\Big\rangle\right].
\end{aligned}	
\end{equation*}
Therefore, as $t\to 0$, for each $H\in {B}(X,Y)$ and every $(u^{(i)},y^{(i)})\in X\times Y$, for $i=1,\cdots,n$, the operator $\tilde{A}$ is defined as the operator which satisfies the condition 
\begin{equation*}
\sum_{i=1}^{n}\Big\langle \tilde{A}u^{(i)}-  y^{(i)}, Hu^{(i)}\Big\rangle=0,\qquad {\forall H\in\mathcal{B}(X,Y)}
\end{equation*}
which implies that 
\begin{equation}\label{eq: min_cond_A}
\sum_{i=1}^{n}\tilde{A}u^{(i)}=\sum_{i=1}^{n}  y^{(i)}.
\end{equation}
Now, denoting by $\{e_k(x)\}_{k\in\N}$ and $\{\bar{e}_j(y)\}_{j\in\N}$  complete orthonormal families of $X$ and $Y$, respectively, we have
\begin{equation}\label{eq: repr u,y}
\begin{aligned}
u^{(i)}&=\sum_{k\in\N}\langle u^{(i)}, e_k \rangle e_k=\sum_{k\in\N} u^{(i)}_k e_k,\,\qquad \forall i=1,\cdots,n, \\
y^{(i)}&=\sum_{j\in\N}\langle y^{(i)}, \bar{e}_j \rangle \bar{e}_j=\sum_{j\in\N} y^{(i)}_j \bar{e}_j,\, \qquad \forall i=1,\cdots,n,
\end{aligned}
\end{equation}
and we recall that each bounded operator between Hilbert spaces has a matrix representation given by
\begin{equation}\label{eq: repr_A}
\tilde{A}e_k=\sum_{j\in\N}\tilde{a}_{jk} \bar{e}_j,\qquad\qquad \text{where}\,\, \tilde{a}_{jk}=\langle \tilde{A}e_k, \bar{e}_j\rangle, 
\end{equation}
see for example \cite{Kat95}. In this way, from \eqref{eq: repr u,y} and \eqref{eq: repr_A}, we find 
\begin{equation*}
\tilde{A}u^{(i)}=\sum_{k\in\N}u^{(i)}_k \tilde{A}e_k=\sum_{k,j\in\N}\tilde{a}_{jk} u^{(i)}_k \bar{e}_j,\qquad \forall\, i=1,\cdots, n.
\end{equation*} 
Therefore, Equation \ref{eq: min_cond_A} can be rewritten as 
\begin{equation*}
\sum_{i=1}^{n}\sum_{k,j\in\N}\tilde{a}_{jk}u^{(i)}_k \bar{e}_j=\sum_{i=1}^{n}\sum_{j\in\N}y^{(i)}_j \bar{e}_j
\end{equation*}
that is, for each $j\in\N$ and $i=1,\cdots, n$, we have 	
\begin{equation*}
\sum_{k\in\N}\tilde{a}_{jk}u^{(i)}_k= y^{(i)}_j. 
\end{equation*}
In the sequel we approximate the matrix $\tilde{a}_{jk}$, with a matrix $A\in\mathbb{R}^{M\times N}$, that is we take a finite number of elements of the orthonormal families of the Hilbert spaces $X$ and $Y$. To be more precise, we employ $\{e_k\}$, for $k=1,\cdots N$, and $\{\bar{e}_j\}$, for $j=1,\cdots M$.\\	  
\textbf{Construction of $\mathbf{A}$.} 
Following considerations above, we consider
\begin{equation}\label{eq: train_data_finite}
({u}^{(i)},{y}^{(i)})\in \mathbb{R}^N \times \mathbb{R}^M,\,\, \textrm{for}\,\, i=1,\cdots,n,
\end{equation}
where $n>0$. 
With this choice, the matrix $A\in\mathbb{R}^{M\times N}$ is defined as the linear map such that 
\begin{equation}\label{eq: def_A}
AU=Y, 
\end{equation}
where $U\in\mathbb{R}^{N\times n}$ and $Y\in\mathbb{R}^{M\times n}$ are the matrices which contain columnwise the data $u^{(i)}$ and $y^{(i)}$, respectively, for $i=1,\cdots,n$. The matrix $A$ can be obtained by utilizing a Singular Value Decomposition (SVD) on $U$, hence 
\begin{equation}\label{eq: def_A_calc}
A=YU^{\dagger}.
\end{equation}

\begin{remark}
	One of the main issue of this approach is the computational burden in building the matrix $A$ that is the amount of the storage needed for the data. In fact the dimensions $M$ and $N$ of the matrix $A$ could be very large even for small scale inverse problems. 
\end{remark}
The following numerical experiments have been accomplished using Matlab and some of its functions.

\subsection{Linear Operator - Radon Transform}\label{sec:Radon}
In this section, we present and discuss some numerical experiments assuming that the operator $F$ in \eqref{eq:operator} is the Radon transform, i.e., we consider the operator equation		
\begin{equation}\label{eq:operator_R}
Ru=y.
\end{equation}
We refer, for instance, the reader to \cite{Kuc14,OlafQuin06} for the definition and properties of the Radon transform. 

In the sequel, we implement the Data-Driven Iteratively Regularized Landweber Iteration (DDIRLI) \eqref{eq: IRLI_Alinear}, adapted to Equation \eqref{eq:operator_R}, i.e  
\begin{equation}\label{eq:ModLand_R}
\ukp := \uk - \omega_R R^*\bigl( R\uk-\yd \bigr) - {\lambda^{\delta}_k} \tilde{A}^*\bigl(\tilde{A}\uk -\yd \bigr), \qquad k\in\N,
\end{equation} 
where $R^*$ is the backprojection operator and $\omega_R$ is a positive constant {such that $\norm{R^*(Ru_k-\yd)}\leq1$}.  The operator $\tilde{A}$ can be realized by a matrix $A$ which is generated following the approach discussed at the beginning of Section \ref{sec: num_exp}. For this purpose, we use the following set of training data and parameters: 
\begin{enumerate}[(a)]
	\item\label{item: point1} $180$ different equally distributed angles $\theta$ within the interval $[0,\pi)$; specifically, we consider the set $\Theta=\Big\{0,\frac{\pi}{180},\frac{\pi}{90},\cdots,\frac{179\pi}{180}\Big\}$;
	\item \label{item: point2} A sample of 50 grayscale images of handwritten digits, from MNIST database, and their related sinograms, which are obtained using the 180 directions of \eqref{item: point1}. 
\end{enumerate}
Matrices $U$ and $Y$ in \eqref{eq: def_A} are built utilizing data $({u}^{(i)},{y}^{(i)})$, where ${u}^{(i)}$ and ${y}^{(i)}$ are, respectively, the images and their related sinograms described in \eqref{item: point2}. {The number of training pairs utilized for the creation of $A$ will be specified in each numerical test.} 

In the implementation, for the set of parameters we make these choices:
\begin{enumerate}
	\item We choose
	\begin{equation}\label{value lambdak}
	\omega_R=10^{-2} \qquad \textrm{and}\qquad \lambda_{k}=7.7\times10^{-5} \norm{R u_k-\yd}_2^2,
	\end{equation}
	where $\norm{\cdot}_2$ is the spectral norm. The value of $\lambda_k$ is inferred by conditions \eqref{eq:lambda_ca}.
	\item \label{stopping_criteria} As stopping rule we use the discrepancy principle \eqref{eq: discrep_princ}, where the choice of $\tau$ will be specified for each test (see tables below), or a maximum number of 100 iterates; 
	\item The synthetic data $\yd$ are generated by adding a gaussian distributed noise with zero mean and variance $\sigma^2$, specified at each test, to the matrix of the exact data $y$;
	\item The initial guess $u_0$ is always set to be $u_0=0$. 
\end{enumerate}
Results of \eqref{eq:ModLand_R} are compared with the outcomes of the Landweber iteration, that is
\begin{equation}\label{eq:Land_R}
\ukp := \uk - \omega_R R^*\bigl( R\uk-\yd \bigr), \qquad k\in\N,
\end{equation}
{and of the Iteratively Regularized Landweber Iteration (IRLI) given by
	\begin{equation}\label{eq:IRLI_classic}
	\ukp := \uk - \omega_R R^*\bigl( R\uk-\yd \bigr) - \beta_k (\uk -u^{(0)}), \qquad k\in\N,
	\end{equation} 
	where, according to the parameters rules in \cite[Section 3.2]{KalNeuSch08}, we choose $\beta_k=(\frac{1}{4})^{k+1}$, for $k\in\mathbb{N}$, and we use as $u^{(0)}$ a noise version of the image to be reconstructed, see Figure \ref{fig: choice_u0}. Finally, the numerical results provided by these iterative algorithms are compared with those of} the Filtered Back-Projection (FBP) algorithm, which performs the inverse Radon transform. For this last case, we run the Matlab function ``iradon'' making the choice of the Ram-Lak filter.
\begin{figure}[!h]
	\centering
	\subfigure{
		\includegraphics[scale=0.6]{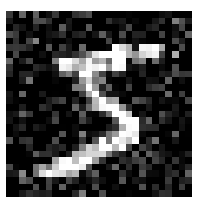}}\hspace{2cm}
	\subfigure{
		\includegraphics[scale=0.6]{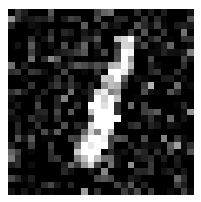}}\hspace{2cm}
	\subfigure{
		\includegraphics[scale=0.5]{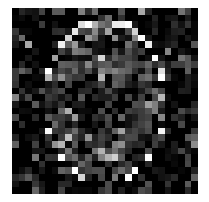}}
	\caption{Choices of $u^{(0)}$ in \eqref{eq:IRLI_classic}: image on the left is used in Test 1 and Test 3a. Image in the center is used in Test 2a/2b and Test 3b. Image on the right is used in Test 4.}\label{fig: choice_u0}
\end{figure}

We used the training and validation sets of the MNIST dataset of handwritten digits, {see \cite{LecunBottBengHaff}}. We essentially consider four different numerical experiments: In \textbf{Test 1} we reconstruct an image which has been utilized to create the matrix $A$, i.e, it is contained in the training set of MNIST. In \textbf{Test 2a/2b}, using the same matrix $A$ of test 1, we reconstruct a digit in the validation test of MNIST. {In \textbf{Test 3a} and \textbf{Test 3b}}, we reconstruct the same images of {Test 1} {and Test 2, respectively,} but using only partial data, i.e., we consider a cropped noisy version of the sinogram of the true image. {We stress that the true image of Test 3b has not been utilized to create the matrix $A$. Finally, in \textbf{Test 4}, we reconstruct the Shepp Logan phantom in order to test our method on an image which is completely different from the ones in the training and validation sets.}  

The choices and values of the main parameters, which are utilized for the generation of the data and to run the schemes \eqref{eq:ModLand_R}, \eqref{eq:Land_R} and \eqref{eq:IRLI_classic}, are contained in the tables below, i.e., Tables \ref{tab: table1}, \ref{tab: table2a}, \ref{tab: table2b}, \ref{tab: table3a}, \ref{tab: table3b} and \ref{tab: table4}: 
{In the left part of each table,} we specify the choice of the variance in the gaussian noise, the value of $\delta$ and $\tau$. 
{In the right part we state some of the results of the numerical test, i.e.,} the total number of iterates before one of the two stopping criteria in \ref{stopping_criteria} has been accomplished,
{the computational time and, finally, the relative error $\norm{u_{\textrm{true}}-u_{\textrm{rec}}}_2/\norm{u_{\textrm{true}}}_2$, where $u_{\textrm{true}}$ is the image to be reconstructed while $u_{\textrm{rec}}$ is the reconstructed image. 
	We stress that the execution time of DDIRLI is the result of the construction of the matrix $A$ and the iterates of \eqref{eq:ModLand_R}}. 
\ \\ 
\textbf{Test 1.} (Figure \ref{fig: test1}). \textit{Target: reconstruct one of the image from the training set which has been used to create the matrix $A$}. See Figure \ref{fig: test1} for the ``True Image''. 
The initial datum $\yd$ is given by the sinogram of the true image $y$, adding a gaussian distributed noise of zero mean ad variance $\sigma^2=0.5$. 
In the discrepancy principle \eqref{eq: discrep_princ} we choose $\tau=1.1$. See Table \ref{tab: table1}.
We observe, from Figure \ref{fig: test1} and Figure \ref{fig: test1_2}, how the presence of $A$ (and the fact that the information of the "True Image" are included in $A$) ``helps'' the iterates \eqref{eq:ModLand_R} in recognizing the true image, as expected. Indeed, despite the presence of a strong noise, DDIRLI provides better results compared to those of the Landweber iteration \eqref{eq:Land_R} 
{and IRLI \eqref{eq:IRLI_classic}, with a smaller number of iterates. Note that the initial guess $u^{(0)}$ in IRLI, see \eqref{eq:IRLI_classic}, is the \textquotedblleft True Image\textquotedblright\ corrupted with a gaussian noise of variance $0.05$ only.}
\ \\
\textbf{Test 2a/2b}. (Figures \ref{fig: test2a} and \ref{fig: test2b}). \textit{Target: reconstruct one of the image from the validation set, {not utilized to create $A$}}.
We reconstruct a digit from the validation set of MNIST. The difference between the two tests, see Figures \ref{fig: test2a} and \ref{fig: test2b}, is the number of the samples from training data used to create the matrix $A$. In fact, we used in Test 2a only 50 images while in Test 2b 150 images.
{From these numerical examples, we clearly observe an improvement of the results of DDIRLI when we increase the number of images utilized to create the matrix $A$}. In fact, Test 2b shows very good results, comparable with those of the Landweber scheme and {IRLI}.

\ \\
\textbf{Test 3a/3b} (Figures \ref{fig: test3a} and \ref{fig: test3b}). {\textit{Target: reconstruct images from the training or the validation set using partial data}.
	We consider the case of partial noisy measurements which corresponds to taking a cut version of the sinogram $\yd$ with a gaussian distributed noise of zero mean and variance $\sigma=0.03$. The cropped sinogram is obtained mantaining information only in some directions and setting to zero all the rest. See Figure \ref{fig: test3a} and Figure \ref{fig: test3b}. \\
	In Test 3a, we reconstruct an image belonging to the training set and which has been utilized to create the matrix $A$. On the contrary, in Test 3b, we reconstruct an image from the validation set and therefore was not utilized to generate the matrix $A$. The outcomes of our method are promising in both the cases.
}

\ \\
{\textbf{Test 4} (Figure \ref{fig: test4a} and \ref{fig: test4b}).
	\textit{Target: reconstruct Shepp Logan phantom, i.e., an image which is completely different from the elements contained in training data which are used to create the matrix $A$.}\\
	To use the MNIST training set to create $A$, the Shepp Logan image has been down-sampled to $28\times 28$. In numerical simulation, we observe that, in order to get a good reconstruction of the true image and comparable with the numerical results of the other methods, we use $600$ images from the training set. 
}

We stress that Figures \ref{fig: test1_2} C), \ref{fig: test2a_2} C), \ref{fig: test2b_2} C), \ref{fig: test3_2a} C) and \ref{fig: test3_2b} C) show the ratios between the residuals of the data driven model, $\norm{Au_k-\yd}$, and the model driven approach, $\norm{R u_k-\yd}$. {A value greater than $1$ of the ratio implies that the data driven approach has a significant influence on the iteration process. In fact, since the initial guess is $u_0=0$, from plots C), we observe how our scheme, in the first steps, is giving more weight to the information contained in $A$ to reconstruct the object. Figures \ref{fig: test1_2} D), \ref{fig: test2a_2} D), \ref{fig: test2b_2} D), \ref{fig: test3_2a} D) and \ref{fig: test3_2b} D) enlight how fast the product $\|Ru_k-y^{\delta}\|_2\  \|Au_k-y^{\delta}\|_2$ is going to zero. In fact, since $\lambda_k=7.7\times10^{-5}\|Ru_k-y^{\delta}\|^2_2$, we have that
	\begin{equation*}
	\frac{\|Ru_k-y^{\delta}\|_2}{\lambda_k \|Au_k-y^{\delta}\|_2}=\frac{1}{7.7\times10^{-5} \|Ru_k-y^{\delta}\|_2\ \|Au_k-y^{\delta}\|_2}.
	\end{equation*}
	Finally, we stress that plots A), B) and D) in Figures \ref{fig: test1_2}, \ref{fig: test2a_2}, \ref{fig: test2b_2}, \ref{fig: test3_2a}, \ref{fig: test3_2b} are all in logarithmic scale.}

\newpage
{	\begin{table}[!h]\caption{Test 1. Left part: Parameters used in the test. Right part: some of the results of the test.}\label{tab: table1}
		\begin{center}
			{\renewcommand{\arraystretch}{1.2}
				\begin{tabular}{|l|c|c|c||c|c|c|}
					\cline{1-7} & & & & & & \vspace{-0.2cm} \\
					Method & $\sigma^2$-noise& $\delta$ & $\tau$ & Iterations & Comp. Time (s) & $\frac{\norm{u_{\textrm{true}} - u_{\textrm{rec}}}_2}{\norm{u_{\textrm{true}}}_2}$\\ \hline  
					{DDIRLI} &  &  &  & 25 & 1,0732974 & 0,107250035 \\ \cline{1-1}\cline{5-7}
					{IRLI} & 0.5  & 13,36110296
					& 1.1  & 58 & 1,3639758 & 0,114869088 \\ \cline{1-1}\cline{5-7} 
					LANDWEBER &  & &  & 73 &1,734035
					& 0,12467763 \\ \cline{1-1}\cline{3-4}\cline{5-7} 
					FBP &  & / & / & / & 0,0546157
					& 0,124274036\\ \hline   
				\end{tabular}
			}
		\end{center}   
	\end{table} 
}
\vspace{2cm}
\begin{figure}[!h]
	\centering
	\subfigure{
		\includegraphics[scale=0.35]{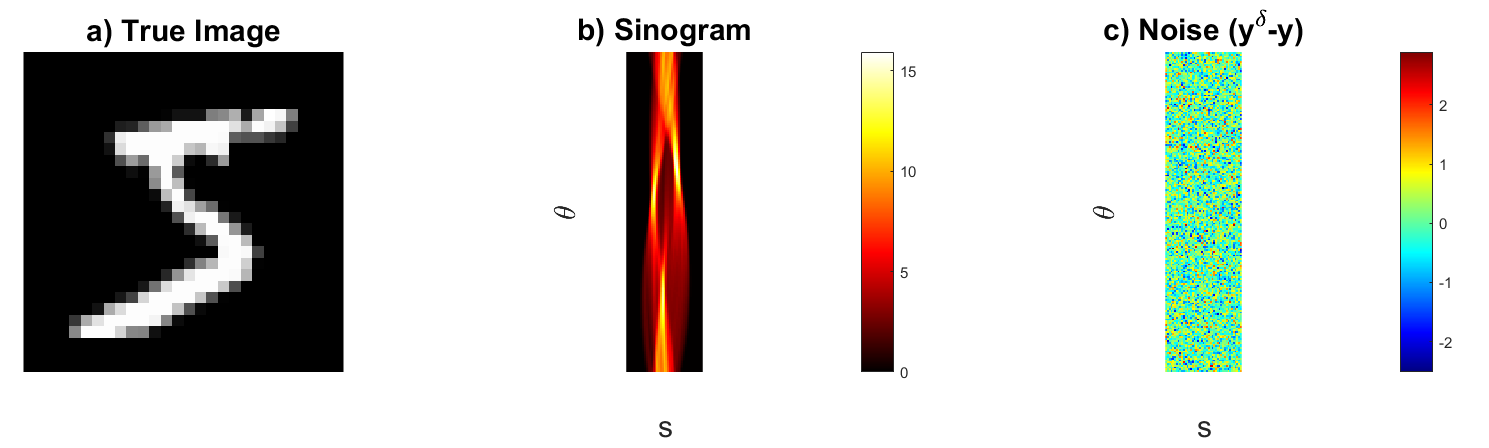}}
	\subfigure{\includegraphics[scale=0.37]{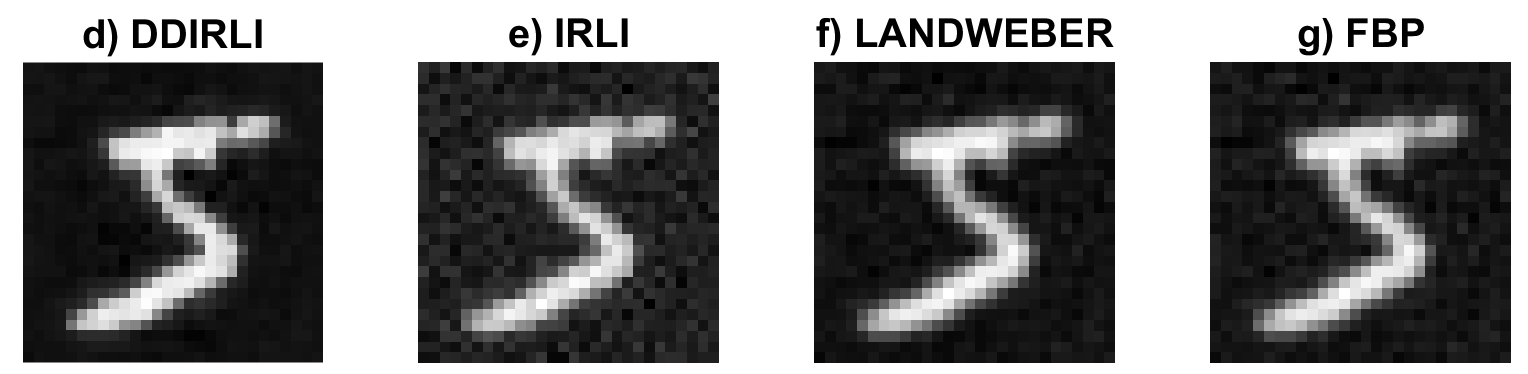}}
	\caption{Test 1. Reconstruction of an image from the training set which has been used to create $A$ in \eqref{eq: def_A_calc}. a) Image to be reconstructed; b) Sinogram $y$ of the true image; c) Plot of the gaussian noise, i.e., $\yd-y$; d)-g) Reconstructions by the different methods. DDIRLI gives a very good reconstruction, with more details compared to IRLI, Landweber and FBP.}\label{fig: test1}
\end{figure}
\newpage
\begin{figure}[!h]
	\centering
	\vspace{1cm}
	\includegraphics[scale=0.4]{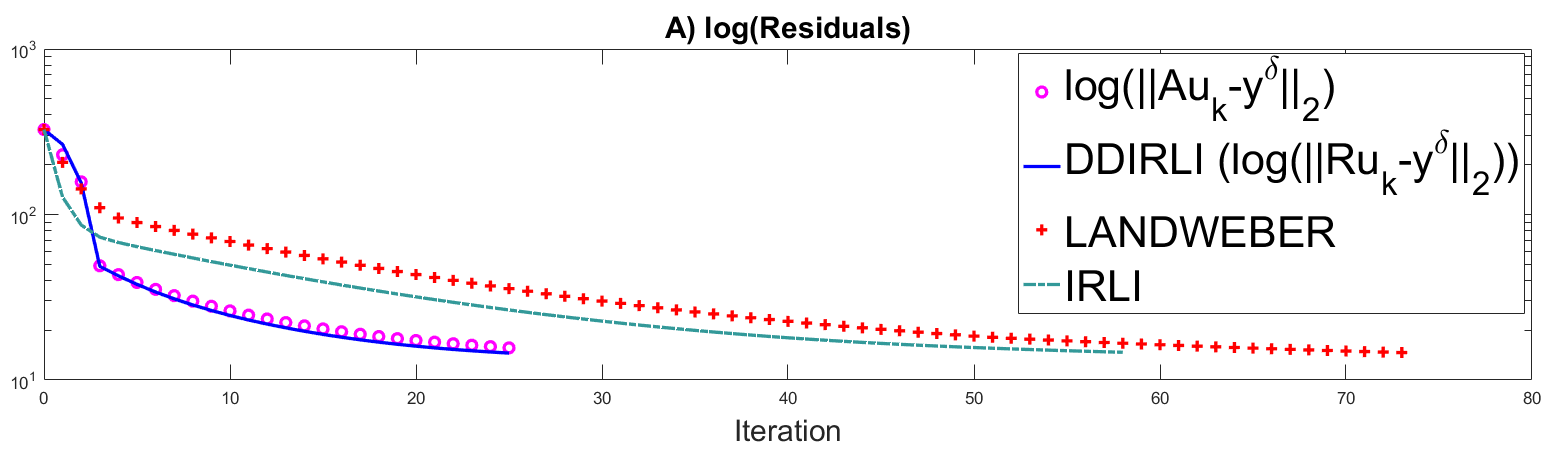}\vspace{1cm}
	\includegraphics[scale=0.4]{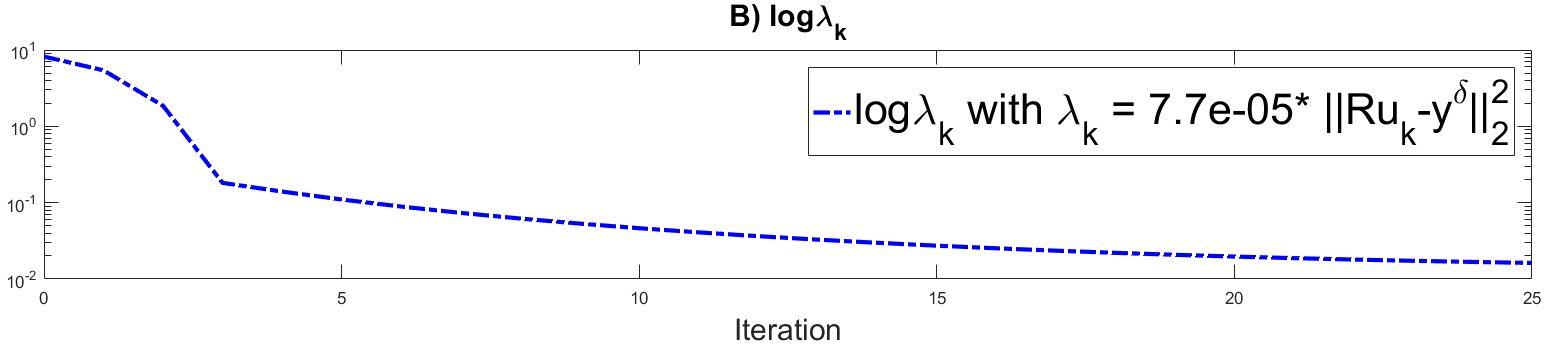}\vspace{1cm}
	\includegraphics[scale=0.4]{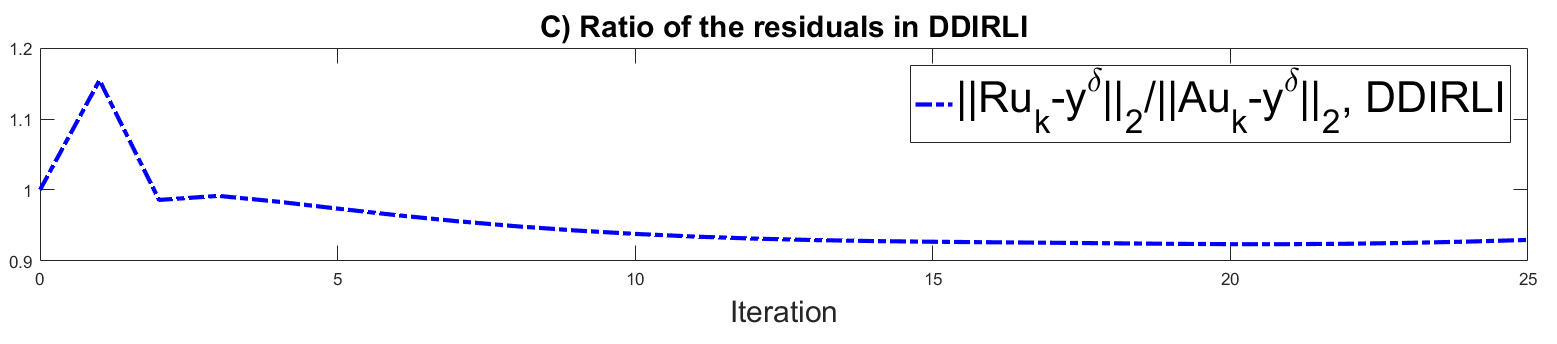}\vspace{1cm}
	\includegraphics[scale=0.4]{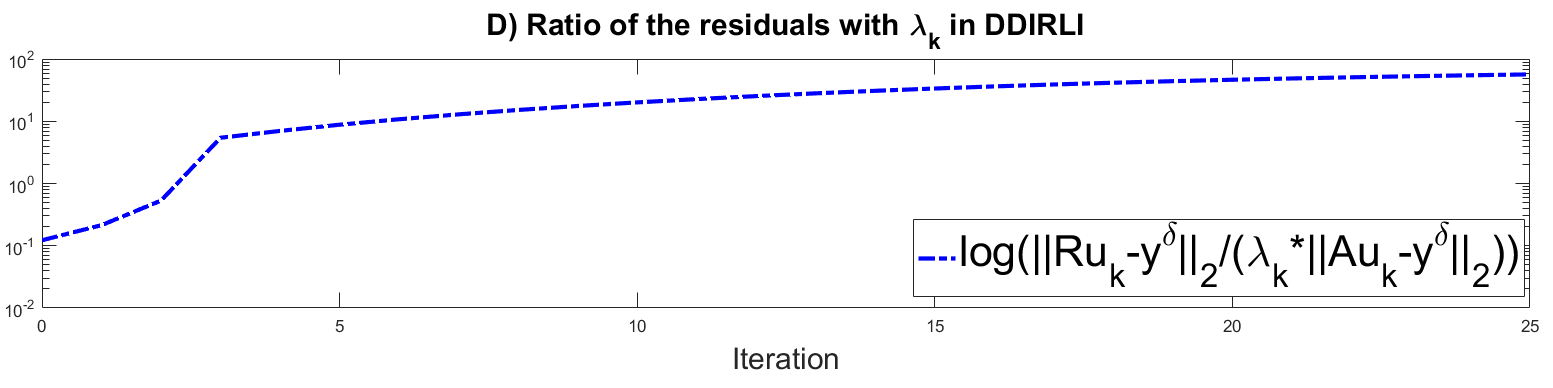}
	\caption{Test 1. A) Residual in logarithmic scale at each step implementing \eqref{eq:ModLand_R}, \eqref{eq:Land_R} and \eqref{eq:IRLI_classic}; B) plot of $\log(\lambda_{k})$; C) plot of the ratio $\norm{Ru_k-\yd}_2/\norm{Au_k-\yd}_2$; D) plot in logarithmic scale of the ratio $\norm{Ru_k-\yd}_2/(\lambda_{k} \norm{Au_k-\yd}_2)$.}\label{fig: test1_2}
\end{figure}

\newpage
{
	\begin{table}[!h]
		\caption{Test 2a. Left part: Parameters used in the test. Right part: some of the results of the test.}\label{tab: table2a}
		\begin{center}
			{\renewcommand{\arraystretch}{1.2}
				\begin{tabular}{|l|c|c|c||c|c|c|}
					\cline{1-7} & & & & & & \vspace{-0.2cm} \\
					Method & $\sigma^2$-noise& $\delta$ & $\tau$ & Iterations & Comp. Time (s) & $\frac{\norm{u_{\textrm{true}} - u_{\textrm{rec}}}_2}{\norm{u_{\textrm{true}}}_2}$\\ \hline  
					{DDIRLI}  &  &  &  & 39 &1,474326   & 0,203051179  \\ \cline{1-1}\cline{5-7}
					{IRLI} & 0.5  & 13,41600448
					& 1.1  & 52 & 1,2946457 & 0,139542084
					\\ \cline{1-1}\cline{5-7} 
					LANDWEBER &  & &  & 67 &1,743846  & 0,14349784
					\\ \cline{1-1}\cline{3-4}\cline{5-7} 
					FBP &  & / & / & / & 0,0273525
					& 0,138023154
					\\ \hline   
				\end{tabular}
			}
		\end{center}   
	\end{table} 
}   
\vspace{2cm}
\begin{figure}[!h]
	\centering
	\includegraphics[scale=0.35]{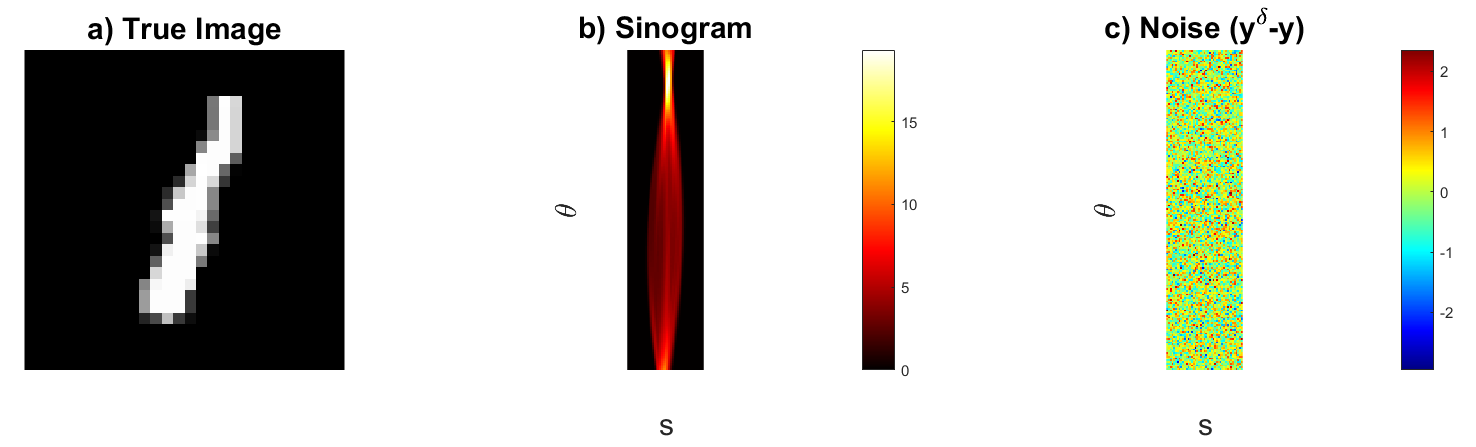}
	\subfigure{\includegraphics[scale=0.37]{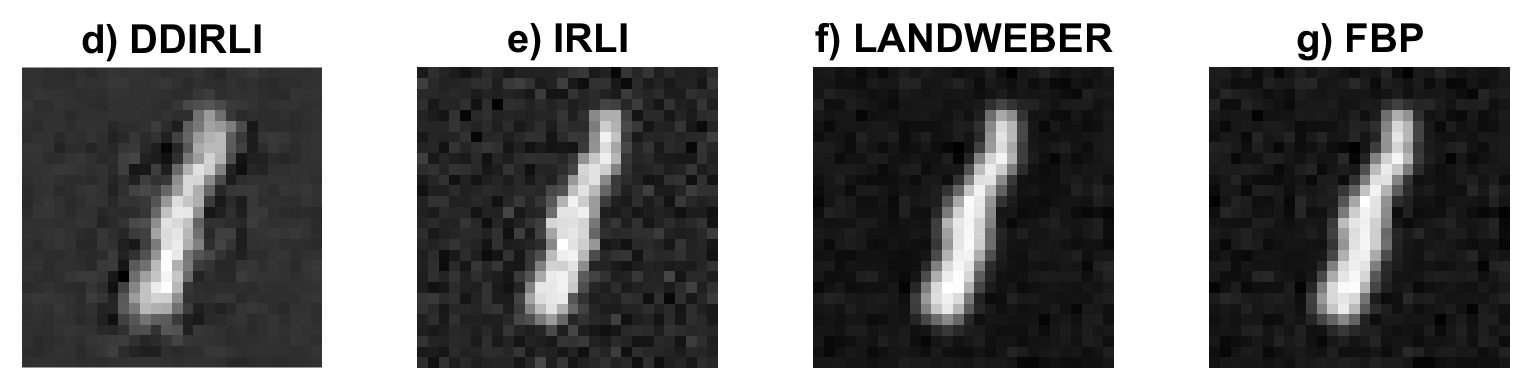}}
	\caption{Test 2a. Reconstruction of an image from the validation set not used to create $A$ in \eqref{eq: def_A_calc}. Only a sample of 50 images from the training set has been used to build $A$. a) Image to be reconstructed; b) Sinogram ($y$) of the true image; c) Plot of the gaussian noise, i.e., $\yd-y$; d)-g) Reconstructions by the different methods.  IRLI, Landweber and FBP give better results than the DDIRLI method. We stress that the discrepancy principle for DDIRLI is satisfied after only 39 iterations.}\label{fig: test2a}
\end{figure}
\newpage
\begin{figure}[!h]
	\centering
	\vspace{1cm}
	\includegraphics[scale=0.4]{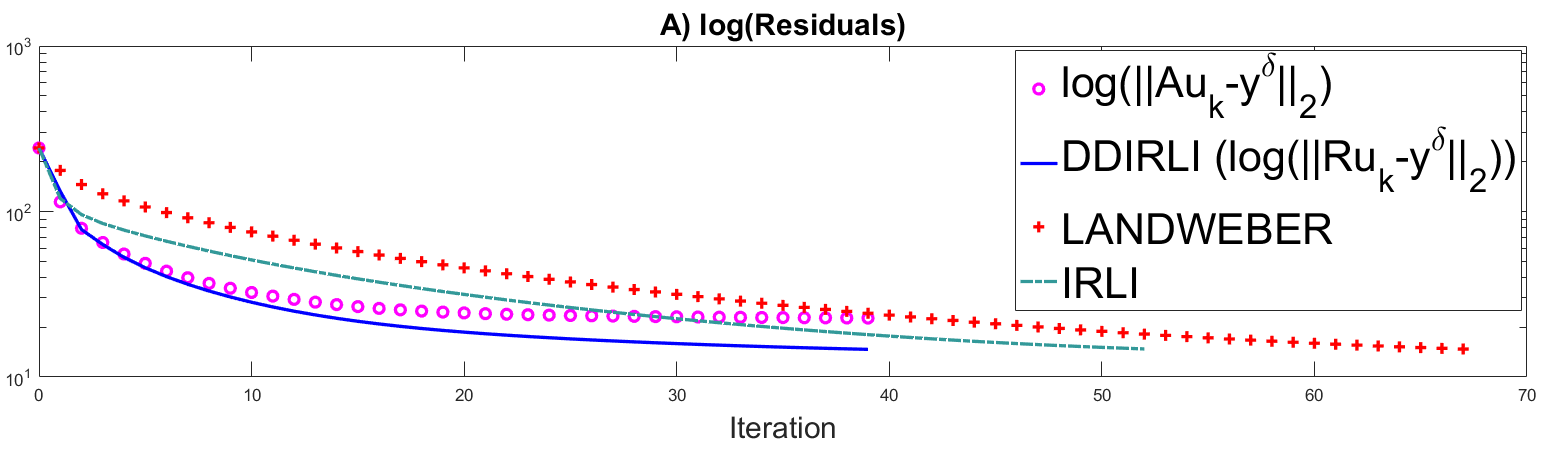}\vspace{1cm}
	\includegraphics[scale=0.4]{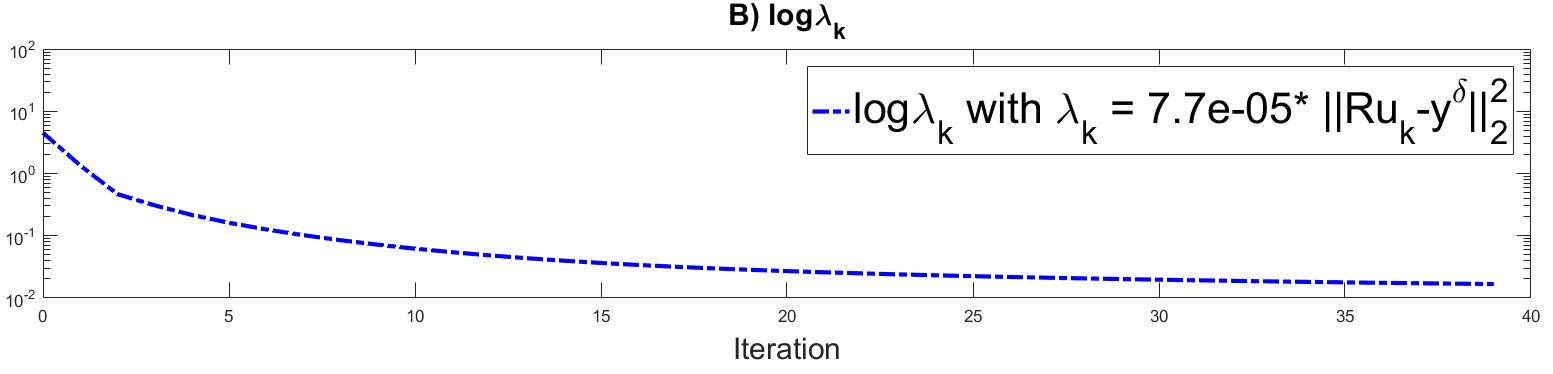}\vspace{1cm}
	\includegraphics[scale=0.4]{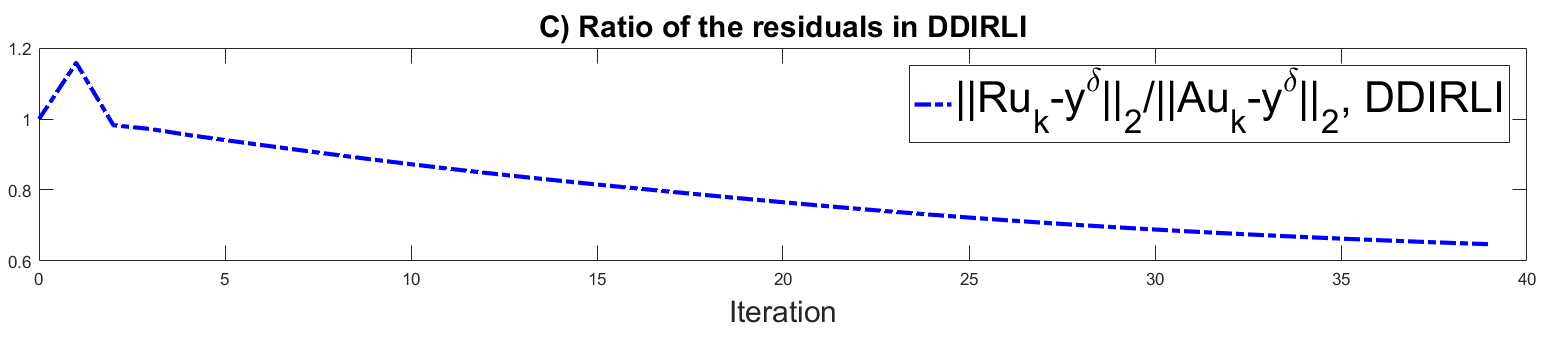}\vspace{1cm}
	\includegraphics[scale=0.4]{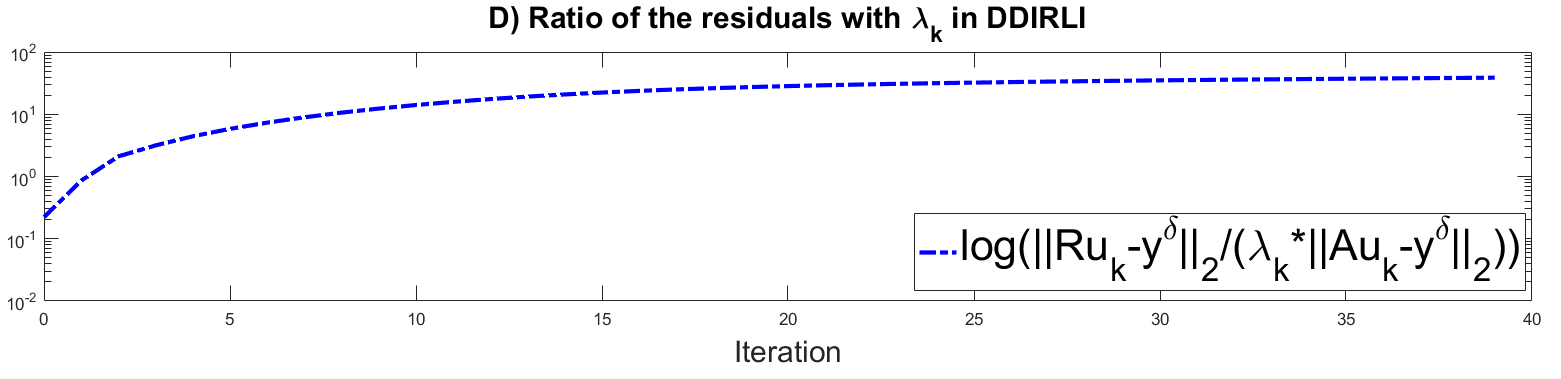}
	\caption{Test 2a. A) Residual in logarithmic scale at each step implementing \eqref{eq:ModLand_R}, \eqref{eq:Land_R} and \eqref{eq:IRLI_classic}; B) plot of $\log(\lambda_{k})$; C) plot of the ratio $\norm{Ru_k-\yd}_2/\norm{Au_k-\yd}_2$; D) plot in logarithmic scale of the ratio $\norm{Ru_k-\yd}_2/(\lambda_{k} \norm{Au_k-\yd}_2)$.}\label{fig: test2a_2}
\end{figure}

\newpage
{
	\begin{table}[!h]
		\caption{Test 2b. Left part: Parameters used in the test. Right part: some of the results of the test.}\label{tab: table2b}
		\begin{center}
			{\renewcommand{\arraystretch}{1.2}
				\begin{tabular}{|l|c|c|c||c|c|c|}
					\cline{1-7} & & & & & & \vspace{-0.2cm} \\
					Method & $\sigma^2$-noise& $\delta$ & $\tau$ & Iterations & Comp. Time (s) & $\frac{\norm{u_{\textrm{true}} - u_{\textrm{rec}}}_2}{\norm{u_{\textrm{true}}}_2}$\\ \hline  
					{DDIRLI} &  &  &  & 24 & 1,0422007 & 0,161019463
					\\ \cline{1-1}\cline{5-7}
					{IRLI} & 0.5  & 13,71978428	& 1.1  & 53 & 1,3471281 & 0,135825795
					\\ \cline{1-1}\cline{5-7} 
					LANDWEBER &  & &  & 66 &1,5579816  & 0,142645348
					\\ \cline{1-1}\cline{3-4}\cline{5-7} 
					FBP &  & / & / & / & 0,0419196
					& 0,135176876
					\\ \hline   
				\end{tabular}
			}
		\end{center}   
	\end{table} 
}
\vspace{2cm}
\begin{figure}[!h]
	\centering
	\includegraphics[scale=0.45]{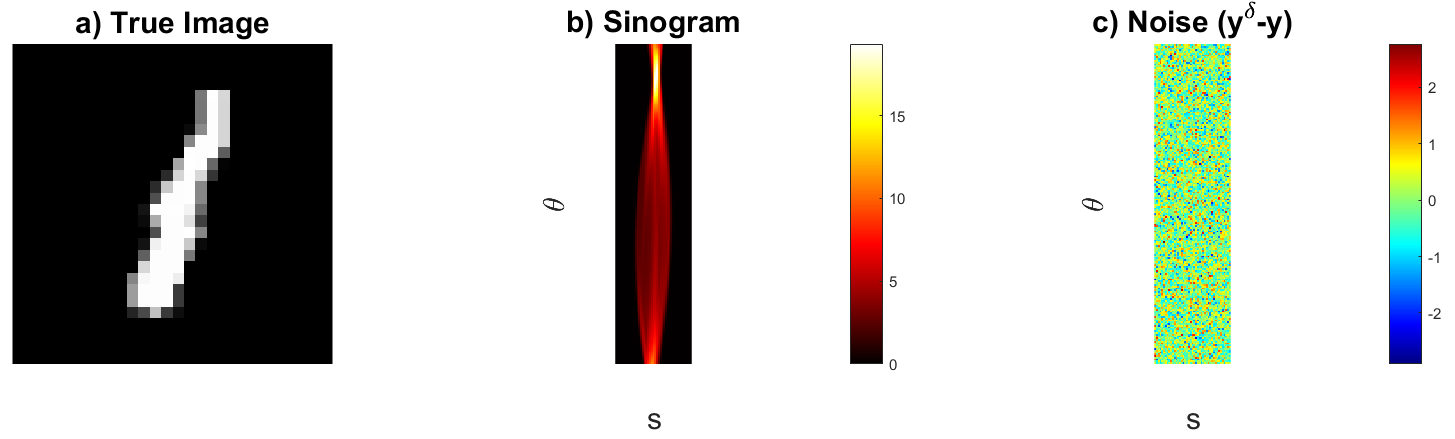}
	\subfigure{\includegraphics[scale=0.47]{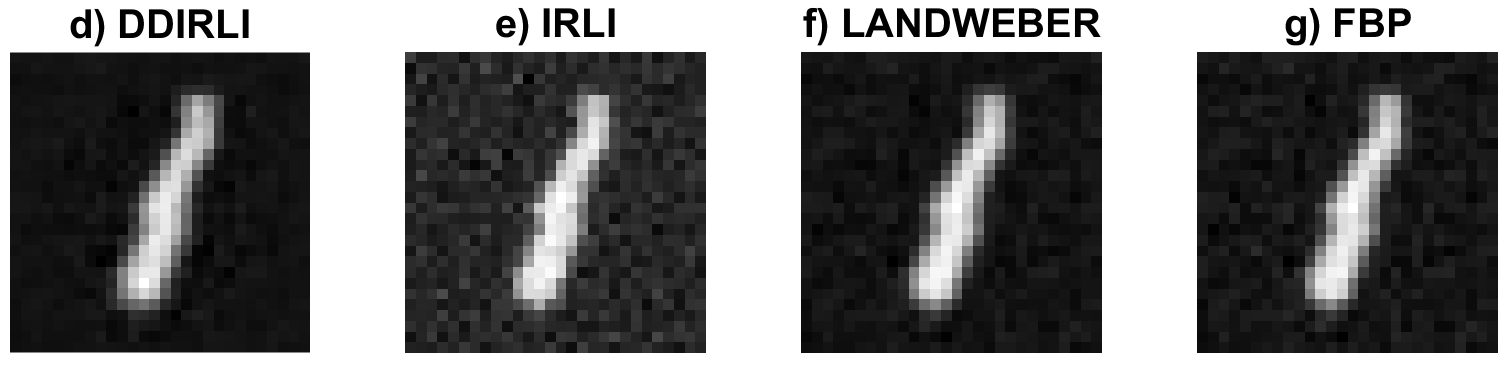}}
	\caption{Test 2b. Reconstruction of an image from the validation set not used to create $A$ in \eqref{eq: def_A_calc}. Only a sample of 150 images from the training set has been used to build $A$. a) Image to be reconstructed; b) Sinogram ($y$) of the true image; c) Plot of the gaussian noise, i.e., $\yd-y$; d)-g) Reconstructions by the different methods.  Reconstructions are all comparable. We stress that the discrepancy principle for DDIRLI is satisfied after only 24 iterations.}\label{fig: test2b}
\end{figure}
\newpage
\begin{figure}[!h]
	\centering
	\vspace{1cm}
	\includegraphics[scale=0.5]{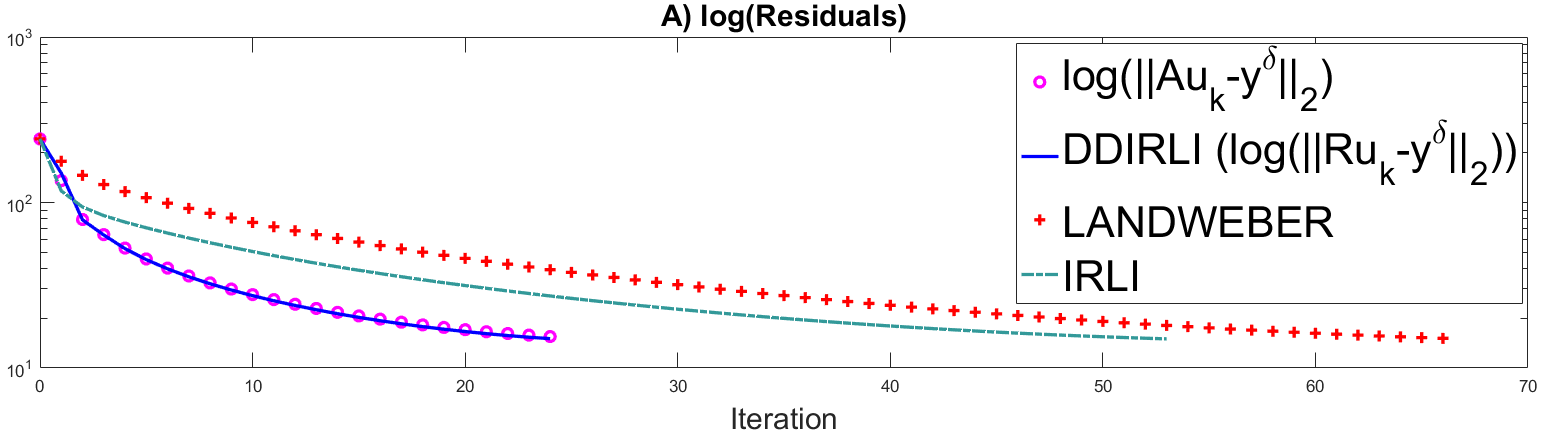}\vspace{1cm}
	\includegraphics[scale=0.5]{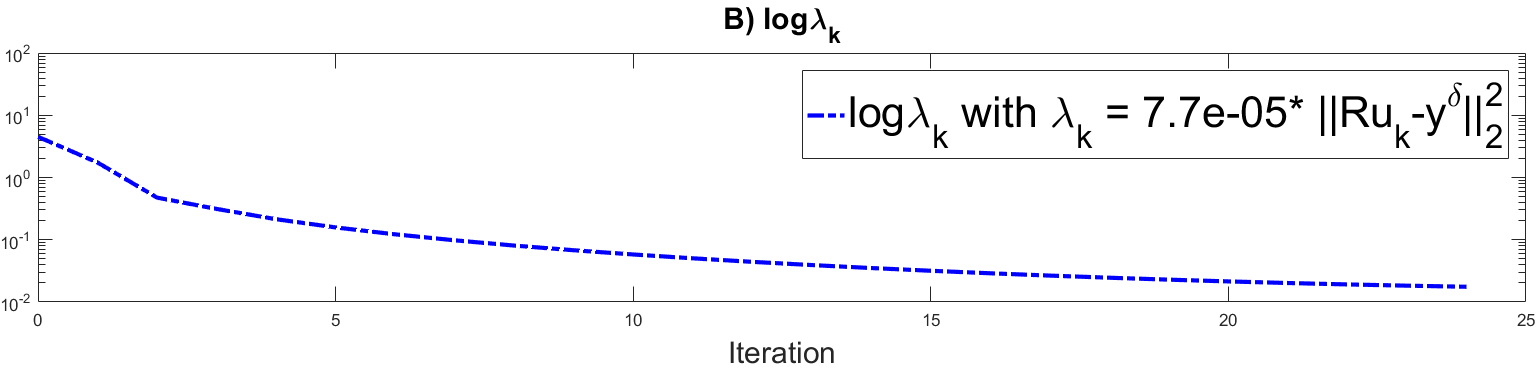}\vspace{1cm}
	\includegraphics[scale=0.5]{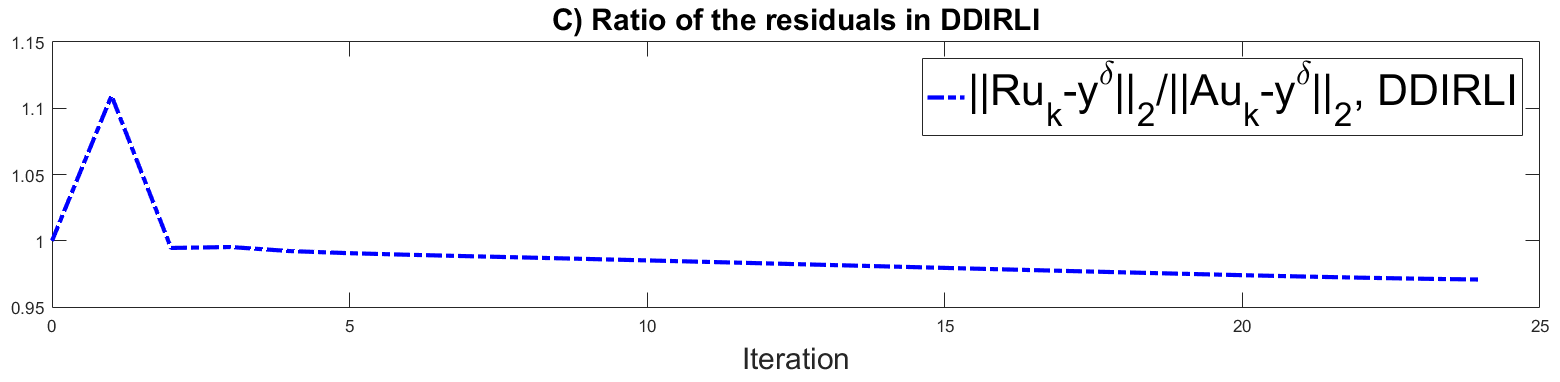}\vspace{1cm}
	\includegraphics[scale=0.5]{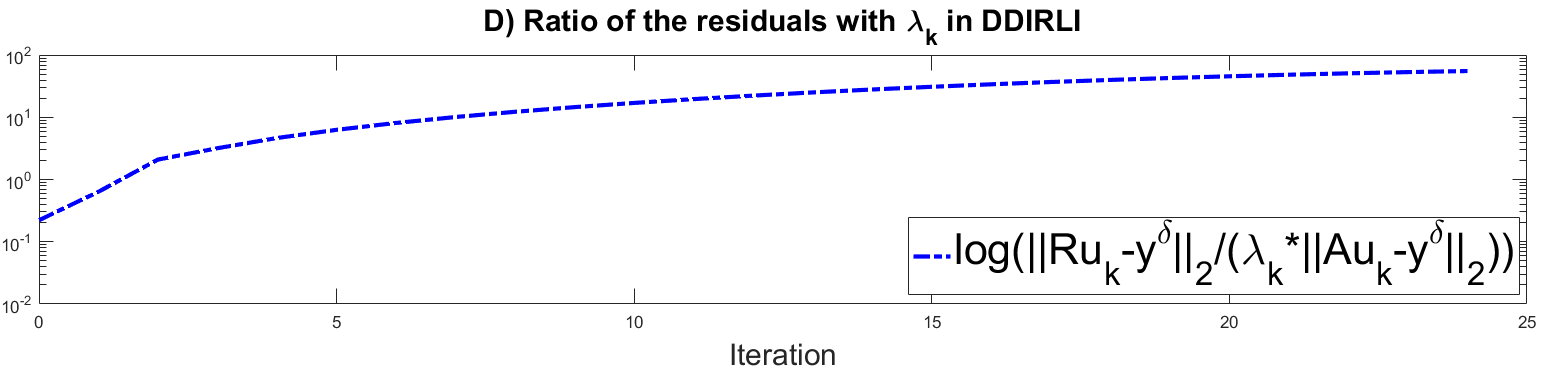}
	\caption{Test 2b. A) Residual in logarithmic scale at each step implementing \eqref{eq:ModLand_R}, \eqref{eq:Land_R} and \eqref{eq:IRLI_classic}; B) plot of $\log(\lambda_{k})$; C) plot of the ratio $\norm{Ru_k-\yd}_2/\norm{Au_k-\yd}_2$; D) plot in logarithmic scale of the ratio $\norm{Ru_k-\yd}_2/(\lambda_{k} \norm{Au_k-\yd}_2)$.}\label{fig: test2b_2}
\end{figure}

\newpage
{
	\begin{table}[!h]
		\caption{Test 3a. Left part: Parameters used in the test. Right part: some of the results of the test.}\label{tab: table3a}
		\begin{center}
			{\renewcommand{\arraystretch}{1.2}
				\begin{tabular}{|l|c|c|c||c|c|c|}
					\cline{1-7} & & & & & & \vspace{-0.2cm} \\
					Method & $\sigma^2$-noise& $\delta$ & $\tau$ & Iterations & Comp. Time (s) & $\frac{\norm{u_{\textrm{true}} - u_{\textrm{rec}}}_2}{\norm{u_{\textrm{true}}}_2}$\\ \hline  
					{DDIRLI} &  &  &  & 28 & 1,0690521
					& 0,259157333
					\\ \cline{1-1}\cline{5-7}
					{IRLI} & 0.03  & 2,863089254 & 5  & 56 & 1,1251772 & 0,236613193
					\\ \cline{1-1}\cline{5-7} 
					LANDWEBER &  & &  & 90 &1,8927882  & 0,312031414
					\\ \cline{1-1}\cline{3-4}\cline{5-7} 
					FBP &  & / & / & / & 0,024378
					& 0,475620179
					\\ \hline   
				\end{tabular}
			}
		\end{center}   
	\end{table} 
}
\vspace{2cm}
\begin{figure}[!h]
	\centering
	\includegraphics[scale=0.35]{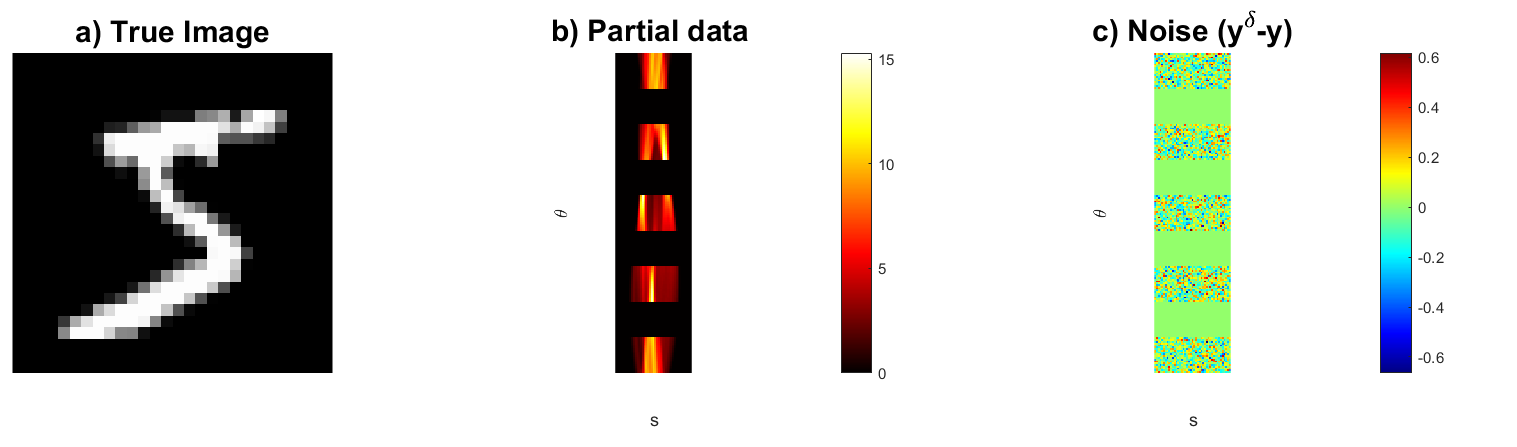}	\subfigure{\includegraphics[scale=0.37]{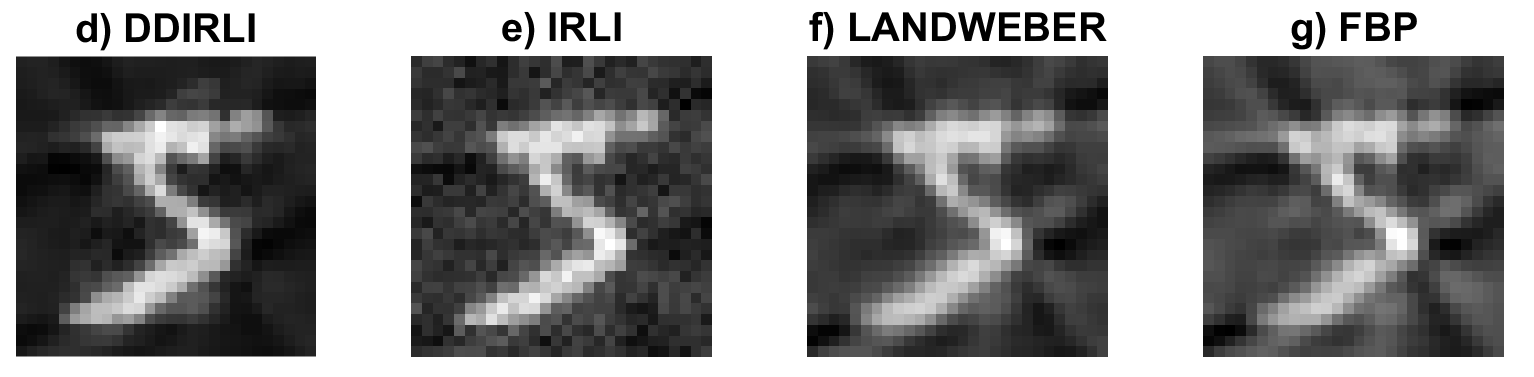}}
	\caption{Test 3a. Reconstruction of an image from the training set, which has been used to create $A$, from partial data. a) Image to be reconstructed; b) Plot of the cropped sinogram of the true image, i.e., $y$; c) Plot of the gaussian noise, i.e., $\yd-y$; d)-g) Reconstructions by the different methods. DDIRLI and IRLI provide better reconstructions than Landweber and FBP.}\label{fig: test3a}
\end{figure}
\newpage
\begin{figure}[!h]
	\centering
	\vspace{1cm}
	\includegraphics[scale=0.4]{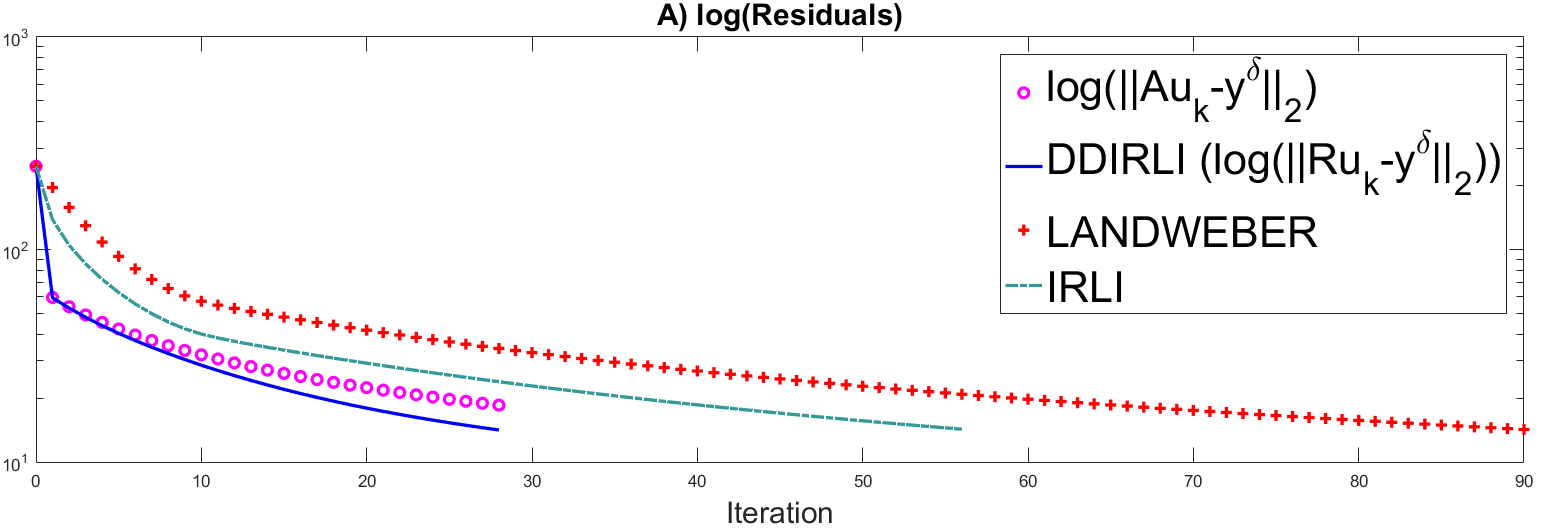}\vspace{1cm}
	\includegraphics[scale=0.4]{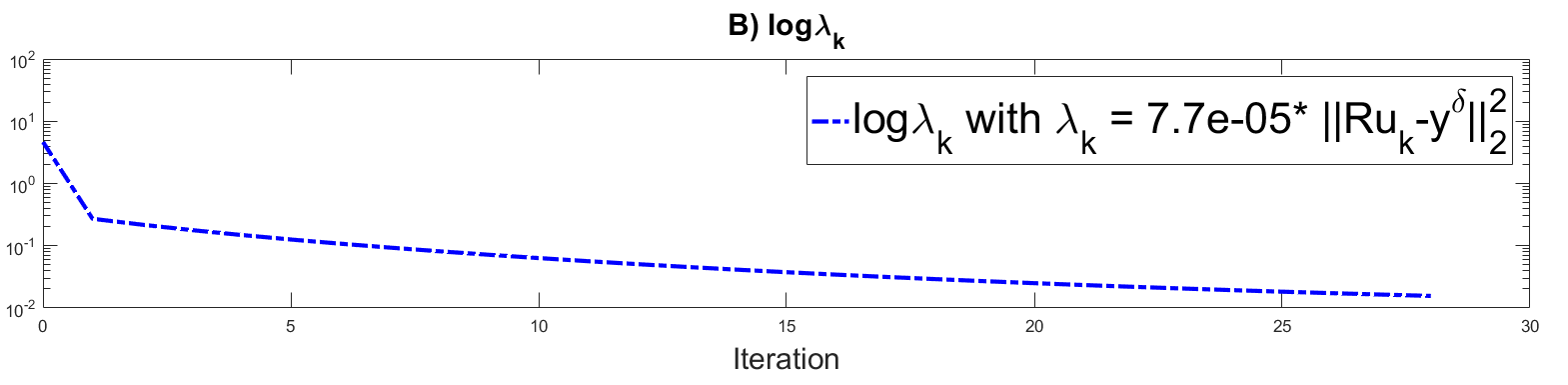}\vspace{1cm}
	\includegraphics[scale=0.4]{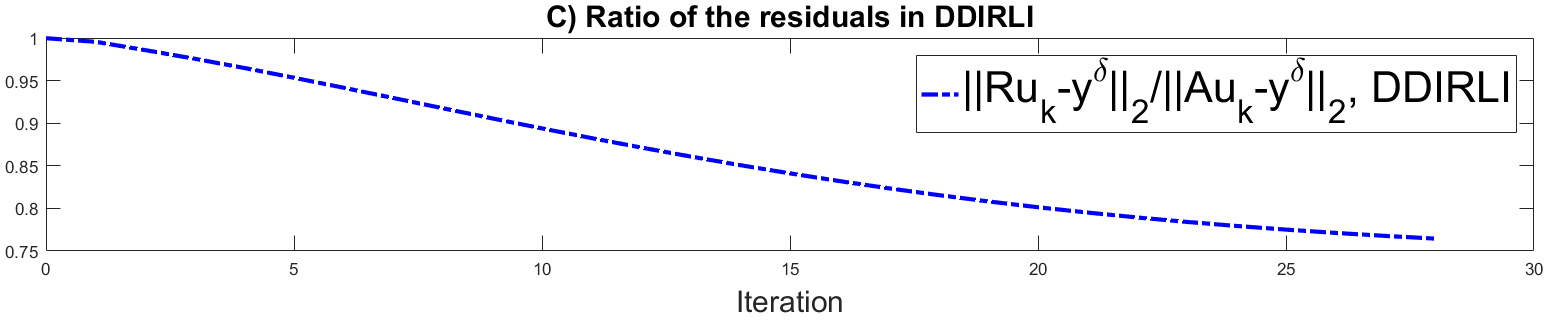}\vspace{1cm}
	\includegraphics[scale=0.4]{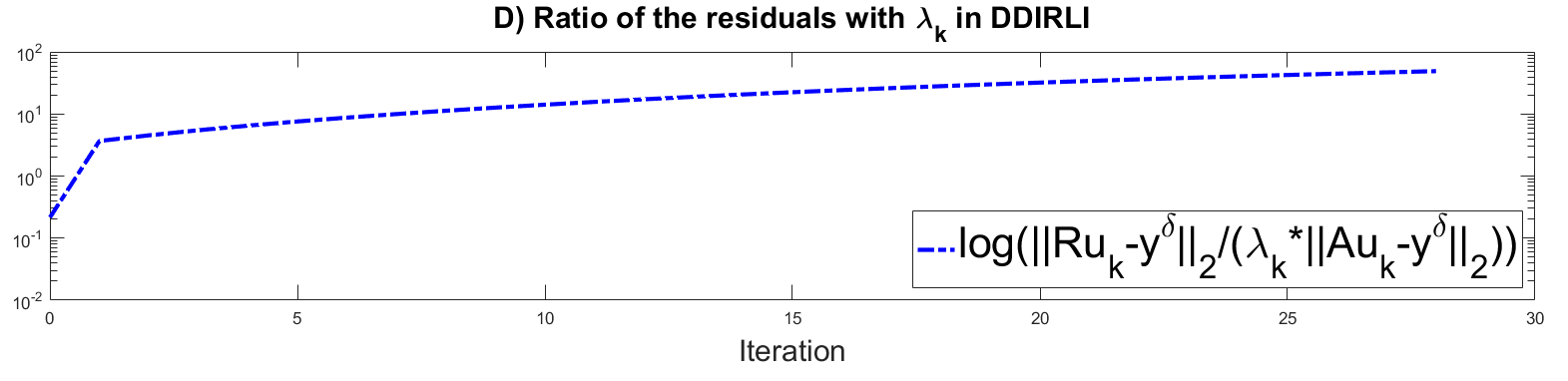}
	\caption{Test 3a. A) Residual in logarithmic scale at each step implementing \eqref{eq:ModLand_R}, \eqref{eq:Land_R} and \eqref{eq:IRLI_classic}; B) plot of $\log(\lambda_{k})$; C) plot of the ratio $\norm{Ru_k-\yd}_2/\norm{Au_k-\yd}_2$; D) plot in logarithmic scale of the ratio $\norm{Ru_k-\yd}_2/(\lambda_{k} \norm{Au_k-\yd}_2)$.}\label{fig: test3_2a}
\end{figure}

\newpage
{
	\begin{table}[!h]
		\caption{Test 3b. Left part: Parameters used in the test. Right part: some of the results of the test.}\label{tab: table3b}
		\begin{center}
			{\renewcommand{\arraystretch}{1.2}
				\begin{tabular}{|l|c|c|c||c|c|c|}
					\cline{1-7} & & & & & & \vspace{-0.2cm} \\
					Method & $\sigma^2$-noise& $\delta$ & $\tau$ & Iterations & Comp. Time (s) & $\frac{\norm{u_{\textrm{true}} - u_{\textrm{rec}}}_2}{\norm{u_{\textrm{true}}}_2}$\\ \hline  
					{DDIRLI} &  &  &  & 29 & 1,1999311
					& 0,270013425
					\\ \cline{1-1}\cline{5-7}
					{IRLI} & 0.03  & 2,760643462
					& 5  & 55 & 1,1808313
					& 0,240562351
					\\ \cline{1-1}\cline{5-7} 
					LANDWEBER &  & &  & 80 &1,654519  & 0,289219401
					\\ \cline{1-1}\cline{3-4}\cline{5-7} 
					FBP &  & / & / & / & 0,0210105
					& 0,427565297
					\\ \hline   
				\end{tabular}
			}
		\end{center}   
	\end{table} 
}
\vspace{2cm}
\begin{figure}[!h]
	\centering
	\includegraphics[scale=0.35]{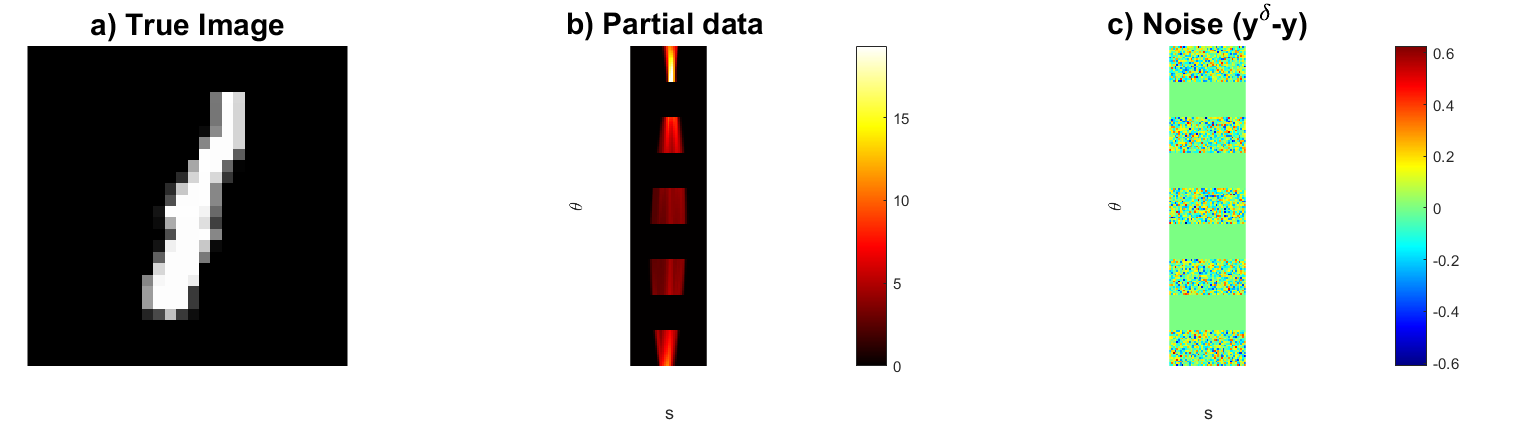}	\subfigure{\includegraphics[scale=0.37]{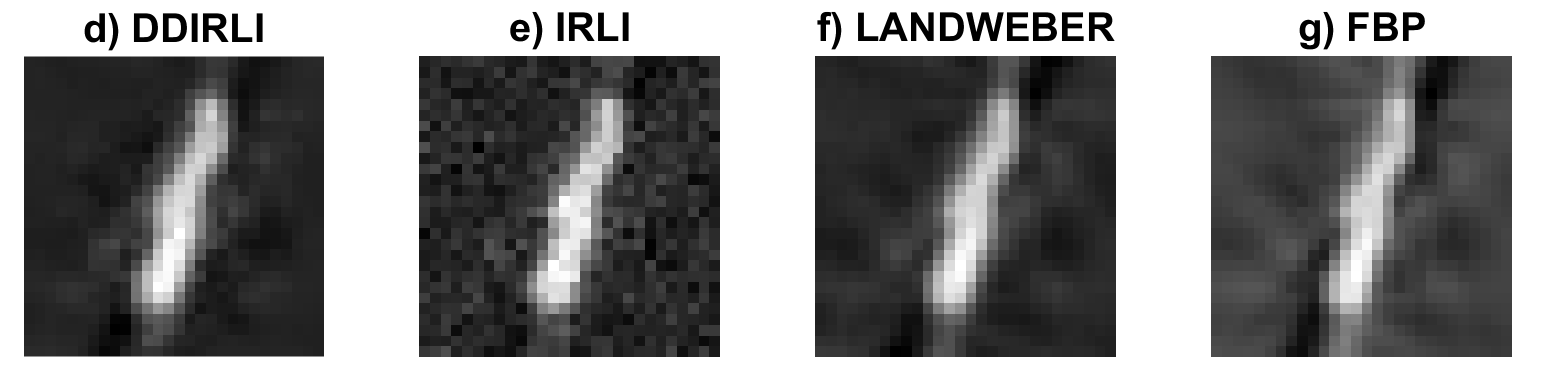}}
	\caption{Test 3b. Reconstruction of an image from the validation set not used to create $A$ in \eqref{eq: def_A_calc}, from partial data. We used the same $150$ images of Test 2b to create $A$. a) Image to be reconstructed; b) Plot of the cropped sinogram of the true image, i.e., $y$; c) Plot of the gaussian noise, i.e., $\yd-y$; d)-g) Reconstructions by the different methods.}\label{fig: test3b}
\end{figure}
\newpage
\begin{figure}[!h]
	\centering
	\vspace{1cm}
	\includegraphics[scale=0.4]{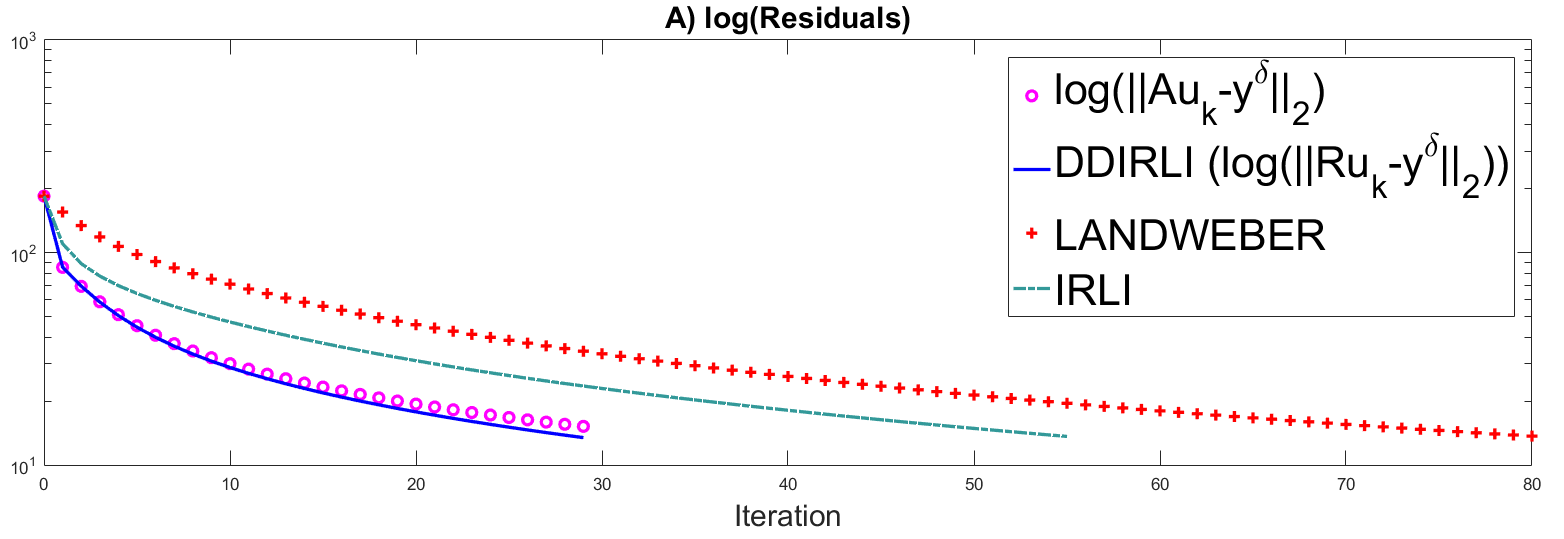}\vspace{1cm}
	\includegraphics[scale=0.4]{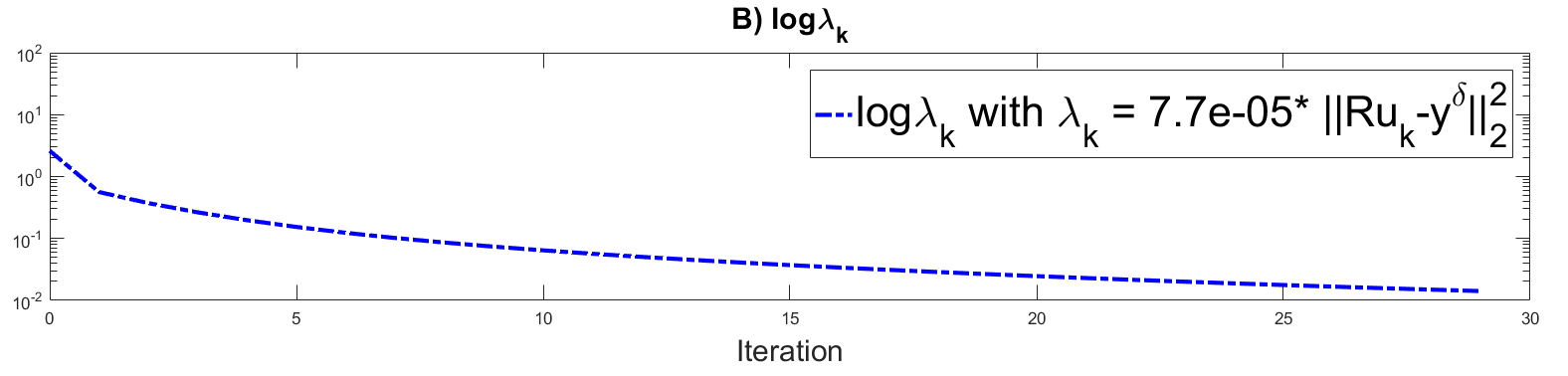}\vspace{1cm}
	\includegraphics[scale=0.4]{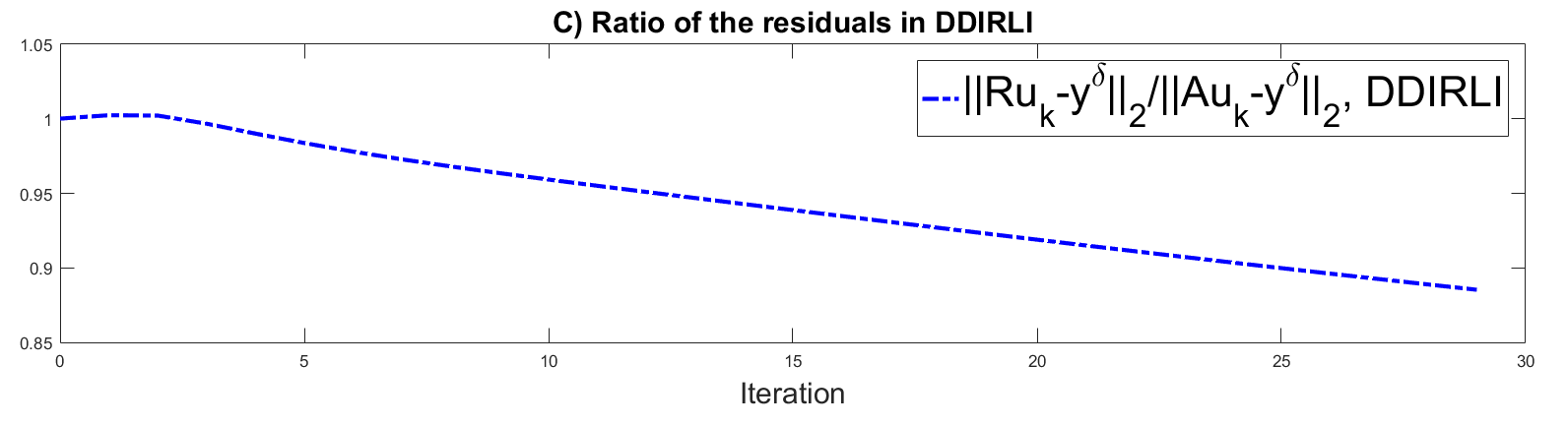}\vspace{1cm}
	\includegraphics[scale=0.4]{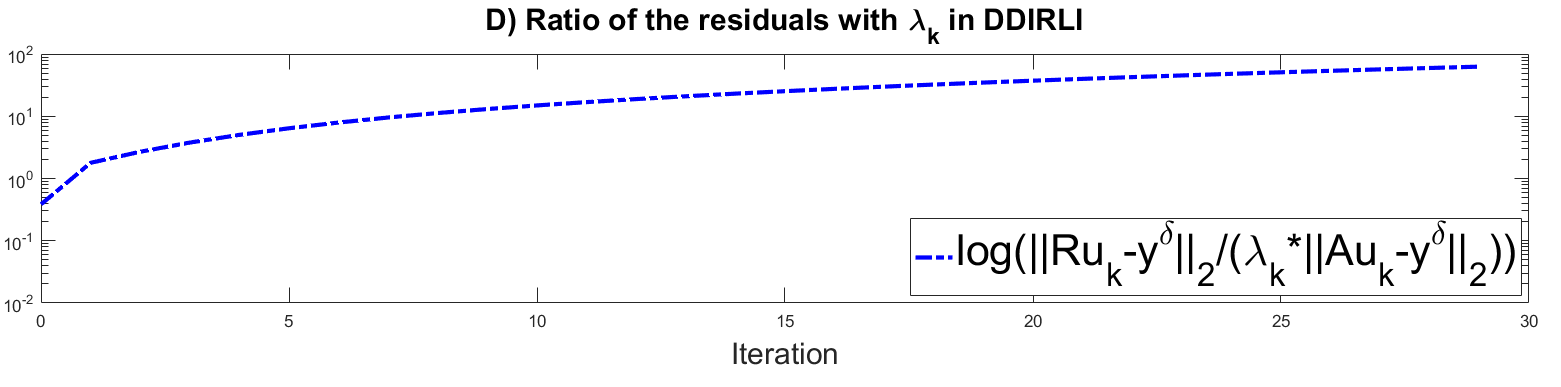}
	\caption{Test 3b. A) Residual in logarithmic scale at each step implementing \eqref{eq:ModLand_R}, \eqref{eq:Land_R} and \eqref{eq:IRLI_classic}; B) plot of $\log(\lambda_{k})$; C) plot of the ratio $\norm{Ru_k-\yd}_2/\norm{Au_k-\yd}_2$; D) plot in logarithmic scale of the ratio $\norm{Ru_k-\yd}_2/(\lambda_{k} \norm{Au_k-\yd}_2)$.}\label{fig: test3_2b}
\end{figure}

\newpage
{
	\begin{table}[!h]
		\caption{Test 4. Left part: Parameters used in the test. Right part: some of the results of the test.}\label{tab: table4}
		\begin{center}
			{\renewcommand{\arraystretch}{1.2}
				\begin{tabular}{|l|c|c|c||c|c|c|}
					\cline{1-7} & & & & & & \vspace{-0.2cm} \\
					Method & $\sigma^2$-noise& $\delta$ & $\tau$ & Iterations & Comp. Time (s) & $\frac{\norm{u_{\textrm{true}} - u_{\textrm{rec}}}_2}{\norm{u_{\textrm{true}}}_2}$\\ \hline  
					{DDIRLI} &  &  &  & 26 & 1,4472282
					& 0,399702146
					\\ \cline{1-1}\cline{5-7}
					{IRLI} & 0.5  & 13,56737965
					& 1.1  & 34 & 0,6318276
					& 0,323027126
					\\ \cline{1-1}\cline{5-7} 
					LANDWEBER &  & &  & 56 &1,0820124  & 0,410787776
					\\ \cline{1-1}\cline{3-4}\cline{5-7} 
					FBP &  & / & / & / & 0,0206258
					& 0,382431698
					\\ \hline   
				\end{tabular}
			}
		\end{center}   
	\end{table} 
}
\vspace{2cm}
\begin{figure}[!h]
	\centering
	\includegraphics[scale=0.35]{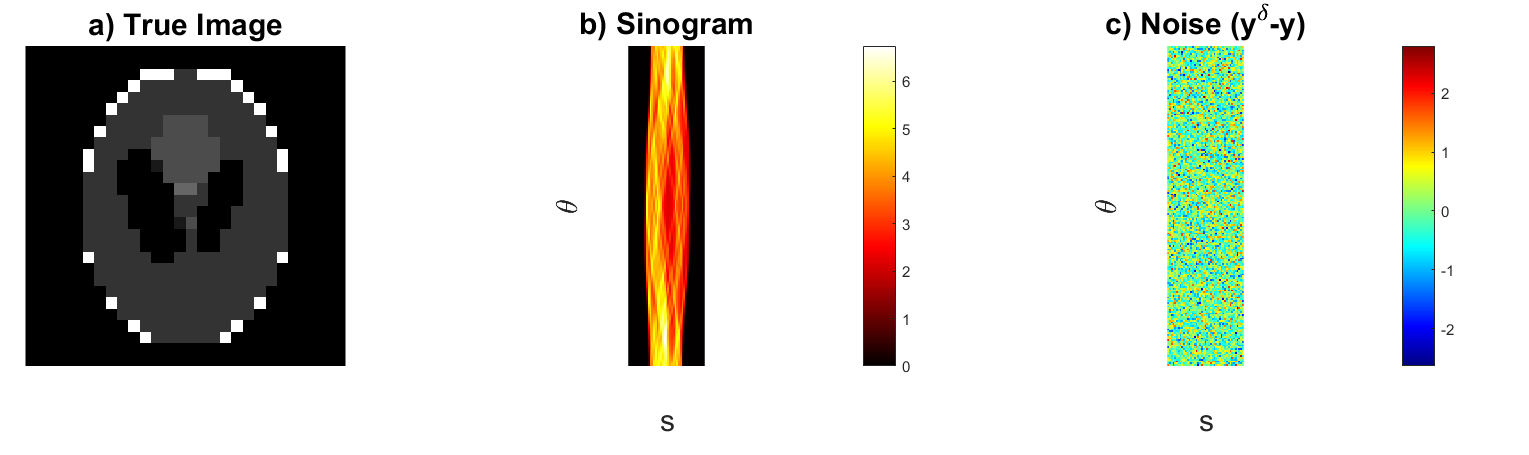}	\subfigure{\includegraphics[scale=0.37]{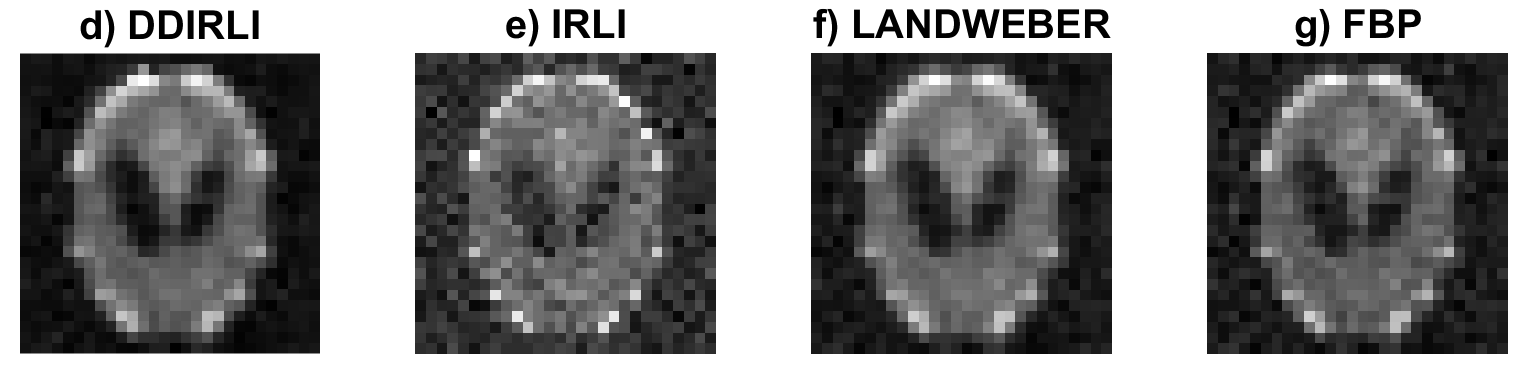}}
	\caption{Test 4. Reconstruction of the Shepp Logan phantom. An amount of $600$ images from the training set have been used to create $A$. a) Image to be reconstructed; b) Plot of the cropped sinogram of the true image, i.e., $y$; c) Plot of the gaussian noise, i.e., $\yd-y$; d)-g) Reconstructions by the different methods.}\label{fig: test4a}
\end{figure}
\newpage
\begin{figure}[!h]
	\centering
	\vspace{1cm}
	\includegraphics[scale=0.4]{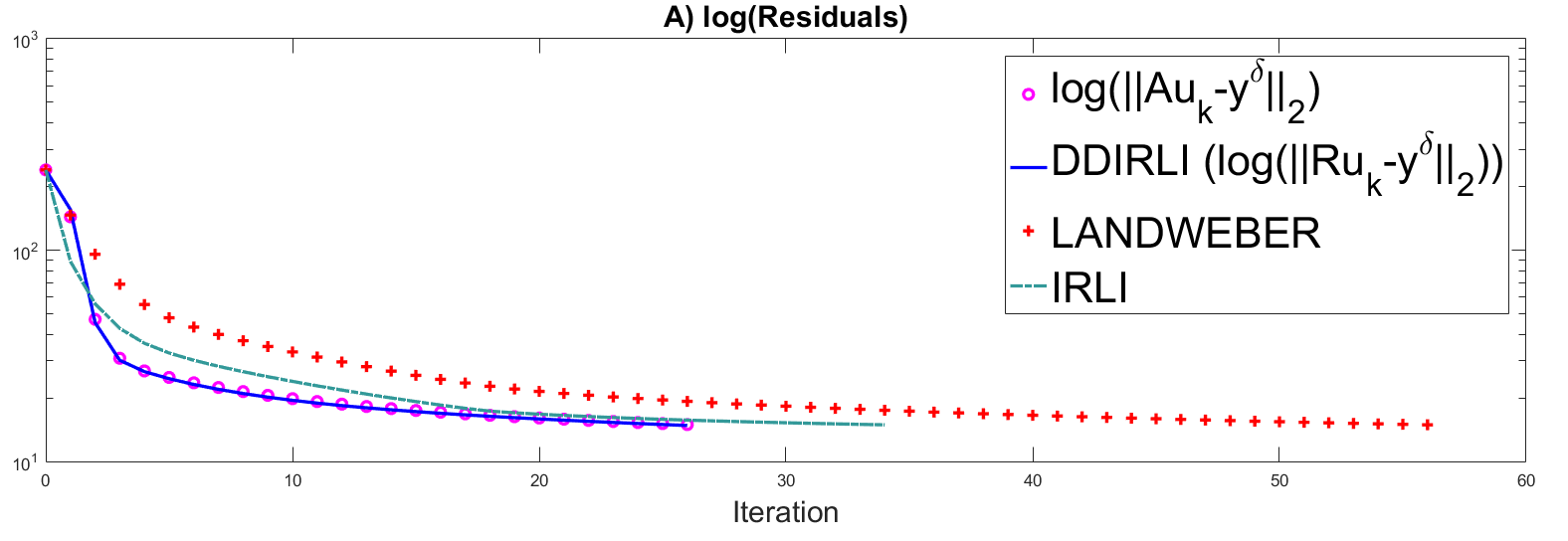}\vspace{1cm}
	\includegraphics[scale=0.4]{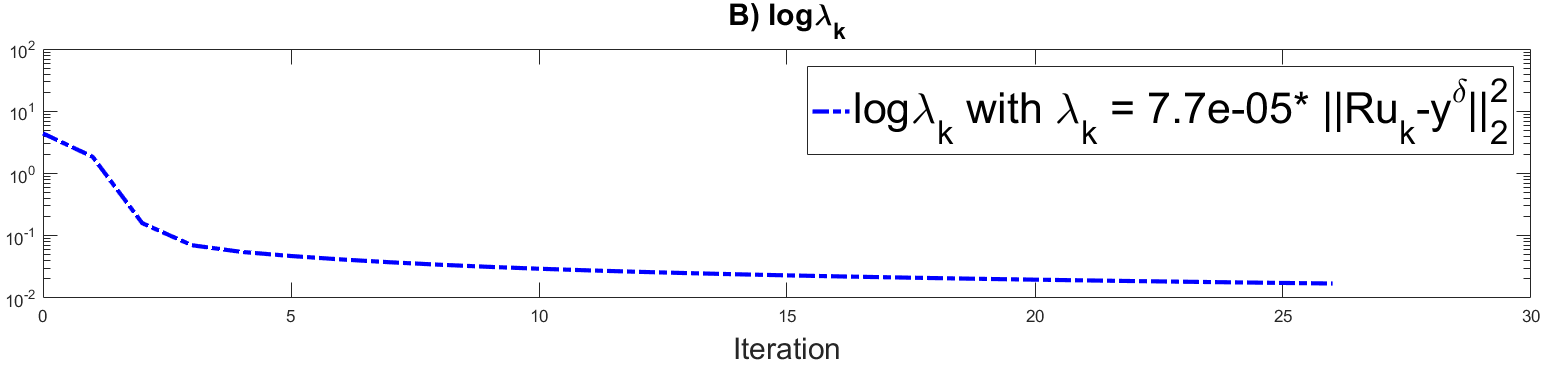}\vspace{1cm}
	\includegraphics[scale=0.4]{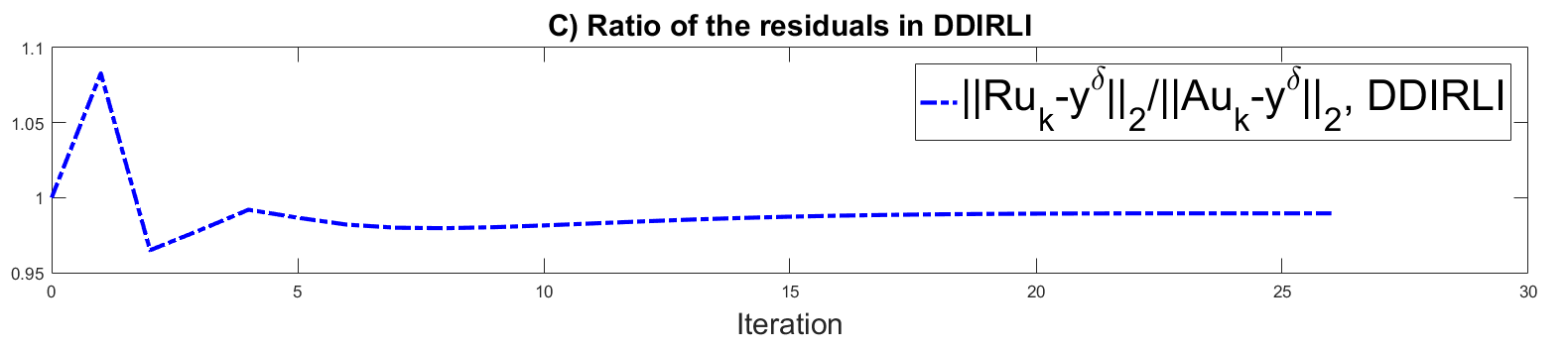}\vspace{1cm}
	\includegraphics[scale=0.4]{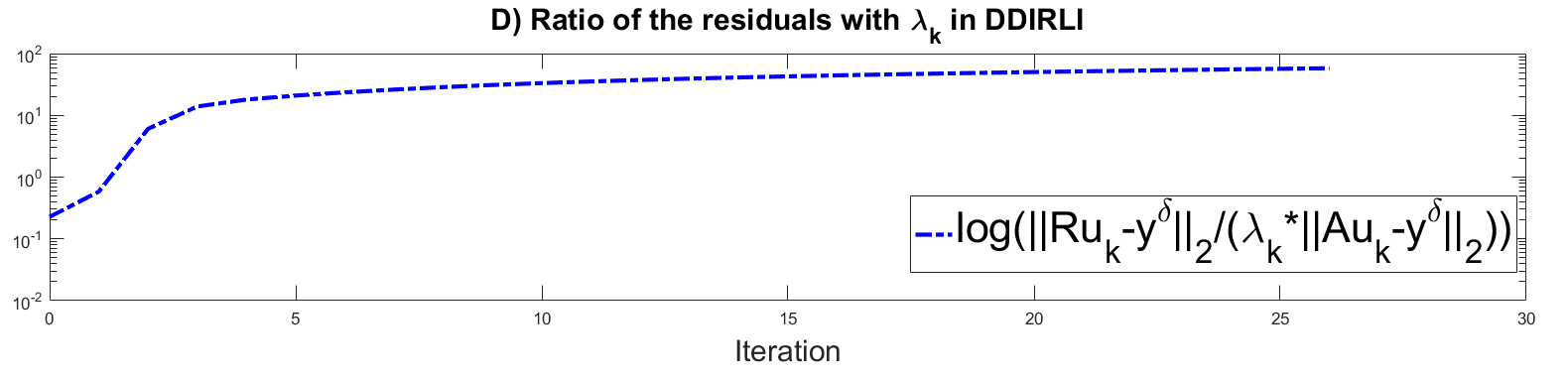}
	\caption{Test 4. A) Residual in logarithmic scale at each step implementing \eqref{eq:ModLand_R}, \eqref{eq:Land_R} and \eqref{eq:IRLI_classic}; B) plot of $\log(\lambda_{k})$; C) plot of the ratio $\norm{Ru_k-\yd}_2/\norm{Au_k-\yd}_2$; D) plot in logarithmic scale of the ratio $\norm{Ru_k-\yd}_2/(\lambda_{k} \norm{Au_k-\yd}_2)$.}\label{fig: test4b}
\end{figure}

\subsection{Nonlinear Operator - Schlieren Model}
We briefly recall the mathematical model behind the Schlieren tomography, which we take as prototype of a nonlinear problem. For the physical model, i.e., the Schlieren optical system and details on data acquisition see, e.g., \cite{ChaPel95,Kow08,PitGreenYuKin94}.

Let $\eta_i\in S^1$, for $i=0,\cdots, l-1$ be a set of recording directions, that is 
\begin{equation*}
\eta_i=\eta(\theta_i)=(\cos\theta_i,\sin\theta_i),
\end{equation*}
where $\theta_i\in[0,\pi)$, for $i=0,\cdots,l-1$, is the angle.	
Taking $B_1(0)\subset \mathbb{R}^2$, we define the Schlieren operator in the direction of $\eta_i$ as the square of the Radon transform, i.e., 
\begin{equation*}
F_i(u):=R^2_i(u), \qquad i=0,\cdots, l-1,
\end{equation*}
where $R_i$ is the Radon transform along the direction $\eta_i$. 
Physically speaking, the function $u$ denotes a pressure.
To reconstruct $u$, we need to solve the system of equations
\begin{equation}\label{eq: oper_F_Schlieren}
F(u):=(F_0(u),\cdots,F_{l-1}(u))=\yd:=(\yd_0,\cdots,\yd_{l-1}).
\end{equation}
It was shown in \cite{HalKowLeitSch07} that each operator $F_i$, for $i=0,\cdots, l-1$ is continuous and Fr\'echet differentiable from $H^1_0(B_1(0))$ to $L^2([-1,1])$ and 
\begin{equation*}
F'_i(u)h=2R_i(u)R_i(h), \qquad \forall h\in H^1_0(B_1(0)).
\end{equation*}  
The adjoint of $F'_i(u)$ is given by 
\begin{equation*}
\begin{aligned}
F'_i(u)^*: L^2([-1,1])&\to H^1_0(B_1(0))\\
g & \to f(g)
\end{aligned}
\end{equation*}
with $f(g)$ the solution to 
\begin{equation}\label{eq: adj_Schlieren}
(I-\Delta)f(g)=2R^*_i(R_i(u)g),
\end{equation}
where $I$ is the identity operator, $\Delta$ is the Laplace operator on $H^1_0(B_1(0))$ and $R^*_i$ denotes the adjoint of $R_i$ as operator from $L^2(B_1(0))$ to $L^2([-1,1])$, i.e,
\begin{equation*}
\begin{aligned}
R^*_i: L^2([-1,1])&\to L^2([B_1(0)])\\
w(x) &\to w(\langle \eta_i, x\rangle). 
\end{aligned}
\end{equation*} 
We stress that all the numerical simulations of this section are performed without the smoothing operator $(I-\Delta)$, contained in \eqref{eq: adj_Schlieren}, since both the iteratively regularized Landweber schemes and the Landweber iteration are regularizing.

Therefore, the {data driven} iteratively regularized Landweber iteration ({DDIRLI}) for the Schlieren operator is given by
\begin{equation}\label{eq:ModLand_Schl}
\ukp := \uk - \alpha \sum_{i=0}^{l-1}\left[R_i^*\bigl(R_i(u_k) (R_i^2(\uk)-\yd_i \bigr))\right]- \lambda_k \tilde{A}^*\bigl(\tilde{A}\uk -\yd \bigr), \qquad k\in\N,
\end{equation} 	  	
where $\alpha$ is a relaxation parameter such that the norm of the operator is less or equal to 1.	
Now, we can create the matrix $A$ for the Equation  \eqref{eq: oper_F_Schlieren} using the procedure described in Section  \ref{sec: num_exp}, see equation \eqref{eq: def_A}. For this purpose, we use the following set of data and parameters:
\begin{enumerate}[(a)]
	\item \label{item: point1S} $180$ different equally distributed angles $\theta$ within the interval $[0,\pi)$; specifically, we consider the set $\Theta=\Big\{0,\frac{\pi}{180},\frac{\pi}{90},\cdots,\frac{179\pi}{180}\Big\}$;
	\item \label{item: point2S} 10 input images of dimensions $110\times 110$ of synthetic pressure functions $u^{(j)}$, $j=1,\cdots,10$,  and the related output data. See Figure \ref{fig: data_sch2} for the dataset used in numerical Test 1S, 2aS and 2bS. See Figure \ref{fig: data_sch1} for the dataset used in numerical Test 3S. Note that the difference between the two datasets is only on the values of the pressure in some pictures.
\end{enumerate}  
We create the matrix $U$ and $Y$, as defined in \eqref{eq: def_A}, utilizing the data in \eqref{item: point2S}. 

{Again, we compare results of \eqref{eq:ModLand_Schl}} with the Landweber scheme, i.e.,  	
\begin{equation}\label{eq:Land_Schl}
\ukp := \uk - \alpha \sum_{i=0}^{l-1}\left[R_i^*\bigl(R_i(u_k) (R_i^2(\uk)-\yd_i \bigr))\right], \qquad k\in\N
\end{equation} 	  	
{and with	the iteratively regularized Landweber iteration (IRLI)
	\begin{equation}\label{eq:IRLI_Land_Schl}
	\ukp := \uk - \alpha \sum_{i=0}^{l-1}\left[R_i^*\bigl(R_i(u_k) (R_i^2(\uk)-\yd_i \bigr))\right]-\beta_k(u_k-u^{(0)}), \qquad k\in\N,
	\end{equation} 	 
	where, according to the parameters rules in \cite[Section 3.2]{KalNeuSch08}, we choose  $\beta_k=\left(\frac{1}{4}\right)^{k+1}$.} 
Here, we specify the hypotheses and the set of parameters which are employed in our numerical experiments for the schemes \eqref{eq:ModLand_Schl}, \eqref{eq:Land_Schl} and \eqref{eq:IRLI_Land_Schl}:
\begin{enumerate}
	\item We choose $\alpha=10^{-8}$ and $\lambda_k=10^{-7}\norm{F(u_k)-\yd}^2_{\infty}$;
	\item As stopping rule we utilize the discrepancy principle \eqref{eq: discrep_princ}, where the choice of $\tau$ will be specified for each test (see tables below), or a maximum number of 400 iterates;
	\item Each vector $\yd_i$, for $i=0,\cdots,179$, of synthetic data is generated by adding a gaussian distributed noise with zero mean and variance $\sigma^2$, equal to 100 or 10, to each vector $y_i$, , for $i=0,\cdots,179$, which represents the exact data;
	\item we define $\delta_S:=\max\left(\norm{\yd_i-y_i}_2\right)$, for $i=0,\cdots,179$;
	\item The initial guess is the constant function $u_0=0.01$;
\end{enumerate} 	
We consider three different numerical experiments: in \textbf{Test 1S} we reconstruct an image which has been utilized to create the matrix $A$. In \textbf{Test 2aS}/\textbf{2bS} we propose the reconstruction of an image containing three {regions} with negative values. In details, in Test 2aS we show the reconstruction adding noise in the data. In Test 2bS we consider the case of partial noisy data. In \textbf{Test 3S}, we consider the case where the pressure to be reconstructed is positive in some regions and negative in others.

{We stress that the Schlieren imaging has an inherent nonuniqueness issue, since the sign of the true solution is undetermined.} {This will be clear in the numerical outcomes of the Landweber iteration.}

As in the previous section, for each test we provide a table containing, {on the left} the values of the main parameters utilized in the generation of the data and, {on the right some results such as the numeber of iterations, the computational time and finally the relative error. See Tables \ref{tab: table1S}, \ref{tab: table2aS}, \ref{tab: table2bS} and \ref{tab: table3aS}.}

\ \\ \ \\
\textbf{Test 1S.} (Figure \ref{fig: Stest1}). \textit{Target: reconstruct one of the image which has been utilized to create the matrix $A$}. See Figure \ref{fig: Stest1} for the ``True Image''. 
All the synthetic data $\yd_i$, for $i=0,\cdots,179$, are obtained by the real data $y_i$, for $i=0,\cdots,179$,  adding a gaussian distributed noise of zero mean and variance $\sigma^2=100$. The noise in the synthetic data is shown in Figure \ref{fig: Stest1} b) in aggregate form. 
For the discrepancy principle \eqref{eq: discrep_princ} we choose $\tau=2$. See Table \ref{tab: table1S}.
We observe, from Figure \ref{fig: Stest1} and Figure \ref{fig: Stest1_2}, how the presence of $A$ (and the fact that the information of the "true image" are included in $A$) ``helps'' the iterates \eqref{eq:ModLand_Schl} in a better reconstruction of the value of the pressure than the Landweber iterates \eqref{eq:Land_Schl}. However, in Figure \ref{fig: Stest1} c) we can observe some evident artifacts which come from the other a-priori information contained in $A$.

\textbf{Test 2aS/2bS.} (Figures \ref{fig: Stest2a} and \ref{fig: Stest2b}). \textit{Target: reconstruct one of the image which has been utilized to create the matrix $A$, with regions with only negative values}. See Figure \ref{fig: Stest2a} for the ``True Image''. 
All the synthetic data $\yd_i$, for $i=0,\cdots,179$, are obtained by the real data $y_i$, for $i=0,\cdots,179$,  adding a gaussian distributed noise of zero mean and variance $\sigma^2=100$ in Test 2aS, and $\sigma^2=10$ in Test 2bS. The noise in the synthetic data for the two tests is shown in Figure \ref{fig: Stest2a} b) and Figure \ref{fig: Stest2b} in aggregate form. 
For the discrepancy principle \eqref{eq: discrep_princ} we choose $\tau=3$ in Test 2aS and $\tau=6$ in Test 2bS. See Table \ref{tab: table2aS} and Table \ref{tab: table2bS}.
Again, in Test 2aS we notice how the presence of $A$ ``helps'' the iterates \eqref{eq:ModLand_Schl} to get a better reconstruction of all the negative values of the pressure than the Landweber iterates \eqref{eq:Land_Schl}, see Figure \ref{fig: Stest2a} and Figure \ref{fig: Stest2a_2}. However, we also observe some evident artifacts which come from of the other a-priori in $A$.
In Test 2bS, we consider the case of limited data, which corresponds in choosing the data only of some directions in $\Theta$ (see (\ref{item: point1S}) for its definition) and set to zero the values of the other vectors related to the angles not chosen, see Figure \ref{fig: test2b} b) for the directions of the available data. Comparing the results of the two schemes, \eqref{eq:ModLand_Schl} and \eqref{eq:Land_Schl}, we can observe a good reconstruction for the iteratively regularized Landweber iteration.

\textbf{Test 3S.} (Figure \ref{fig: Stest3}). \textit{Target: reconstruct one of the image which has been used to create the matrix $A$, with regions with positive or negative values}. See Figure \ref{fig: Stest3} for the true image. 
All the synthetic data $\yd_i$, for $i=0,\cdots,179$, are obtained by the real data $y_i$, for $i=0,\cdots,179$, adding a gaussian distributed noise of zero mean ad variance $\sigma^2=100$. For the discrepancy principle \eqref{eq: discrep_princ} we choose $\tau=3$. See Table \ref{tab: table3aS}.

We stress that Figures \ref{fig: Stest1_2} c), \ref{fig: Stest2a_2} c), \ref{fig: Stest2b_2} c), \ref{fig: Stest3_2} c) show the ratios between the residuals of the data driven model and the model driven approach. A value greater than $1$ of the ratio implies that the data driven approach has a significant influence on the iteration process.

{As in the previous section, Figures \ref{fig: Stest1_2} C), \ref{fig: Stest2a_2} C), \ref{fig: Stest2b_2} C) and \ref{fig: Stest3_2} C) show the ratios between the residuals of the data driven model, $\norm{Au_k-\yd}$, and the model driven approach, $\norm{F(u_k)-\yd}$. {A value greater than $1$ of the ratio implies that the data driven approach has a significant influence on the iteration process. Figures \ref{fig: Stest1_2} D), \ref{fig: Stest2a_2} D), \ref{fig: Stest2b_2} D) and \ref{fig: Stest3_2} D) enlight how fast the product $\|Ru_k-y^{\delta}\|_2\  \|Au_k-y^{\delta}\|_2$ is going to zero.}}

\begin{figure}[!h]
	\centering
		\includegraphics[scale=0.7]{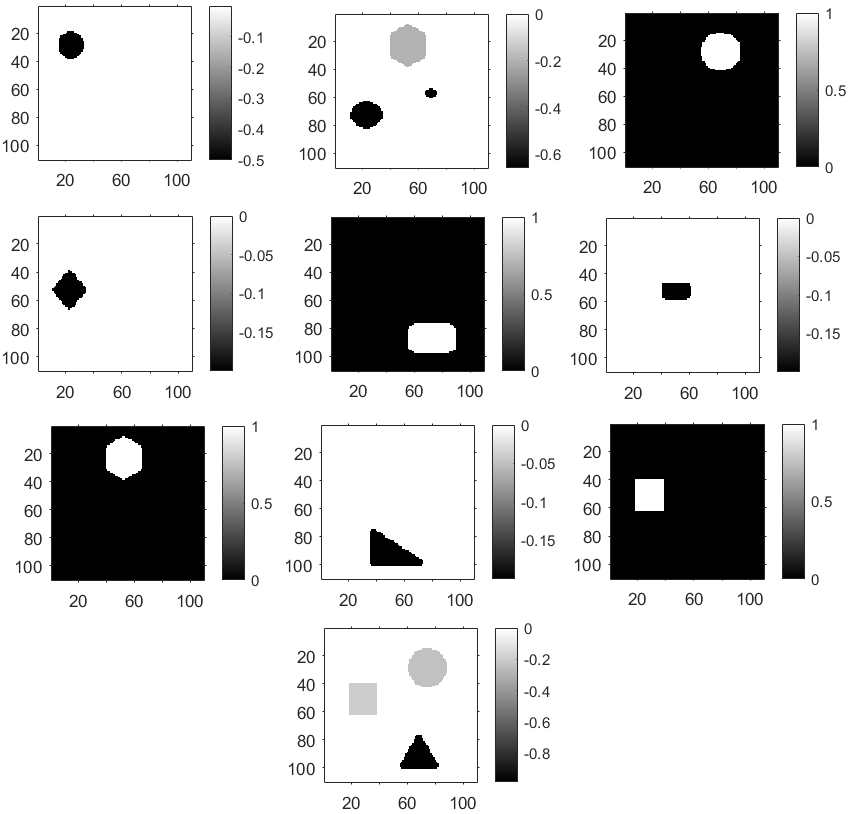}
	\caption{Dataset utilized to create the matrix $A$ in Tests 1S, 2aS and 2bS. }\label{fig: data_sch2}
\end{figure}
\newpage

\begin{figure}[!h]
	\centering
	\includegraphics[scale=0.7]{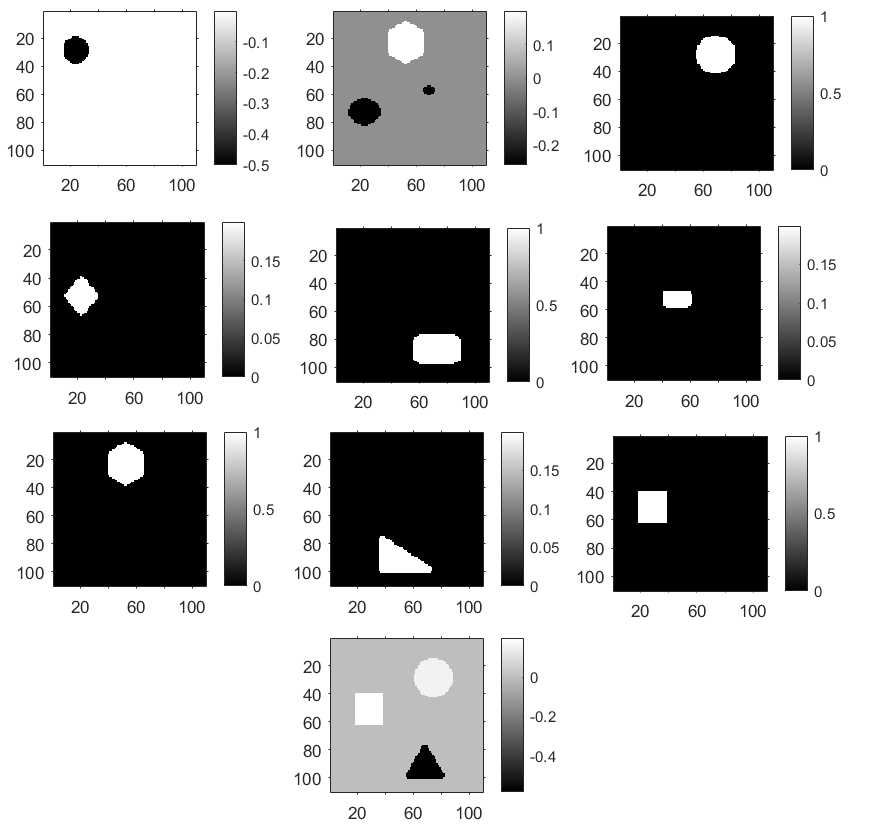}
	\caption{Dataset utilized to create the matrix $A$ in the following Test 3S.}\label{fig: data_sch1}	
\end{figure}
\newpage 

{\begin{table}[!h]\caption{Test 1S. Left part: Parameters used in the test. Right part: some of the results of the test.}\label{tab: table1S}
		\begin{center}
			{\renewcommand{\arraystretch}{1.2}
				\begin{tabular}{|l|c|c|c||c|c|c|}
					\cline{1-7} & & & & & & \vspace{-0.2cm} \\
					Method & $\sigma^2$-noise& $\delta$ & $\tau$ & Iterations & Comp. Time (s) & $\frac{\norm{u_{\textrm{true}} - u_{\textrm{rec}}}_2}{\norm{u_{\textrm{true}}}_2}$\\ \hline  
					{DDIRLI} &  &  &  & 73 & 34,9193152
					& 0,224935913
					\\ \cline{1-1}\cline{5-7}
					{IRLI} & 100  & 141,519952
					& 2  & 128 & 7,5506274
					& 0,233332376
					\\ \cline{1-1}\cline{5-7} 
					LANDWEBER &  & &  & 190 &11,3537075
					& 0,247679701
					\\ \hline
				\end{tabular}
			}
		\end{center}   
	\end{table} 
}
\vspace{2cm}
\begin{figure}[!h]
	\centering
	\subfigure{
		\includegraphics[scale=0.35]{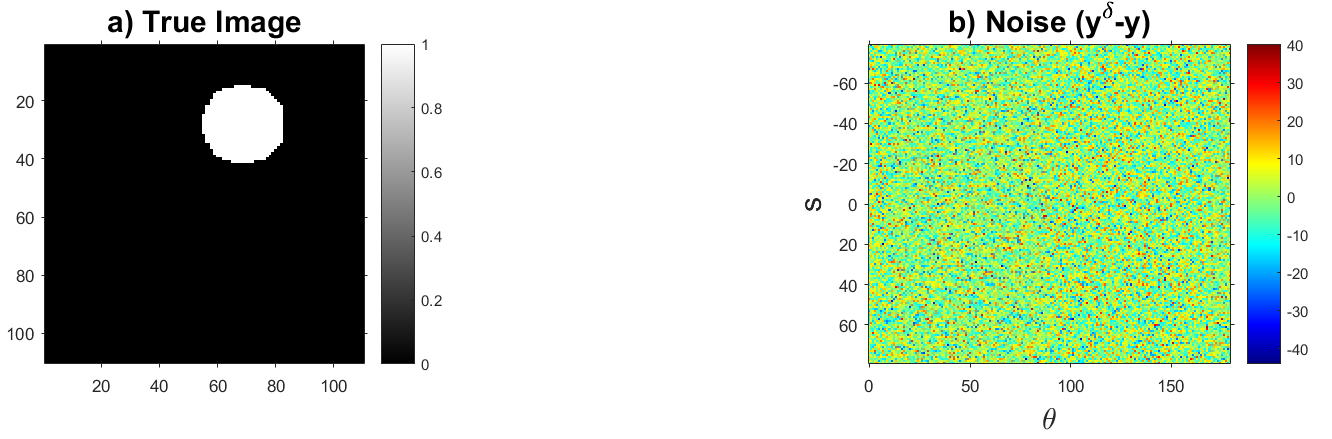}}
	\subfigure{\includegraphics[scale=0.37]{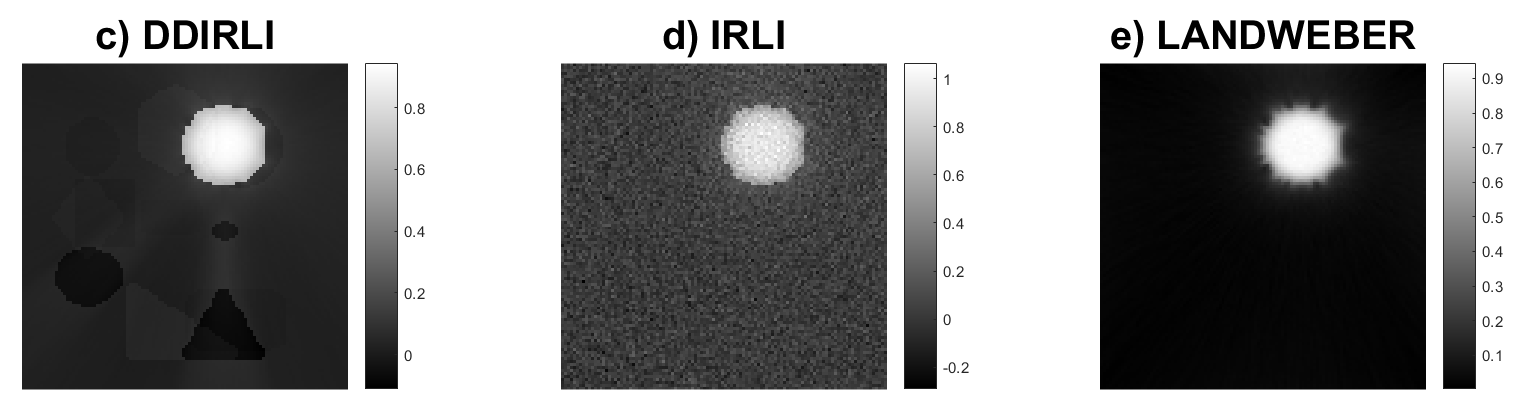}}
	\caption{Test 1S. Reconstruction of an image which has been used to build the matrix $A$, only with a region with a positive value. a) Image to be reconstructed; b) Plot of the noise added in the synthetic data, which are aggregated in a matrix form; c)-e) Reconstructions by the different methods.}\label{fig: Stest1}
\end{figure}
\newpage
\begin{figure}[!h]
	\centering
	\vspace{1cm}
	\includegraphics[scale=0.38]{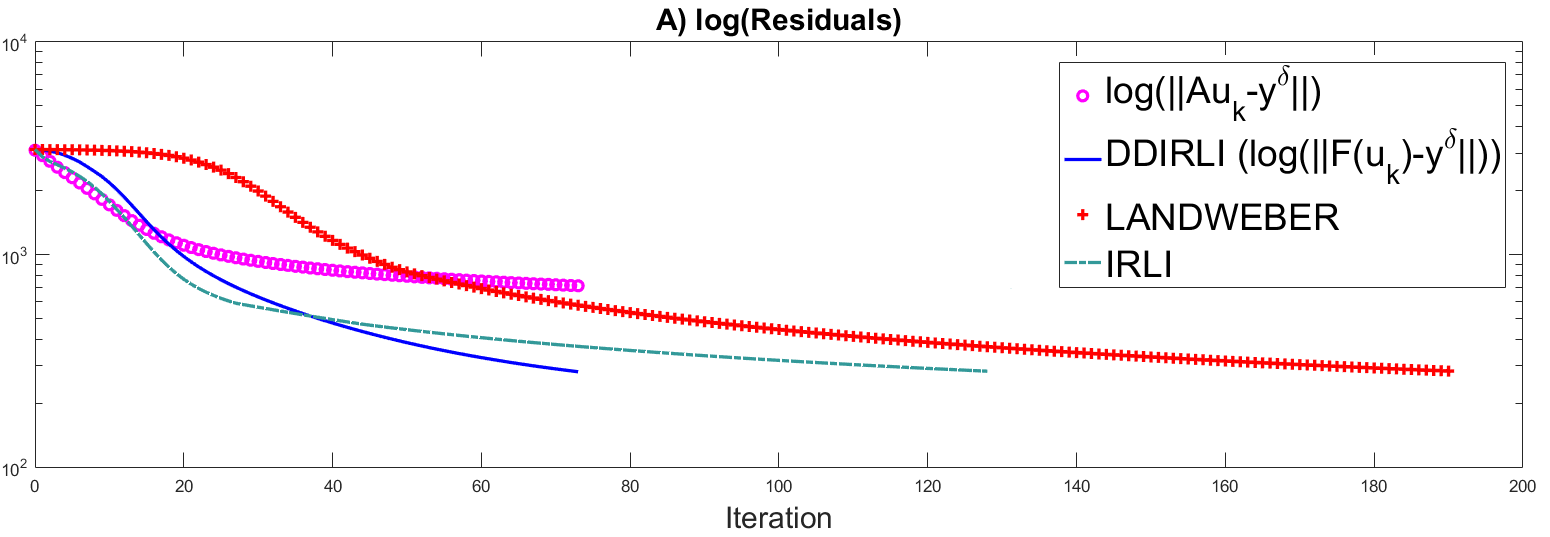}\vspace{1cm}
	\includegraphics[scale=0.38]{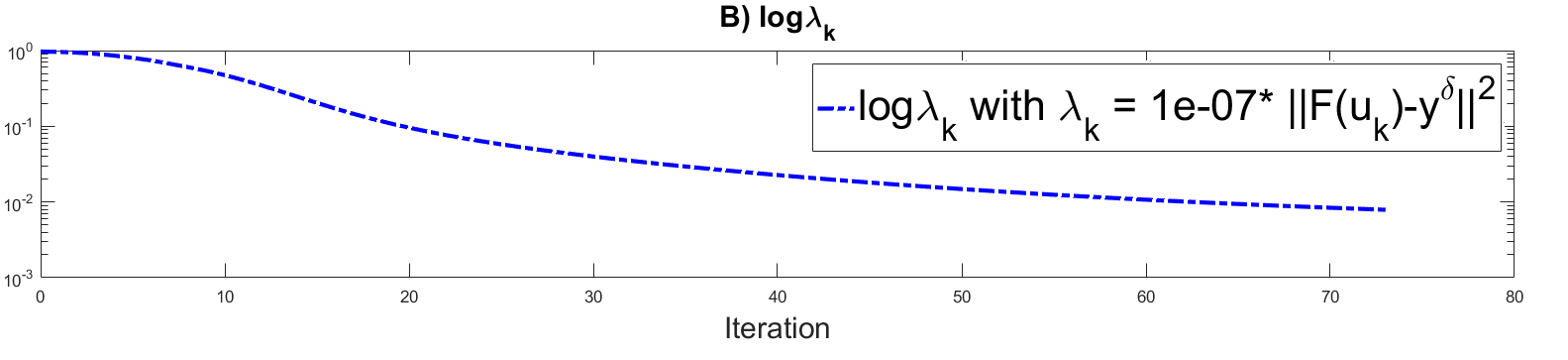}\vspace{1cm}
	\includegraphics[scale=0.38]{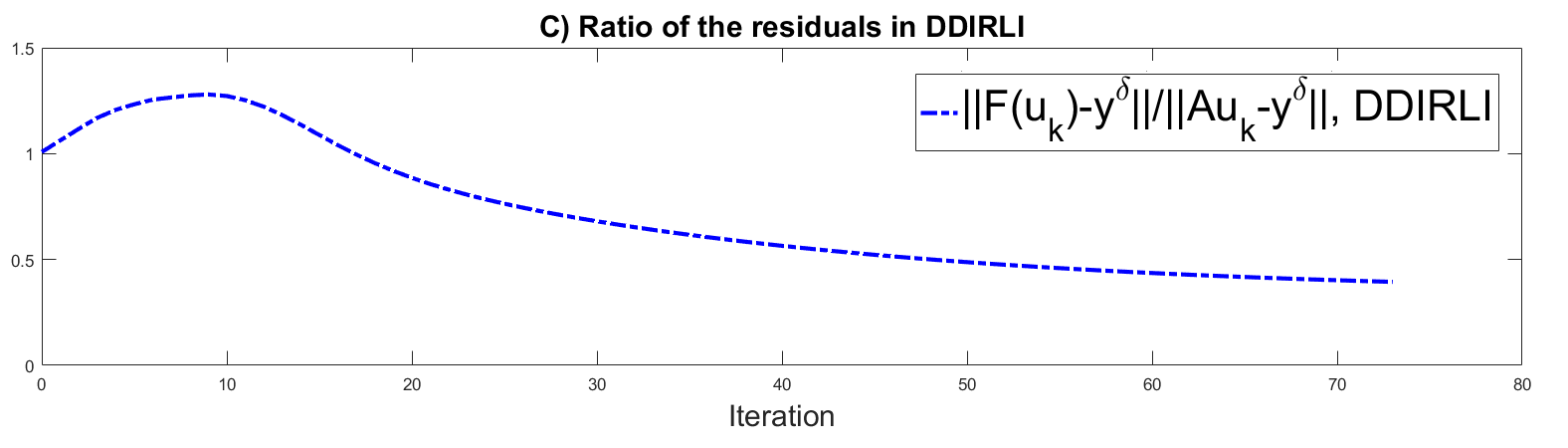}\vspace{1cm}
	\includegraphics[scale=0.38]{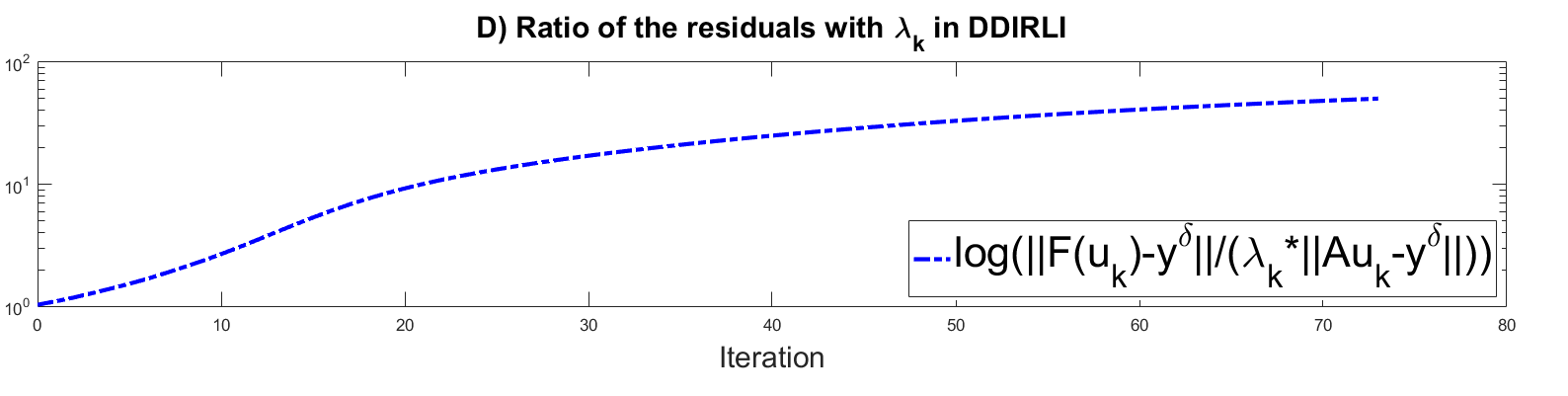}
	\caption{Test 1S. A) Residual at each step of schemes \eqref{eq:ModLand_Schl}, \eqref{eq:Land_Schl} and \eqref{eq:IRLI_Land_Schl}; B) values of $\lambda_{k}$ in \eqref{eq:ModLand_Schl} scheme at each iteration; C) ratio of the residuals $\norm{F(u_k)-\yd}_{\infty}/\norm{Au_k-\yd}_{\infty}$ at each iteration; D) plot of the ratio $\norm{F(u_k)-\yd}_{\infty}/(\lambda_{k} \norm{Au_k-\yd}_{\infty})$.}\label{fig: Stest1_2}
\end{figure}

\newpage
{\begin{table}[!h]\caption{Test 2aS. Left part: Parameters used in the test. Right part: some of the results of the test.}\label{tab: table2aS}
		\begin{center}
			{\renewcommand{\arraystretch}{1.2}
				\begin{tabular}{|l|c|c|c||c|c|c|}
					\cline{1-7} & & & & & & \vspace{-0.2cm} \\
					Method & $\sigma^2$-noise& $\delta$ & $\tau$ & Iterations & Comp. Time (s) & $\frac{\norm{u_{\textrm{true}} - u_{\textrm{rec}}}_2}{\norm{u_{\textrm{true}}}_2}$\\ \hline  
					{DDIRLI} &  &  &  & 42 & 20,0855993 & 0,18951741
					\\ \cline{1-1}\cline{5-7}
					{IRLI} & 100  & 144,6648313
					& 3  & 66 & 4,3175441
					& 0,30307348
					\\ \cline{1-1}\cline{5-7} 
					LANDWEBER &  & &  & 156 &9,0376626
					& 1,802786044
					\\ \hline
				\end{tabular}
			}
		\end{center}   
	\end{table} 
}
\vspace{2cm}
\begin{figure}[!h]
	\centering
	\subfigure{
		\includegraphics[scale=0.35]{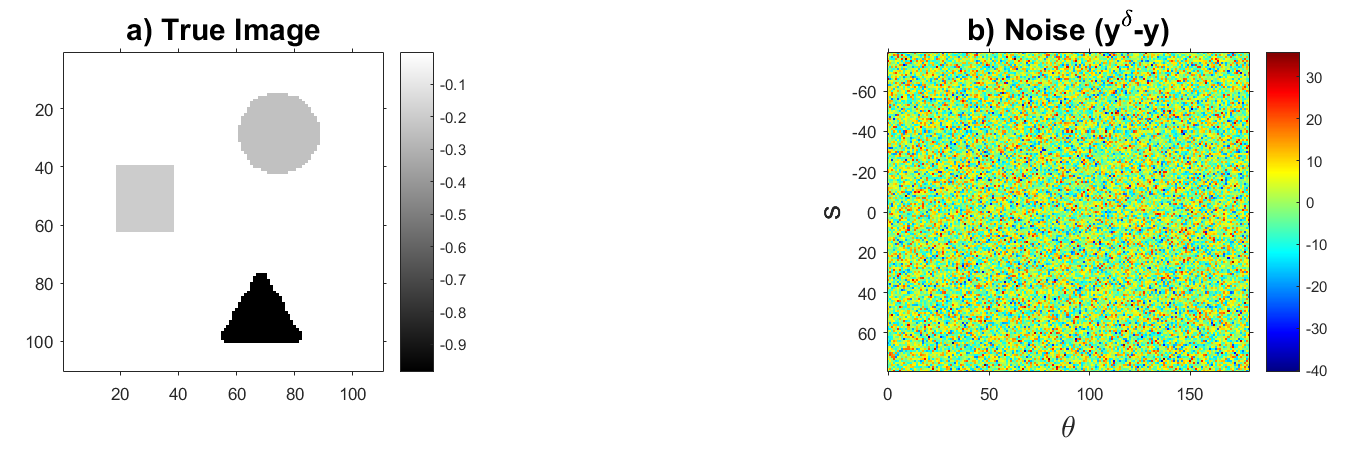}}
	\subfigure{\includegraphics[scale=0.37]{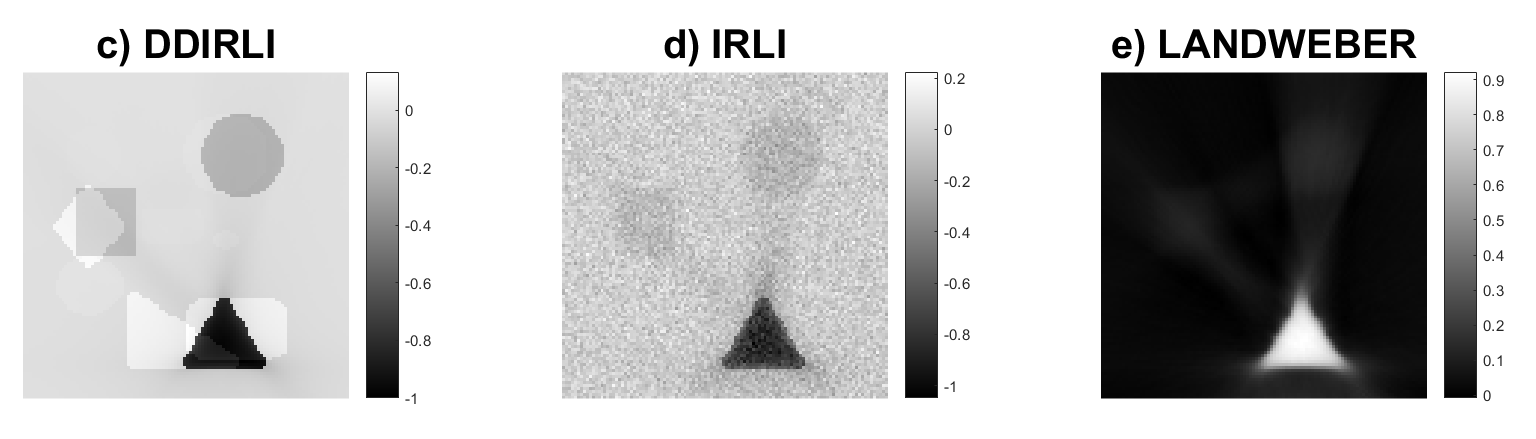}}
	\caption{Test 2aS. Reconstruction of an image which has been used to build the matrix $A$, only with regions with negative values. a) Image to be reconstructed; b) Plot of the noise added in the synthetic data, which are aggregated in a matrix form; c)-e) Reconstructions by the different methods.}\label{fig: Stest2a}
\end{figure}
\newpage
\begin{figure}[!h]
	\centering
	\vspace{1cm}
	\includegraphics[scale=0.38]{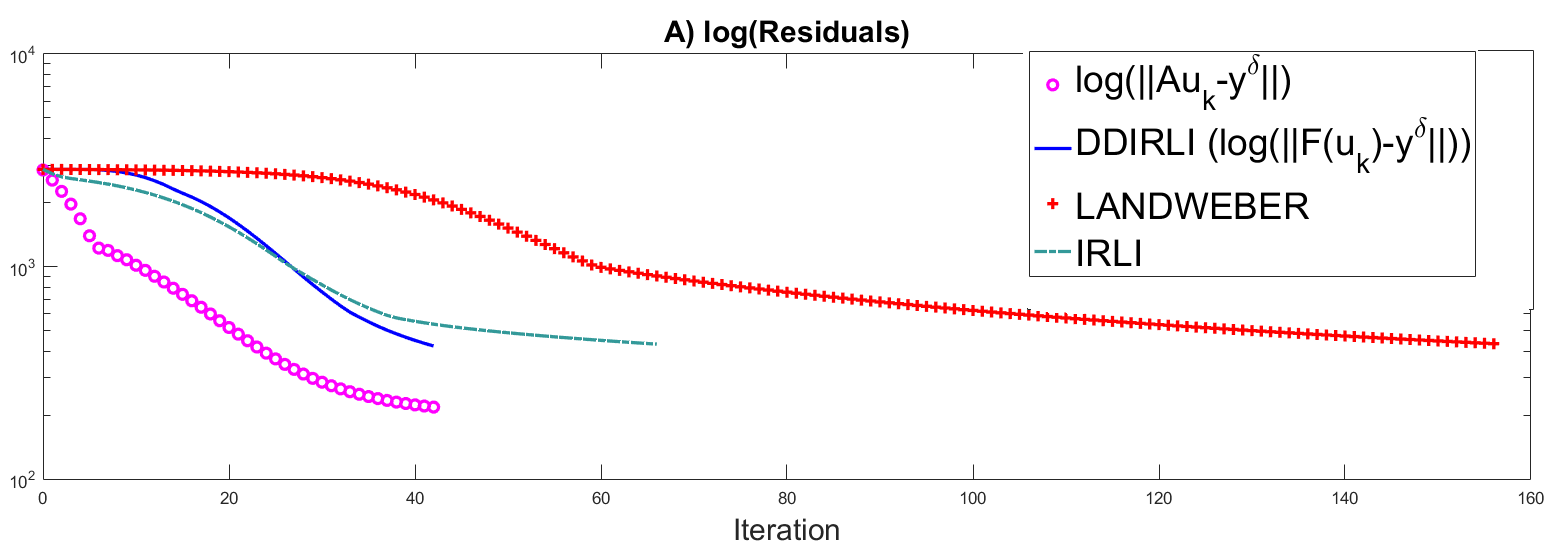}\vspace{1cm}
	\includegraphics[scale=0.38]{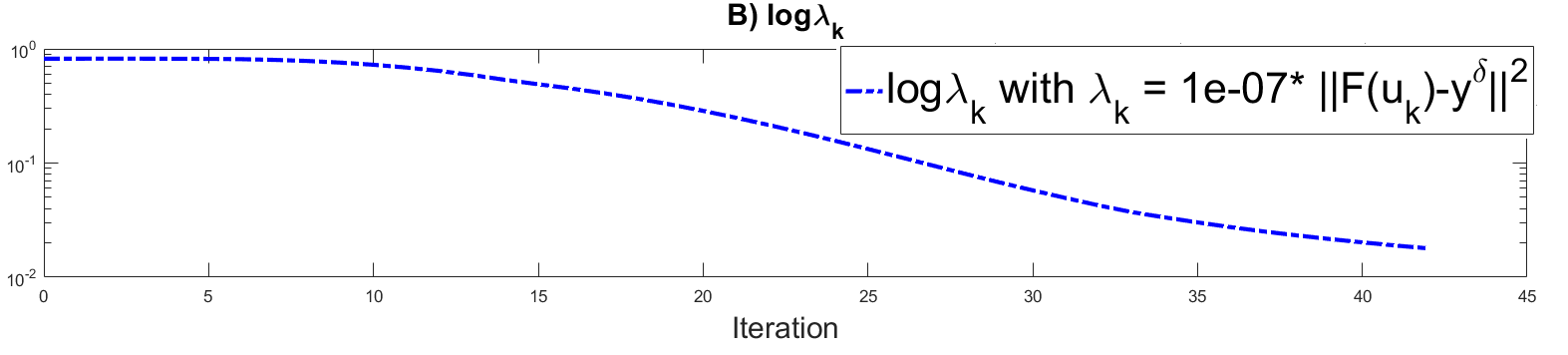}\vspace{1cm}
	\includegraphics[scale=0.38]{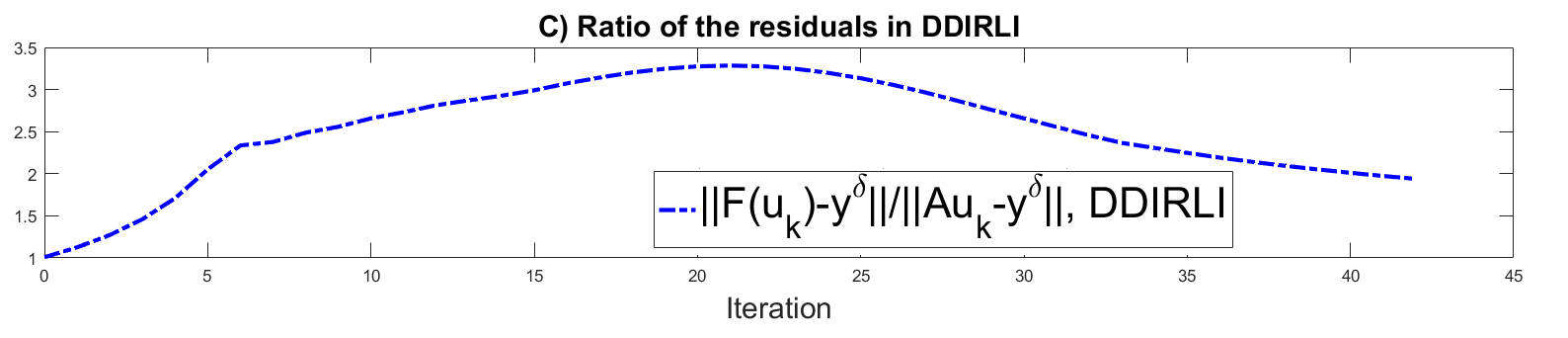}\vspace{1cm}
	\includegraphics[scale=0.38]{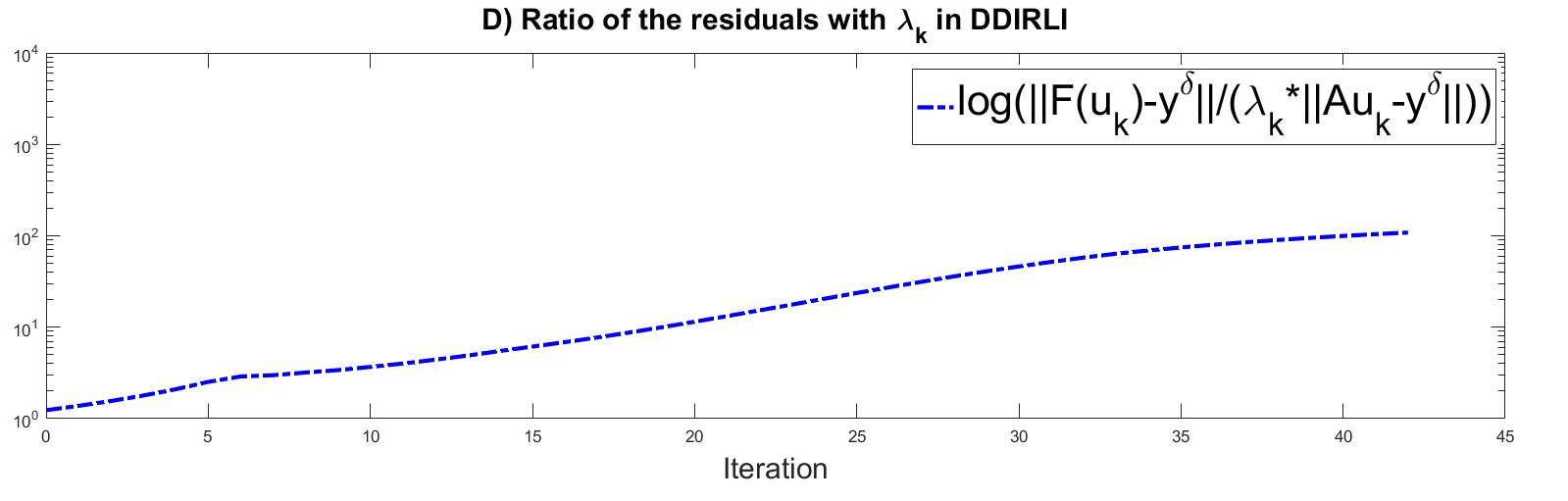}
	\caption{Test 2aS. A) Residual at each step of schemes \eqref{eq:ModLand_Schl}, \eqref{eq:Land_Schl} and \eqref{eq:IRLI_Land_Schl}; B) values of $\lambda_{k}$ in \eqref{eq:ModLand_Schl} scheme at each iteration; C) ratio of the residuals $\norm{F(u_k)-\yd}_{\infty}/\norm{Au_k-\yd}_{\infty}$ at each iteration; D) plot of the ratio $\norm{F(u_k)-\yd}_{\infty}/(\lambda_{k} \norm{Au_k-\yd}_{\infty})$.}\label{fig: Stest2a_2}
\end{figure}

\newpage
{\begin{table}[!h]\caption{Test 2bS. Left part: Parameters used in the test. Right part: some of the results of the test.}\label{tab: table2bS}
		\begin{center}
			{\renewcommand{\arraystretch}{1.2}
				\begin{tabular}{|l|c|c|c||c|c|c|}
					\cline{1-7} & & & & & & \vspace{-0.2cm} \\
					Method & $\sigma^2$-noise& $\delta$ & $\tau$ & Iterations & Comp. Time (s) & $\frac{\norm{u_{\textrm{true}} - u_{\textrm{rec}}}_2}{\norm{u_{\textrm{true}}}_2}$\\ \hline  
					{DDIRLI} &  &  &  & 84 & 37,8148317 & 0,144113319
					\\ \cline{1-1}\cline{5-7}
					{IRLI} & 10  & 44,46384766
					& 6  & 399 & 24,3457831
					& 0,262173351
					\\ \cline{1-1}\cline{5-7} 
					LANDWEBER &  & &  & 399 &24,076839
					& 1,77556498
					\\ \hline
				\end{tabular}
			}
		\end{center}   
	\end{table} 
}
\vspace{2cm}
\begin{figure}[!h]
	\centering
	\subfigure{
		\includegraphics[scale=0.35]{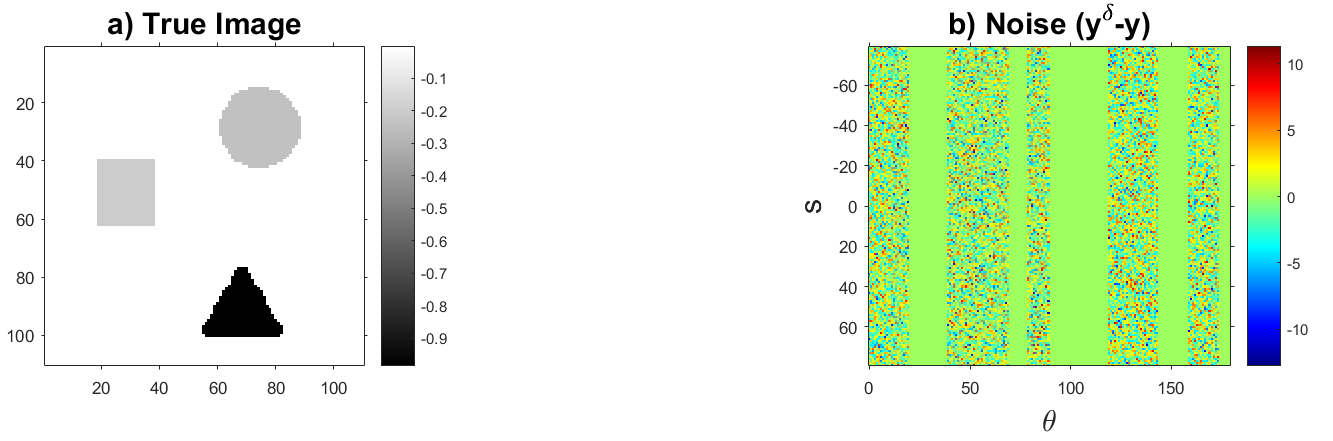}}
	\subfigure{\includegraphics[scale=0.37]{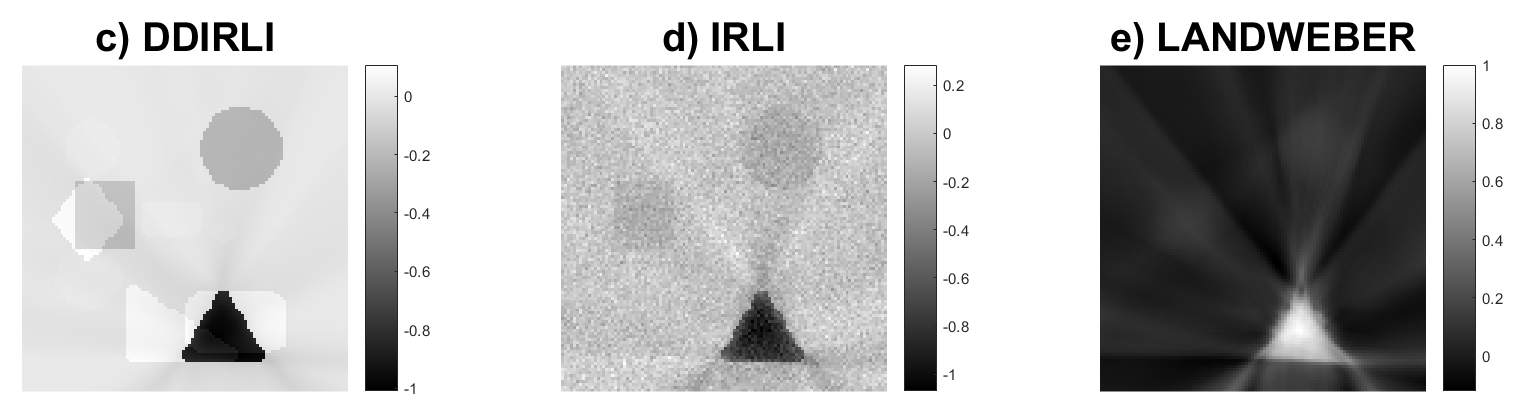}}
	\caption{Test 2bS. Reconstruction of an image which has been used to build the matrix $A$, from partial data and only with regions with negative values. a) Image to be reconstructed; b) Plot of the noise added in the synthetic data, which are aggregated in a matrix form; c)-e) Reconstructions by the different methods.}\label{fig: Stest2b}
\end{figure}
\newpage
\begin{figure}[!h]
	\centering
	\vspace{1cm}
	\includegraphics[scale=0.38]{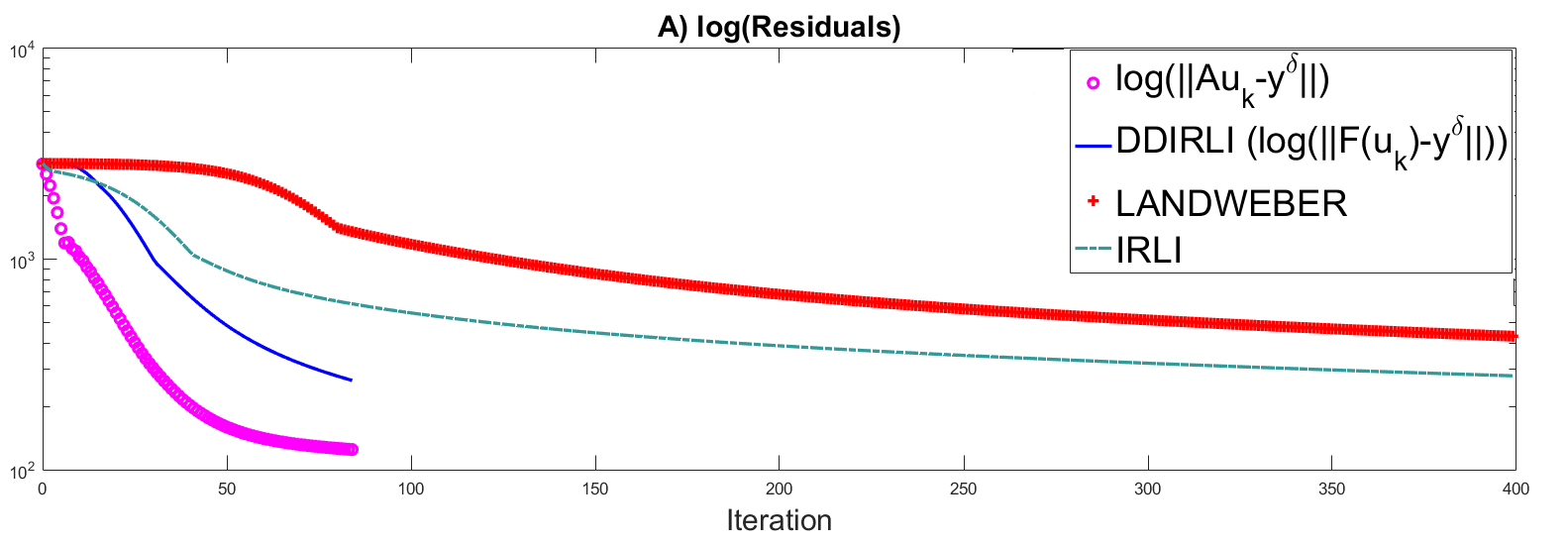}\vspace{1cm}
	\includegraphics[scale=0.38]{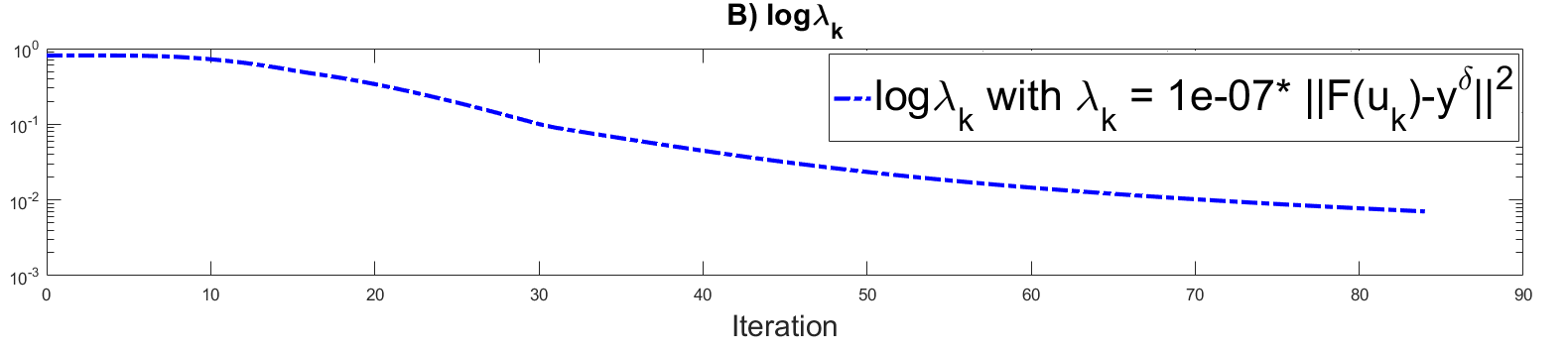}\vspace{1cm}
	\includegraphics[scale=0.38]{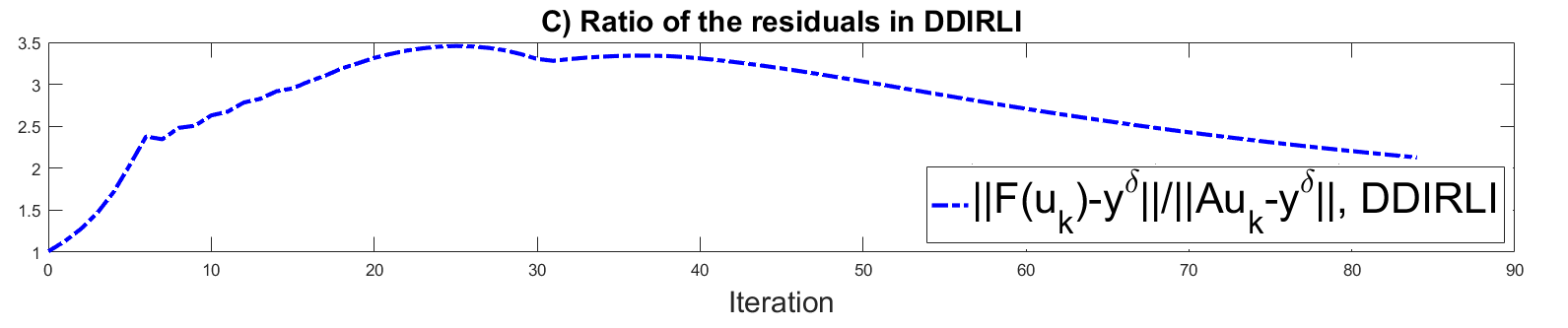}\vspace{1cm}
	\includegraphics[scale=0.38]{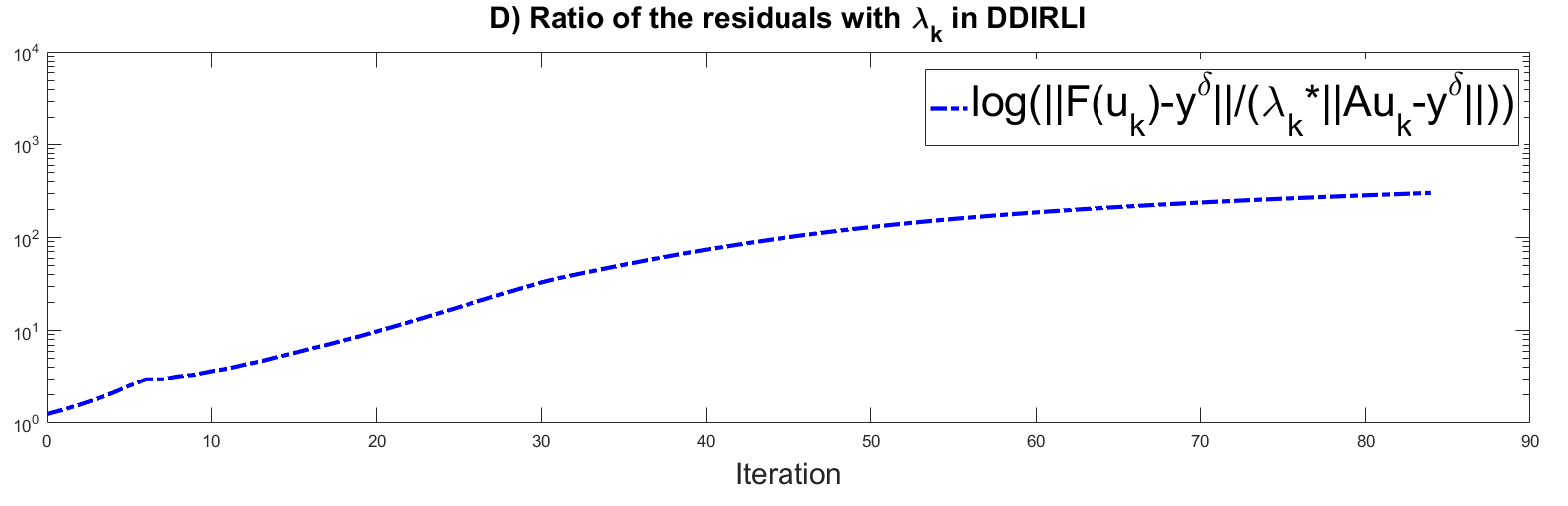}
	\caption{Test 2bS. A) Residual at each step of schemes \eqref{eq:ModLand_Schl}, \eqref{eq:Land_Schl} and \eqref{eq:IRLI_Land_Schl}; B) values of $\lambda_{k}$ in \eqref{eq:ModLand_Schl} scheme at each iteration; C) ratio of the residuals $\norm{F(u_k)-\yd}_{\infty}/\norm{Au_k-\yd}_{\infty}$ at each iteration; D) plot of the ratio $\norm{F(u_k)-\yd}_{\infty}/(\lambda_{k} \norm{Au_k-\yd}_{\infty})$.}\label{fig: Stest2b_2}
\end{figure}

\newpage
{\begin{table}[!h]\caption{Test 3S. Left part: Parameters used in the test. Right part: some of the results of the test.}\label{tab: table3aS}
		\begin{center}
			{\renewcommand{\arraystretch}{1.2}
				\begin{tabular}{|l|c|c|c||c|c|c|}
					\cline{1-7} & & & & & & \vspace{-0.2cm} \\
					Method & $\sigma^2$-noise& $\delta$ & $\tau$ & Iterations & Comp. Time (s) & $\frac{\norm{u_{\textrm{true}} - u_{\textrm{rec}}}_2}{\norm{u_{\textrm{true}}}_2}$\\ \hline  
					{DDIRLI} &  &  &  & 213 & 94,3754666 & 0,254685973
					\\ \cline{1-1}\cline{5-7}
					{IRLI} & 100  & 145,8908582
					& 3  & 83 & 4,7161903
					& 0,475122924
					\\ \cline{1-1}\cline{5-7} 
					LANDWEBER &  & &  & 151 &8,5576423
					& 1,372143804
					\\ \hline
				\end{tabular}
			}
		\end{center}   
	\end{table} 
}
\vspace{2cm}
\begin{figure}[!h]
	\centering
	\subfigure{
		\includegraphics[scale=0.35]{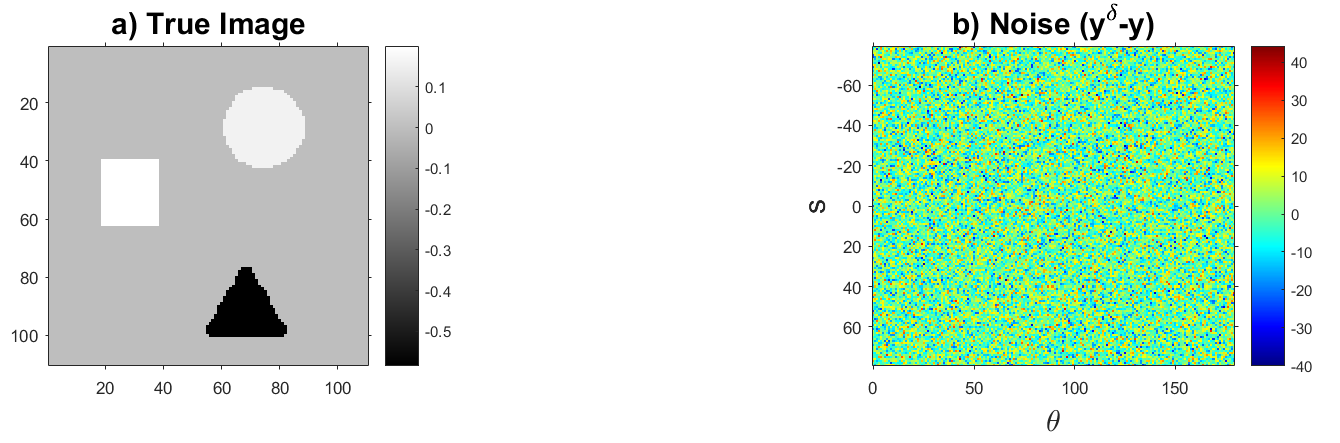}}
	\subfigure{\includegraphics[scale=0.37]{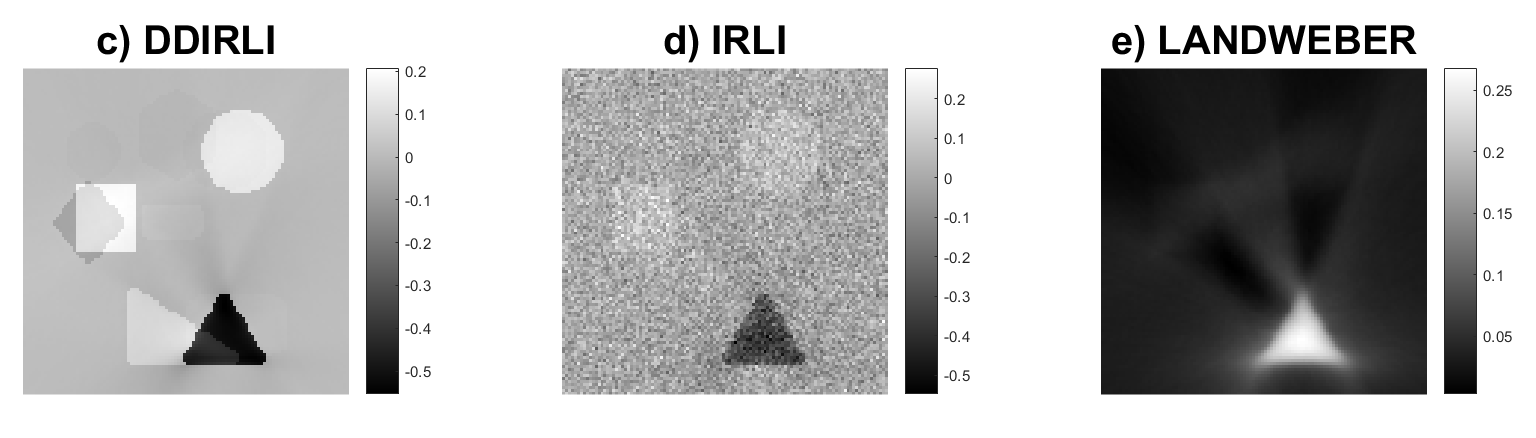}}
	\caption{Test 3S. Reconstruction of an image which has been used to build the matrix $A$, with regions with positive and negative values. a) Image to be reconstructed; b) Plot of the noise added in the synthetic data, which are aggregated in a matrix form; c)-e) Reconstructions by the different methods.}\label{fig: Stest3}
\end{figure}
\newpage
\begin{figure}[!h]
	\centering
	\vspace{1cm}
	\includegraphics[scale=0.38]{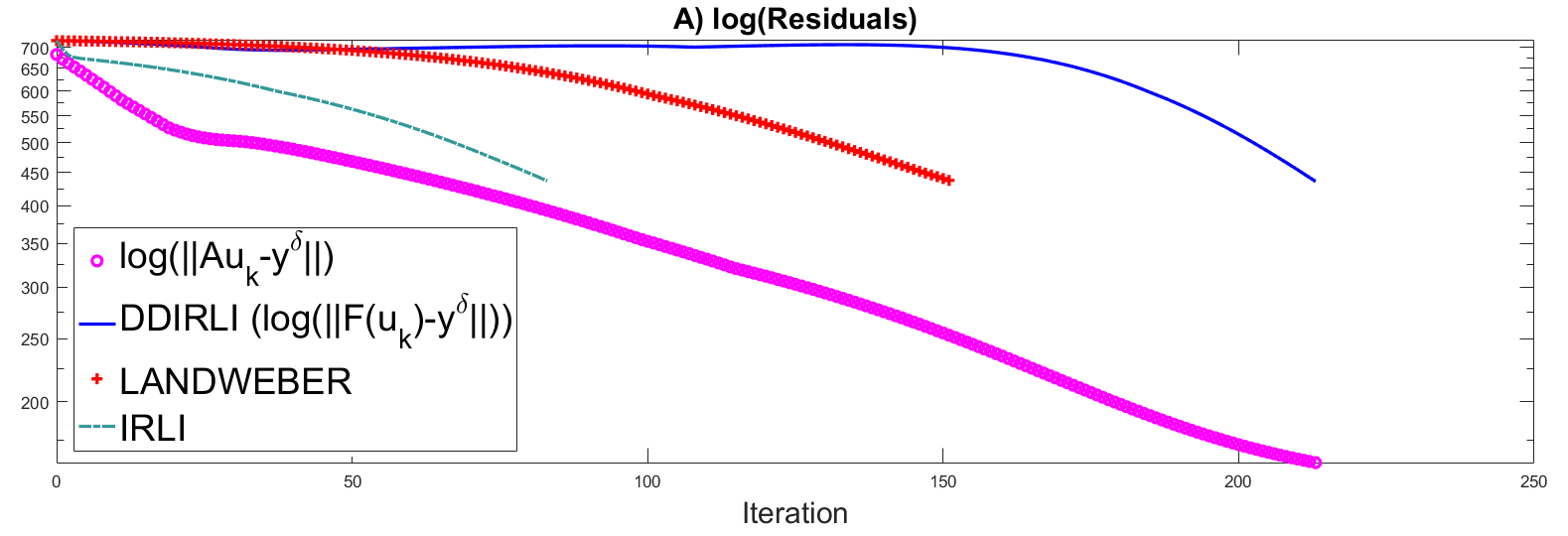}\vspace{1cm}
	\includegraphics[scale=0.38]{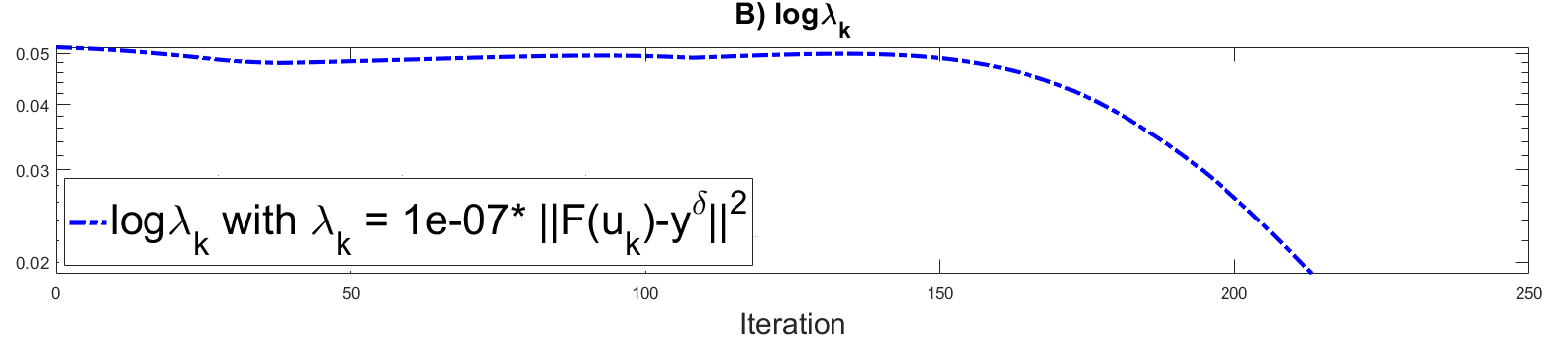}\vspace{1cm}
	\includegraphics[scale=0.38]{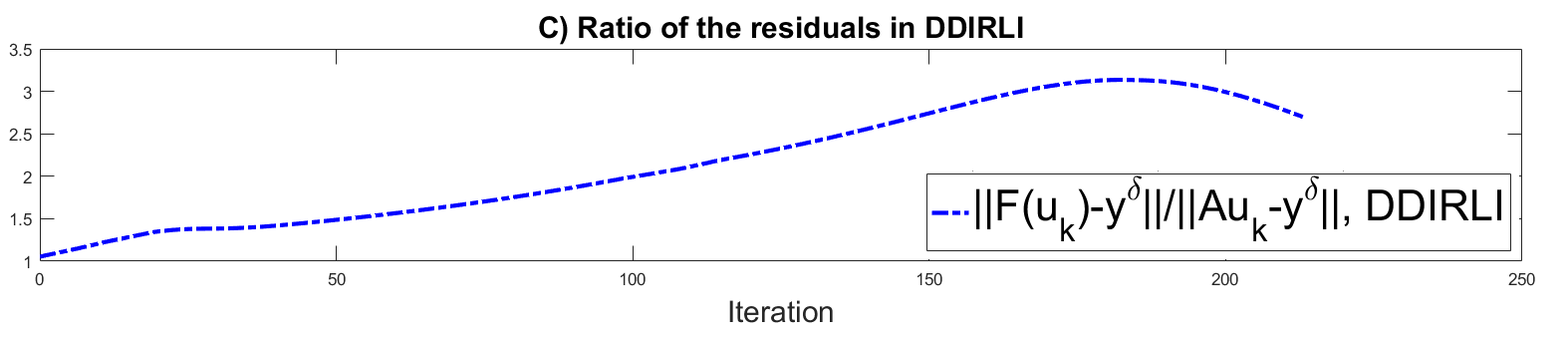}\vspace{1cm}
	\includegraphics[scale=0.38]{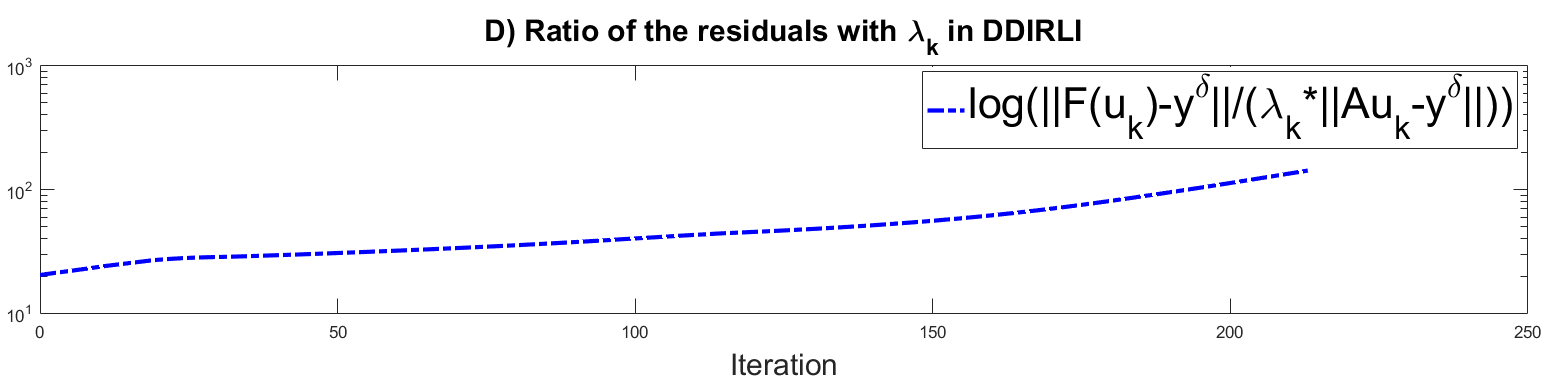}
	\caption{Test 3S. A) Residual at each step of schemes \eqref{eq:ModLand_Schl}, \eqref{eq:Land_Schl} and \eqref{eq:IRLI_Land_Schl}; B) values of $\lambda_{k}$ in \eqref{eq:ModLand_Schl} scheme at each iteration; C) ratio of the residuals $\norm{F(u_k)-\yd}_{\infty}/\norm{Au_k-\yd}_{\infty}$ at each iteration; D) plot of the ratio $\norm{F(u_k)-\yd}_{\infty}/(\lambda_{k} \norm{Au_k-\yd}_{\infty})$.}\label{fig: Stest3_2}
\end{figure}

\section{Conclusion} \label{sec:conclusion}
In this paper we introduced a new variant of the iteratively regularized Landweber iteration, where additional regularization 
is enforced from training data. The data driven regularization term is determined by finding a linear system from the 
input-output relation of some training data. 

This strategy is uneconomical in terms of the amount of usable training data, stability and matrix storage capacity, 
and this asks for more advanced methods of learning (see for example \cite{AdlOek17,GoodBenCour16}) instead of a linear black-box strategy. For some literature on deep learning in inverse problems,  see for example \cite{AdlOek17,AntHalSch18,BubKutLas19,DrPerHal18} and references therein. {We believe that the issue, of the use of a large amount of storage
	capacity for the creation of the matrix A, could be overcome by using other
	methods of numerical linear algebra which are equally valid and
	precise compared to the singular value decomposition. We leave this issue for future works.}
Looking closer to our proposed algorithm it averages data driven regularization and physical model terms in such a way 
that in the beginning of the iteration the data driven term is dominant, while during the iteration the physical model 
takes over the leading role. By this sense we make sure that for data with little noise we are always close to the physical 
solution. Our numerical experiments show that the iteration always reconstructs an image where the residuum is close to the 
measured data. Moreover, if the desired solution is well approximated by the training data, the 
reconstruction with the proposed algorithm always works better than the standard Landweber iteration and {the iteratively regularized Landweber iteration}. 
This, in particular, applies to severely ill--posed problems of limited angle tomography. A significant challenge are 
nonlinear inverse problems, such as the problem of Schlieren tomography. The inherent non-uniqueness of the solution 
of the Schlieren problem gives more emphasis on selecting the right a--priori choice, which our flexible 
regularization term can handle more efficiently than existing theory. {In the nonlinear case, one can observe various artifacts
	in the reconstructions due to the linearity of $A$, which are some of the images contained in $A$. Therefore, the use of a nonlinear operator $A$ should be investigated. We leave this issue for future works.}

\section*{Acknowledgments}
A. Aspri thanks Y. Korolev for suggesting the dataset MNIST, used in the first part of the paper. O. Scherzer is supported by the FWF via the projects I3661-N27 (Novel Error Measures and Source Conditions of Regularization Methods for Inverse Problems) and via SFB F68, project F6807-N36 (Tomography with Uncertainties).

\appendix



\end{document}